\newif\ifpersonal
\theoremstyle{plain}
\newtheorem{thm}{Theorem}[section]
\newtheorem{lem}[thm]{Lemma}
\newtheorem{prop}[thm]{Proposition}
\newtheorem{cor}[thm]{Corollary}
\newtheorem{conj}[thm]{Conjecture}
\theoremstyle{definition}
\newtheorem{defin}[thm]{Definition}
\theoremstyle{remark}
\newtheorem{rem}[thm]{Remark}
\numberwithin{equation}{section}
\newcommand*{\personal}[1]{\textcolor{blue}{(Personal: #1)}}
\newcommand*{\todo}[1]{\textcolor{red}{(Todo: #1)}}
\newcommand*{\personal}[1]{\ignorespaces}
\newcommand*{\todo}[1]{\ignorespaces}
\newcommand{\R}{\mathbb R}
\newcommand{\Z}{\mathbb Z}
\newcommand{\fA}{\mathfrak A}
\newcommand{\fC}{\mathfrak C}
\newcommand{\fS}{\mathfrak S}
\newcommand{\fT}{\mathfrak T}
\newcommand{\fU}{\mathfrak U}
\newcommand{\fX}{\mathfrak X}
\newcommand{\fY}{\mathfrak Y}
\newcommand{\ff}{\mathfrak f}
\newcommand{\fs}{\mathfrak s}
\newcommand{\cA}{\mathcal A}
\newcommand{\cC}{\mathcal C}
\newcommand{\cD}{\mathcal D}
\newcommand{\cI}{\mathcal I}
\newcommand{\cM}{\mathcal M}
\newcommand{\cS}{\mathcal S}
\newcommand{\cX}{\mathcal X}
\newcommand{\bP}{\mathbf P}
\newcommand{\hL}{\widehat L}
\newcommand{\tD}{\widetilde D}
\newcommand{\IX}{I_\fX}
\newcommand{\IY}{I_\fY}
\newcommand{\SX}{S_\fX}
\newcommand{\fXe}{\fX_\eta}
\newcommand{\fXs}{\fX_s}
\newcommand{\fYe}{\fY_\eta}
\newcommand{\fYs}{\fY_s}
\newcommand{\Sch}{\mathrm{Sch}}
\newcommand{\FSch}{\mathrm{FSch}}
\newcommand{\An}{\mathrm{An}}
\newcommand{\bcMgn}{\overline{\mathcal M}_{g,n}}
\newcommand{\bcMgnprime}{\overline{\mathcal M}_{g,n'}}
\newcommand{\bcMgnijprime}{\overline{\mathcal M}_{g_{ij},n'_{ij}}}
\newcommand{\Simp}{\mathit{Simp}}
\newcommand{\kc}{k^\circ}
\newcommand{\an}{^\mathrm{an}}
\newcommand{\alg}{^\mathrm{alg}}
\newcommand{\mult}{\mathit{mult}}
\newcommand{\inv}{^{-1}}
\newcommand{\gn}{$n$-pointed genus $g$ }
\newcommand{\gnprime}{$n'$-pointed genus $g$ }
\newcommand{\kanal}{$k$-analytic\xspace}
\newcommand{\red}{^\mathrm{red}}
\tikzset{
  closed/.style = {decoration = {markings, mark = at position 0.5 with { \node[transform shape, xscale = .8, yscale=.4] {/}; } }, postaction = {decorate} },
  open/.style = {decoration = {markings, mark = at position 0.5 with { \node[transform shape, scale = .7] {$\circ$}; } }, postaction = {decorate} }
}
\DeclareMathOperator{\Aut}{Aut}
\DeclareMathOperator{\Div}{Div}
\DeclareMathOperator{\Ext}{Ext}
\DeclareMathOperator{\Hom}{Hom}
\DeclareMathOperator{\Image}{Im}
\DeclareMathOperator{\Int}{Int}
\DeclareMathOperator{\Isom}{Isom}
\DeclareMathOperator{\Ker}{Ker}
\DeclareMathOperator{\Mor}{Mor}
\DeclareMathOperator{\NE}{NE}
\DeclareMathOperator{\Proj}{Proj}
\DeclareMathOperator{\SpB}{Sp_\mathrm{B}}
\DeclareMathOperator{\Spec}{Spec}
\DeclareMathOperator{\Spf}{Spf}
\DeclareMathOperator{\val}{val}
\begin{document}
\title{Gromov compactness in non-archimedean analytic geometry}
\author{Tony Yue YU}
\address{Tony Yue YU, Institut de Mathématiques de Jussieu - Paris Rive Gauche, CNRS-UMR 7586, Case 7012, Université Paris Diderot - Paris 7, Bâtiment Sophie Germain 75205 Paris Cedex 13 France}
\email{yuyuetony@gmail.com}
\date{July 30, 2014 (revised on June 30, 2015)}
\subjclass[2010]{Primary 14G22; Secondary 14D23 14D15 14H10 14T05}

\begin{abstract}
Gromov's compactness theorem for pseudo-holomorphic curves is a foundational result in symplectic geometry.
It controls the compactness of the moduli space of pseudo-holomorphic curves with bounded area in a symplectic manifold.
In this paper, we prove the analog of Gromov's compactness theorem in non-archimedean analytic geometry.
We work in the framework of Berkovich spaces.
First, we introduce a notion of Kähler structure in non-archimedean analytic geometry using metrizations of virtual line bundles.
Second, we introduce formal stacks and non-archimedean analytic stacks.
Then we construct the moduli stack of non-archimedean analytic stable maps using formal models, Artin's representability criterion and the geometry of stable curves.
Finally, we reduce the non-archimedean problem to the known compactness results in algebraic geometry.
The motivation of this paper is to provide the foundations for non-archimedean enumerative geometry.
\end{abstract}

\maketitle

\personal{PERSONAL COMMENTS ARE SHOWN!!!}

\tableofcontents

\section{Introduction}\label{sec:intro_(Gromov)}

Counting curves in algebraic varieties is an old subject. It gives non-trivial invariants, displays rich geometrical phenomena, and is intimately related to string theory in theoretical physics.
However, counting the number of curves naively rarely gives satisfactory answers.
It is not clear whether such numbers are finite, or whether they possess expected properties.
To do it in a scientific way, the first step is to compactify the moduli space of all possible curves in question.
In the context of symplectic geometry, Mikhail Gromov discovered that if we consider parametrized curves and allow double point singularities on the source curve, then the moduli space becomes compact.

\begin{thm}[Symplectic Gromov compactness \cite{Gromov_Pseudoholomorphic_1985,Pansu_Compactness_1994,Kontsevich_Enumeration_1995,Ye_Gromov_1994,Hummel_Gromov_1997}]
Let $X$ be a symplectic manifold equipped with a tame almost complex structure and let $K\subset X$ be a compact subset.
The moduli space of stable maps\footnote{A stable map is a map from a closed curve with at worst double point singularities to the target space $X$ satisfying the stability condition (see Definition \ref{def:stable_maps_(Gromov)}).} into $K$ with uniformly bounded genus and area is compact and Hausdorff.
\end{thm}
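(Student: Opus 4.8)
The plan is to run the standard analytic argument for Gromov compactness: extract a Gromov-convergent subsequence from an arbitrary sequence (giving compactness), then deduce Hausdorffness from uniqueness of such limits. So let $f_n\colon(\Sigma_n,z_{n,1},\dots,z_{n,k})\to X$ be stable maps with image in $K$, arithmetic genus $\le g$, and symplectic area $\int f_n^*\omega\le A$. Since $J$ is tamed by $\omega$, the area of a $J$-holomorphic map agrees up to a fixed positive constant with its energy $\tfrac12\int\abs{df_n}^2$, so the energies are uniformly bounded. First I would treat the domains: after passing to a subsequence the stabilizations of $(\Sigma_n,z_{n,\bullet})$ converge in Deligne--Mumford space to a stable pointed nodal curve, the dual graph stabilizes, and it suffices to analyze $J$-holomorphic maps from fixed model surfaces — spheres, tori, and long cylinders modeling degenerating necks.

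The analytic core is $\epsilon$-regularity plus energy quantization. Compactness of $K$ furnishes a constant $\hbar=\hbar(X,J,\omega,K)>0$ so that every non-constant finite-energy $J$-holomorphic sphere — equivalently, via the removable singularity theorem, every non-constant finite-energy $J$-holomorphic map on a punctured disc — carries energy $\ge\hbar$; this is the mean-value/monotonicity inequality. The set of bubble points $p$ with $\limsup_n\int_{B_\epsilon(p)}\abs{df_n}^2\ge\hbar$ for all $\epsilon>0$ is then finite, and away from it the gradients stay locally bounded, so elliptic bootstrapping gives $C^\infty_{\mathrm{loc}}$ subconvergence of $f_n$ to a $J$-holomorphic limit map on each component.

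At each concentration point one rescales by conformal dilations chosen via Hofer's lemma so the rescaled derivatives are bounded and normalized, extracts a non-constant finite-energy $J$-holomorphic plane, and completes it by removal of singularities to a bubble sphere; iterating this soft-rescaling builds a finite bubble tree over each point, the process terminating since each bubble costs $\ge\hbar$. The main obstacle, and technical heart of the argument, is the no-neck statement: on the long annuli interpolating between a principal component and a bubble, between two bubbles, or across a degenerating node, the energy and the image diameter both tend to $0$. This rests on the exponential-decay estimate for $J$-holomorphic cylinders of small energy, and it is exactly what forces the limiting configuration to be \emph{connected} and prevents area from escaping, so that $\Area(f_\infty)=\lim_n\Area(f_n)\le A$.

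Assembling the limit maps over the stabilized domain together with the bubble trees produces a connected nodal curve $\Sigma_\infty$ of arithmetic genus $\le g$ with a $J$-holomorphic map $f_\infty\colon\Sigma_\infty\to K$; contracting any ghost component with too few special points yields a \emph{stable} map, the Gromov limit. Stability makes every automorphism group finite and pins the limit down up to the remaining finitely many reparametrizations, so the Gromov limit is a well-defined point of the moduli space, which gives compactness. For Hausdorffness I would exhibit the Gromov topology as induced by an explicit metric (comparing domains in Deligne--Mumford space, images in $X$, and energy distributions), equivalently show that any two Gromov limits of a single sequence coincide; this uniqueness of limits is precisely the Hausdorff property.
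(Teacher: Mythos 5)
This theorem is not proved in the paper at all: it is quoted as background, with the proof delegated to the cited literature (Gromov, Pansu, Ye, Hummel, Kontsevich), and the paper's own work begins only with its non-archimedean analog. So there is no internal argument to compare against; what you have written is a compressed but accurate outline of the standard analytic proof found in those references — uniform energy bound from tameness, Deligne--Mumford convergence of the stabilized domains, the quantization constant from monotonicity/$\epsilon$-regularity on the compact set $K$, bubbling via Hofer rescaling and removal of singularities, and the no-neck exponential-decay estimate forcing connectedness and conservation of area, followed by stabilization of ghost components.

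As a proof it is of course only a skeleton: every load-bearing step (removable singularities, the annulus/exponential-decay lemma, termination of the bubble tree, and especially the Hausdorffness claim) is invoked rather than proved. The thinnest point is the last one: uniqueness of Gromov limits of sequences does not by itself yield the Hausdorff property unless you also establish that the Gromov topology is metrizable (or at least first countable), which is exactly what the metric you allude to is needed for; in Hummel's treatment this occupies a genuine chapter. If you intend this as a citation-level summary, it is faithful to the sources the paper points to; if you intend it as a self-contained proof, those analytic lemmas and the construction of the topology itself are the missing content.
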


The algebraic version of the theorem above was proposed by Maxim Kontsevich.

\begin{thm}[Algebraic Gromov compactness \cite{Kontsevich_Enumeration_1995,Fulton_Stable_1997}] 
Let $X$ be a projective scheme of finite type over a field. The moduli stack of stable maps into $X$ with uniformly bounded genus and degree is a proper algebraic stack.
\end{thm}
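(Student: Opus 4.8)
The plan is to realize the moduli stack as a global quotient of a locally closed subscheme of a Hilbert scheme --- which makes algebraicity and finite type immediate --- and then to establish properness via the valuative criterion, the substantive input being stable reduction for maps. Concretely, I would fix a projective embedding $X\hookrightarrow\mathbb P^{r}$ over the base field $k$, the genus $g$, the number $n\ge 0$ of marked points, and a bound $d$ on the degree $\deg f^{*}\cO_{\mathbb P^{r}}(1)$; only finitely many numerical types meet these constraints, so it suffices to treat the stack $\bcMgn(X,d)$ of one fixed type, sending a $k$-scheme $S$ to the groupoid of data $(\pi\colon\cC\to S,\ \sigma_{1},\dots,\sigma_{n},\ f\colon\cC\to X)$ with $\pi$ flat and proper of relative dimension one with at worst nodal geometric fibers of arithmetic genus $g$, with $\sigma_{i}$ disjoint sections into the smooth locus of $\pi$, and with $f$ stable on each geometric fiber. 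The crucial elementary input, due to Fulton and Pandharipande, is that for such a stable map the line bundle $\cL\coloneqq\omega_{\cC/S}\bigl(\textstyle\sum_{i}\sigma_{i}\bigr)\otimes f^{*}\cO_{\mathbb P^{r}}(3)$ is relatively very ample with $\rR^{1}\pi_{*}\cL=0$, so $\pi_{*}\cL$ is locally free of a rank $N+1$ computed from $g$, $n$, $d$ by Riemann--Roch.

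A local trivialization of $\pi_{*}\cL$ then embeds $\cC\hookrightarrow S\times_{k}\mathbb P^{N}\times_{k}X$ over $S$ as a family of closed subschemes of one fixed Hilbert polynomial $P$ with respect to $\cO(1)\boxtimes\cO(1)$, and this identifies $\bcMgn(X,d)$ with a quotient stack $[H/\mathrm{PGL}_{N+1}]$, where $H\subset\mathrm{Hilb}^{P}(\mathbb P^{N}\times_{k}X)$ is the locus of closed subschemes $Z$ for which the first projection realizes $Z$ as an $n$-pointed prestable curve of genus $g$ embedded by the complete linear system $\bigl|\omega_{Z}(\textstyle\sum_{i}\sigma_{i})\otimes f^{*}\cO(3)\bigr|$ and the second projection $Z\to X$ is a stable map. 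Each of these is an open or closed condition on the Hilbert scheme, so $H$ is a quasi-projective $k$-scheme; hence it is of finite type, and therefore so is $\bcMgn(X,d)$, which by the quotient presentation is an algebraic stack with affine diagonal. Neither singularity nor non-reducedness of $X$ intervenes here; only the projectivity and properness of $X$ over $k$ are used.

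It remains to verify the valuative criterion of properness, and since $\bcMgn(X,d)$ is of finite type with separated diagonal it suffices, by Noetherian approximation, to test it over discrete valuation rings. Let $R$ be a DVR with fraction field $K$ and let $(\cC_{K},\sigma_{i,K},f_{K})$ be a stable map over $\Spec K$. For uniqueness, stable maps have no infinitesimal automorphisms and their special fibers are stable, so a $K$-isomorphism of the generic fibers of two $R$-extensions extends uniquely over $R$ --- using properness of $X$ to extend the map component --- which gives separatedness. For existence, after a finite extension of $K$ apply stable reduction to the pointed source curve to obtain a family $\cC'\to\Spec R'$ of prestable pointed curves; then $f$ is regular on the generic fiber and, as $X$ is proper and $\cC'$ is a normal surface, extends to a rational map regular away from finitely many closed points of the special fiber; resolve the indeterminacy by a sequence of blow-ups at those points, extend $f$ over the resulting surface, and contract exactly the $f$-contracted rational tails and bridges carrying too few special points, passing to the relative stable model. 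The result is a stable map over $R'$ restricting to the original one, so $\bcMgn(X,d)$ is proper over $k$, and a finite union over the bounded numerical types is again proper.

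The main obstacle is this last existence step, the stable reduction theorem for maps, which must combine semistable reduction for pointed nodal curves, the extension of a rational map into a proper target, resolution of indeterminacy by blow-ups, and a contraction argument producing the stable model, all while keeping control of the marked points, the arithmetic genus, and the degree. By contrast, boundedness is immediate from the very ampleness of $\cL$, and algebraicity is formal once the quotient presentation is available.
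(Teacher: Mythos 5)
Your outline is essentially the classical Hilbert-scheme proof of Fulton--Pandharipande, and on that level it is sound; but be aware that the paper itself does not prove this statement at all --- it is quoted as a known theorem with references \cite{Kontsevich_Enumeration_1995,Fulton_Stable_1997} (it reappears as \cref{thm:stack_of_stable_maps_projective_case}), and the paper's own constructions deliberately take a different route. In \cref{sec:artin_criteria} the rigidification by $\omega_{\cC/S}(\sum\sigma_i)\otimes f^*\cO(3)$ and the presentation as a quotient of a locus in a Hilbert scheme are explicitly set aside, because they require projectivity of the target; instead, algebraicity is proved for an arbitrary scheme locally of finite presentation over a locally noetherian base via Artin's representability criterion (\cref{thm:algebraicity_of_stack_of_stable_maps}), encoding stable maps by coherent sheaves as in \cref{rem:stable_maps_as_sheaves}. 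What Artin's criterion buys is the generality needed for the (typically non-projective, possibly non-reduced) special fibers of formal models; what it does not give is properness, and that is precisely the content the paper imports from the cited references and feeds into \cref{prop:properness_of_special_fiber} by decomposing a stable map into the special fiber along the irreducible components $\tD_i$. One caveat on your properness step: the justification of separatedness by ``stable maps have no infinitesimal automorphisms'' is only valid in characteristic zero --- in positive characteristic the automorphism group scheme of a stable map can be non-reduced (e.g.\ a degree-$p$ Frobenius cover of $\bP^1$), and the paper's remark after \cref{thm:algebraicity_of_stack_of_stable_maps} records that in general one only gets a quasi-finite diagonal; separatedness should instead be argued through uniqueness of the extension over a trait (properness of the relevant Isom functor), which is how the cited sources and the paper's own Step 5 of \cref{thm:formal_model_stable_maps} proceed. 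The remainder of your plan --- boundedness from the very ample $\cL$, stable reduction for maps via semistable reduction, extension of the map into the proper target, resolution of indeterminacy and contraction --- matches the cited proof.
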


In this paper, we prove the analog of Gromov's compactness theorem in non-archimedean analytic geometry\footnote{We work in the framework of Berkovich spaces \cite{Berkovich_Spectral_1990,Berkovich_Etale_1993}.}.

Let us state our main result while precise definitions are given in the body of the paper.

Fix a complete discrete valuation field $k$, a positive real number $A$, and two non-negative integers $g$ and $n$.

\begin{thm}[Non-archimedean Gromov compactness (Theorem \ref{thm:non-archimedean_Gromov})]
Let $X$ be a $k$-analytic space, and let $\hL$ be a Kähler structure on $X$ with respect to an SNC formal model $\fX$ of $X$.
Let $\bcMgn(X,A)$ denote the moduli stack of \gn $k$-analytic stable maps into $X$ whose degree with respect to $\hL$ is bounded by $A$.
Then $\bcMgn(X,A)$ is a compact $k$-analytic stack.
If we assume moreover that the $k$-analytic space $X$ is proper\footnote{Recall that in Berkovich geometry, proper means compact and without boundary.} and that the residue field $\tilde k$ has characteristic zero, then $\bcMgn(X,A)$ is a proper $k$-analytic stack.
\end{thm}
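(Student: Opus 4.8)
The plan is to transport the algebraic Gromov compactness theorem recalled above to the non-archimedean setting through formal models, as announced in the introduction. The construction of $\bcMgn(X,A)$ carried out above should do more than produce a $k$-analytic stack: it should realise it as the generic fibre $\mathfrak M_\eta$ of a formal stack $\mathfrak M$, locally of finite presentation over $\Spf\kc$ by Artin's criterion. Two general facts about formal and analytic stacks, established in the relevant sections, then do the bookkeeping: whether $\mathfrak M$ is quasi-compact, respectively proper, over $\Spf\kc$ can be tested on the special fibre $\mathfrak M_s$ over $\tilde k$; and if $\mathfrak M$ is quasi-compact (resp. proper) over $\Spf\kc$, then $\mathfrak M_\eta$ is a compact (resp. proper) $k$-analytic stack. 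Thus everything reduces to showing that $\mathfrak M_s$ is of finite type over $\tilde k$ in general, and proper over $\tilde k$ when $X$ is proper and $\operatorname{char}\tilde k = 0$.

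I would then describe $\mathfrak M_s$ concretely. Because $\fX$ is an SNC formal model, $\fXs$ is a $\tilde k$-scheme of finite type — proper exactly when $X$ is proper — stratified by its irreducible components and their intersections. Combining semistable reduction for the source curve with flattening over $\fX$, a point of $\mathfrak M_s$ amounts to a map from an $n$-pointed genus $g$ nodal $\tilde k$-curve into $\fXs$ satisfying the stability condition; recording the combinatorial type (the dual complex of the tropicalised curve, and which stratum each component meets) and restricting to stratum closures, $\mathfrak M_s$ becomes, up to a constructible decomposition, a finite union of open substacks of algebraic Kontsevich moduli stacks $\bcM_{g',n'}(\overline{Z},\beta)$, where $\overline{Z}$ is a closure of a stratum of $\fXs$ — projective by virtue of the positivity built into the Kähler structure $\hL$ — and $\beta$ a curve class.

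The crux, and the step I expect to be the main obstacle, is to show that the bound $\deg_{\hL}\le A$ confines both the combinatorial type and the classes $\beta$ to a finite set. This is exactly what the Kähler structure is for: since $\hL$ is a Kähler structure with respect to the SNC model $\fX$, the $\hL$-degree of an analytic stable map is computed on the formal model as a sum of non-negative local contributions — namely the degrees of the restrictions of the reduced curve against the ample polarisations on the strata of $\fXs$, together with a non-negative weighted length of the tropicalised curve inside the dual complex of $\fXs$. Each summand being non-negative, the bound by $A$ leaves only finitely many possibilities for all of them. Feeding each of these finitely many projective targets and bounded curve classes into the algebraic Gromov compactness theorem shows that every constituent of $\mathfrak M_s$ is of finite type, hence that $\mathfrak M_s$ is of finite type over $\tilde k$; and when $X$, and with it every $\overline{Z}$ and the relevant combinatorics, is proper, the same theorem gives that each constituent, and therefore $\mathfrak M_s$ itself, is proper.

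Running the two comparison facts in reverse then finishes the proof: finite type of $\mathfrak M_s$ gives quasi-compactness of $\mathfrak M$ over $\Spf\kc$ and hence compactness of $\bcMgn(X,A) = \mathfrak M_\eta$; properness of $\mathfrak M_s$ gives properness of $\mathfrak M$ and hence of $\bcMgn(X,A)$. The hypothesis $\operatorname{char}\tilde k = 0$ is used only for the properness half: it makes the semistable reduction and resolution inputs available uniformly in families, so that the reduction of an analytic stable map is genuinely a stable map into $\fXs$ and the valuative criterion for $\bcMgn(X,A)$ is satisfied; in positive residue characteristic this can fail, and only compactness survives.
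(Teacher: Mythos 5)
Your overall architecture agrees with the paper: realize $\bcMgn(X,A)$ as the generic fibre of the formal stack $\bcMgn(\fX,A)$, test finiteness and properness on the special fibre $\bcMgn(\fXs,A)$, and control the latter by decomposing a stable map into $\fXs$ along the components of the special fibre, where the strict convexity of $\hL$ makes each component quasi-projective (projective when $\fX$ is proper) and the degree bound $\le A$, being a sum of non-negative contributions, leaves only finitely many combinatorial types and classes; this is exactly the paper's Proposition on $\bcMgn(\fXs,A)$, carried out there via a finite surjective gluing morphism from a stack of decomposed stable maps together with the Artin-criterion algebraicity theorem. (Two small inaccuracies: the degree of a map to $\fXs$ in this paper is just the pairing with the curvature classes $c(\hL)$ on the strata --- there is no separate ``tropical length'' summand --- and the passage from finite type of $\bcMgn(\fXs,A)$ to your ``finite union of open substacks of Kontsevich stacks'' needs the proper covering plus the general algebraicity result, not merely a constructible decomposition.)

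The genuine gap is in the properness half. You invoke, as an established bookkeeping fact, that a proper formal stack over $\Spf\kc$ has proper analytic generic fibre, and you spend the hypothesis $\operatorname{char}\tilde k=0$ on semistable reduction and a valuative criterion. Neither is how the argument can run here: the paper's definition of properness for $k$-analytic stacks is by interior inclusions of affinoid coverings (no valuative criterion is available), and the transfer ``proper formal stack $\Rightarrow$ proper analytic generic fibre'' is \emph{not} known in general --- it is precisely \cref{conj:properness_of_analytic_stacks}; the paper proves it only under the additional assumption that the diagonal of the formal stack is unramified (\cref{thm:properness_of_analytic_stacks}), via a projective proper covering of the special fibre, formal completion along it, and Temkin's boundarylessness results. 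The sole role of $\operatorname{char}\tilde k=0$ is to guarantee that formal stable maps have no infinitesimal automorphisms, so that the diagonal of $\bcMgn(\fX,A)$ is unramified and \cref{thm:properness_of_analytic_stacks} applies; without identifying and proving such a descent statement (or restricting to the Deligne--Mumford situation), your final step from properness of $\mathfrak M_s$ to properness of $\bcMgn(X,A)$ is unsupported. The compactness half, by contrast, only needs a compact quasi-smooth covering of the generic fibre, which the finite presentation of the formal stack provides, and there your argument is sound.
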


\medskip
The following three theorems are among the main ingredients in the proof of the non-archimedean Gromov compactness theorem above.

\begin{thm}[Construction of the moduli stack of algebraic stable maps in full generality (\cref{thm:algebraicity_of_stack_of_stable_maps})]
Let $\cX$ be a scheme locally of finite presentation over a locally noetherian base scheme $\cS$.
The moduli stack $\bcMgn(\cX/\cS)$ of \gn stable maps into $\cX$ over $\cS$ is an algebraic stack locally of finite presentation over $\cS$.
\end{thm}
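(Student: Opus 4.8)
The plan is to realize $\bcMgn(\cX/\cS)$ as an open substack of a Hom-stack fibered over the Artin stack of prestable curves, reducing the theorem to two ingredients which hold in complete generality: algebraicity of Hom-stacks, and openness of the stability condition. First I recall that prestable $n$-pointed genus $g$ curves --- flat, proper, finitely presented morphisms $C\to T$ of relative dimension one, with geometrically connected nodal fibres of arithmetic genus $g$, together with $n$ disjoint sections through the smooth locus --- are classified by an algebraic stack $\mathfrak{M}_{g,n}$, smooth and locally of finite presentation over $\Spec\Z$, carrying a universal curve $\pi\colon\mathcal{C}_{g,n}\to\mathfrak{M}_{g,n}$ that is representable, proper, flat and of finite presentation (Deligne--Mumford and Behrend--Manin in the stable range, and the Stacks Project for the general, non-separated stack). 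After base change to $\cS$ I set $\mathfrak{M}=\mathfrak{M}_{g,n}\times_{\Spec\Z}\cS$, with universal curve $\pi\colon\mathcal{C}\to\mathfrak{M}$, still representable, proper, flat and finitely presented.

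Next I form the relative Hom-stack
\[
\mathcal{H}\;=\;\underline{\Hom}_{\mathfrak{M}}\bigl(\mathcal{C},\ \cX\times_{\cS}\mathfrak{M}\bigr),
\]
whose objects over a scheme $T\to\cS$ are the data $(C\to T,\,p_1,\dots,p_n,\,f\colon C\to\cX)$ of a prestable $n$-pointed genus $g$ curve over $T$ together with an $\cS$-morphism $f$. Since $\pi$ is proper, flat and finitely presented and $\cX\to\cS$ is locally of finite presentation with separated diagonal (the diagonal of any morphism of schemes being an immersion), $\mathcal{H}$ is an algebraic stack, locally of finite presentation over $\mathfrak{M}$ and hence over $\cS$; this is the algebraicity of Hom-stacks, due to Grothendieck and Behrend--Manin when $\cX$ is projective and to Artin's criterion (Artin, Lieblich, Olsson, Aoki, Hall--Rydh) in general. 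The virtue of working with proper \emph{source} curves is that it makes Artin's criterion succeed even though $\cX$ is neither projective nor separated: the tangent and obstruction spaces of a map $f_0\colon C_0\to\cX$ over a field are the finite-dimensional coherent cohomology groups $\rH^i(C_0,f_0^{*}T_{\cX/\cS})$ for $i=0,1$ (finite because $C_0$ is proper); the deformations of the pointed curve are governed by the already-algebraic stack $\mathfrak{M}$; and the effectivity of a formal deformation $\widehat f\colon\widehat C\to\cX$ over a complete local noetherian ring $A$ reduces to Grothendieck's existence theorem for the proper $A$-curve $C$ once one notes that $\widehat f$ automatically factors through a quasi-compact open subscheme of $\cX$ --- the image of the special fibre is quasi-compact, and the infinitesimal thickenings of $C_0$ all have the same underlying topological space.

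It then remains to see that the substack $\bcMgn(\cX/\cS)\subset\mathcal{H}$ of stable maps is open. Since $\cX$ carries no polarization I express stability through automorphisms: a prestable $n$-pointed genus $g$ map over a field is stable if and only if it has no $f$-contracted component that is unstable as a pointed curve, equivalently if and only if its automorphism group scheme is zero-dimensional --- for an unstable contracted rational or elliptic component contributes a copy of $\Gm$, $\mathbb{G}_{\mathrm a}$, or an elliptic curve to the automorphisms, whereas in the absence of such a component the identity component of the automorphism group scheme fixes the whole curve and is therefore trivial. (This last equivalence, unlike the vanishing of infinitesimal automorphisms, persists in positive characteristic, where e.g.\ the map $x\mapsto x^{p}$ on $\mathbb{P}^{1}$ is stable but has $\mu_{p}$ in its automorphism scheme.) Now the inertia $I_{\mathcal{H}}\to\mathcal{H}$ is representable and locally of finite type, so $x\mapsto\dim\Aut_{x}$ is upper semicontinuous on $|\mathcal{H}|$, and the stable locus $\{x:\dim\Aut_{x}=0\}=\{x:\dim\Aut_{x}\le 0\}$ is open in $\mathcal{H}$. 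An open substack of an algebraic stack locally of finite presentation over $\cS$ is again one, which proves the theorem. The one substantive input is the algebraicity of $\mathcal{H}$: all of Artin's conditions are classical for Hom out of proper curves, and the only step requiring care is effectivity, where the non-properness of $\cX$ is handled by the quasi-compact-open reduction indicated above.
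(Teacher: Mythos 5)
Your route is genuinely different from the paper's. The paper verifies Artin's representability criterion directly for $\bcMgn(\cX/\cS)$: it encodes a stable map by the graded algebra $\bigoplus_m f_{T*}\big(\omega_{C/T}\big(\sum S_i\big)\big)^{\otimes m}$ on $\cX\times_\cS T$ (relative ampleness of $\omega_{C/T}\big(\sum S_i\big)$ over $\cX\times_\cS T$ being the key consequence of stability), and uses this to check fpqc descent, limit preservation, Rim--Schlessinger, effectivity via Grothendieck existence, quasi-separation, and a deformation/obstruction theory via the cotangent complex. You instead exhibit the moduli problem as an open substack of the relative Hom-stack over the algebraic stack of prestable pointed curves, outsourcing the main content to the general algebraicity of Hom-stacks (Artin, Lieblich, Olsson, Hall--Rydh), and you prove openness of stability by upper semicontinuity of the dimension of the inertia together with the characterization ``stable if and only if the automorphism group scheme is zero-dimensional''. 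That characterization and the semicontinuity argument are sound (modulo the slip that for a stable map the identity component of the automorphism group scheme need not be \emph{trivial} in positive characteristic --- your own $\mu_p$ example --- only zero-dimensional, which is all you use). Your approach is shorter and more modular; the paper's is self-contained and its coherent-sheaf encoding is reused later for descent and for formal models, so it buys more for the rest of the paper. A cosmetic point: your tangent/obstruction spaces $H^i(C_0,f_0^*T_{\cX/\cS})$ are only correct for $\cX$ smooth over $\cS$; in general one needs $\Ext^i$ of the pulled-back cotangent complex, as in the paper --- harmless here since you cite the general Hom-stack theorems anyway.

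There is, however, one substantive step missing. The stack $\bcMgn(\cX/\cS)$ in the statement parametrizes stable maps whose source family $C\to T$ is a \emph{scheme}, whereas the stack of prestable curves you invoke (and hence your Hom-stack) classifies families that are a priori only algebraic spaces; and scheme-source families do not obviously satisfy étale descent --- the paper's remark following its descent lemma makes exactly this point. So after constructing your open substack you must still identify it with the moduli functor of the theorem, i.e.\ show that a stable map with algebraic-space source automatically has a scheme as total space (and that scheme-source stable maps therefore form a stack). This does follow from the same mechanism the paper exploits: stability makes $\omega_{C/T}\big(\sum S_i\big)$ relatively ample for the separated morphism $C\to \cX\times_\cS T$, so $C$ is recovered as (an open subscheme of) a relative $\Proj$ over the scheme $\cX\times_\cS T$, hence is a scheme. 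But this comparison is not in your write-up and should be added; without it you have proved algebraicity of a stack that is not yet known to be the one in the statement.
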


\begin{thm}[Formal model of \kanal stable maps (\cref{thm:formal_model_stable_maps})]
Let $\fX$ be a formal scheme locally of finite presentation over the ring of integers $k^\circ$.
Let $T$ be a strictly $k$-affinoid space and let $\big(C\rightarrow T,(s_i),f\big)$ be an \gn \kanal stable map into $\fXe$ over $T$.
Up to passing to a quasi-étale covering of $T$, there exists a formal model $\fT$ of $T$ and an \gn formal stable map into $\fX$ over $\fT$ that gives back the $k$-analytic stable map when we apply the generic fiber functor.
\end{thm}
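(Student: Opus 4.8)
The plan is to package the whole $k$-analytic stable map as a single point of the formal completion of an algebraic moduli stack, extend this point to a formal model by Raynaud's theorem, pull back the universal family, and finally bring the target down from a compactification to $\fX$. First, since $C\to T$ is proper and $T$ is compact, the image $f(C)$ is compact, hence contained in $\fU_\eta$ for a quasi-compact open formal subscheme $\fU\subseteq\fX$; after replacing $\fX$ by $\fU$ I may assume $\fX$ quasi-compact. By the formal version of Nagata's compactification theorem $\fX$ is then an open formal subscheme of a proper formal $\kc$-scheme, and — $\kc$ being a complete discrete valuation ring — Grothendieck's existence theorem algebraizes the latter: I obtain a proper $\kc$-scheme $\overline{\cX}$, whose formal completion $\overline{\fX}$ contains $\fX$ as the formal completion of an open subscheme $\cX\subseteq\overline{\cX}$, so that $\fXe$ sits as an admissible open of the proper $k$-analytic space $(\overline{\cX}\otimes_{\kc}k)\an$.

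Post-composing $f$ with the open immersion $\fXe\hookrightarrow(\overline{\cX}\otimes_{\kc}k)\an$ turns $\big(C\to T,(s_i),f\big)$ into an \gn $k$-analytic stable map into the \emph{proper} analytic space $(\overline{\cX}\otimes_{\kc}k)\an$, which by GAGA is the same as a morphism $T\to\bcMgn(\overline{\cX}\otimes_{\kc}k/k)\an$ to the analytification of the algebraic moduli stack. By \cref{thm:algebraicity_of_stack_of_stable_maps} applied over $\Spec\kc$, the stack $\bcMgn(\overline{\cX}/\Spec\kc)$ is algebraic and locally of finite presentation over $\kc$, and since $T$ is quasi-compact the morphism factors through a quasi-compact open and closed substack $\mathfrak M$, which is proper over $\kc$ by algebraic Gromov compactness (the projective case extending to the proper target $\overline{\cX}$ by Chow's lemma). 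Let $\widehat{\mathfrak M}$ denote the $\fm$-adic completion of $\mathfrak M$: it is a proper formal algebraic stack over $\Spf\kc$ with generic fiber an open and closed substack of $\bcMgn(\overline{\cX}\otimes_{\kc}k/k)\an$, and our morphism may be regarded as a map $T\to\widehat{\mathfrak M}_\eta$.

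Now I would run Raynaud's extension theorem for $\widehat{\mathfrak M}$: for a proper formal algebraic stack one expects $\Hom(T,\widehat{\mathfrak M}_\eta)=\colim_{\fT'}\Hom(\fT',\widehat{\mathfrak M})$, the colimit over all formal models $\fT'$ of $T$. Choosing a smooth presentation $P\to\widehat{\mathfrak M}$ and using that a smooth surjection of $k$-analytic spaces admits a section after a quasi-étale surjective base change, after replacing $T$ by a quasi-étale covering the morphism $T\to\widehat{\mathfrak M}_\eta$ lifts to $P_\eta$; the classical Bosch–Lütkebohmert statement for the quasi-compact admissible formal scheme $P$ then yields a formal model $\fT$ of $T$ and a formal morphism $\fT\to\widehat{\mathfrak M}$ inducing it on generic fibers. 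Pulling back the universal family over $\widehat{\mathfrak M}$ — the formal completion of the universal algebraic stable map, hence a formal stable curve $\fC\to\fT$ with $n$ sections $(\fs_i)$ together with a morphism $\overline{\ff}\colon\fC\to\overline{\fX}$ — produces an \gn formal stable map $\big(\fC\to\fT,(\fs_i),\overline{\ff}\big)$ into $\overline{\fX}$ over $\fT$ whose generic fiber is the pullback of $\big(C\to T,(s_i),f\big)$ along the quasi-étale covering. It remains to descend the target: $\overline{\ff}_\eta$ is the pullback of $f$, which lands in the open $\fXe\subseteq\overline{\fX}_\eta$, so the open formal subscheme $\fZ:=\overline{\ff}^{-1}(\fX)\subseteq\fC$ satisfies $\fZ_\eta=\fC_\eta$; since $\fZ_\eta$ is the preimage of $\fZ_s$ under the surjective specialization map of the quasi-compact formal scheme $\fC$, this forces $\fZ_s=\fC_s$, hence $\fZ=\fC$, and $\overline{\ff}$ factors through a morphism $\ff\colon\fC\to\fX$. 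Then $\big(\fC\to\fT,(\fs_i),\ff\big)$ is the desired formal stable map into $\fX$ with the prescribed generic fiber.

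I expect the main obstacle to be the Raynaud-type extension for morphisms into the generic fiber of a proper formal algebraic \emph{stack}. This requires developing enough of the theory of formal and analytic stacks to reduce, via a smooth presentation and a single quasi-étale covering of $T$ — the covering being precisely what absorbs the passage to the atlas, i.e.\ the ramified base change implicit in stable reduction — to the classical statement of Bosch–Lütkebohmert for formal schemes, and to check that forming universal families, taking formal completions, and passing to generic fibers all commute in the required way. A subsidiary point is the properness of the relevant component of $\bcMgn(\overline{\cX}/\Spec\kc)$ for $\overline{\cX}$ proper but possibly non-projective, which I would obtain from the projective case of algebraic Gromov compactness by Chow's lemma.
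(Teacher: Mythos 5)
Your strategy stands or falls on being able to replace the formal target $\fX$ by an algebraic one, and that is where it breaks. After invoking a Nagata-type compactification of the formal scheme, you assert that ``Grothendieck's existence theorem algebraizes'' the resulting proper formal $\kc$-scheme. Grothendieck's existence theorem only algebraizes coherent sheaves; the algebraization of the formal scheme itself (EGA III 5.4.5) requires an ample line bundle compatible with the formal structure, and a proper formal scheme over $\Spf\kc$ is in general \emph{not} algebraizable. So the scheme $\overline{\cX}$ on which your entire construction rests (the algebraic stack $\bcMgn(\overline{\cX}/\Spec\kc)$, its completion, its universal family) need not exist. This is not a repairable detail: the whole point of the statement is to handle targets that are genuinely formal, since for analytifications of proper algebraic varieties the result already follows from GAGA (Theorem \ref{thm:moduli_of_analytic_stable_maps_GAGA}). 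Note also that if you tried to sidestep algebraization by working directly with $\bcMgn(\fX)$ and its generic fiber, you would be assuming essentially the statement of Theorem \ref{thm:moduli_of_analytic_stable_maps}, whose proof is what this theorem feeds into.

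Two further steps are also unjustified. First, the existence of a quasi-compact open and closed substack $\mathfrak M$ through which $T$ factors, and its properness ``by Chow's lemma'': without a polarization there is no degree invariant cutting out bounded open--closed pieces, connected components of a stack that is only locally of finite presentation need not be quasi-compact, and properness of the moduli of stable maps to a proper non-projective target is itself a nontrivial theorem (in this paper boundedness is obtained only later, in Section \ref{sec:non-archimedean_Gromov_compactness}, using the Kähler structure). Second, the Raynaud-type extension $\Hom(T,\widehat{\mathfrak M}_\eta)=\colim_{\fT'}\Hom(\fT',\widehat{\mathfrak M})$ for a proper formal \emph{stack}, which you yourself flag as the main obstacle, is left unproved; reducing it to Bosch--Lütkebohmert via a smooth atlas and a quasi-étale covering of $T$ is plausible in outline but carries real work (identification of the generic fiber, compatibility of completion and pullback of the universal family). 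The paper's proof avoids all three difficulties: it algebraizes only the \emph{source} family of curves via formal GAGA (a proper family of curves is projective Zariski-locally on the affine base), adds sections and maps to the proper Deligne--Mumford stack $\bcMgnprime$ of stable curves to extend the family over a proper modification of the base, and then extends the map to the still-formal target using Hom-schemes, de Jong's three-point lemma and the valuative criterion, before completing and stabilizing. You should rework your argument along lines that never require the target to be algebraic.
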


\begin{thm}[Construction of the moduli stack of \kanal stable maps (\cref{cor:moduli_of_analytic_stable_maps})]
Let $X$ be a paracompact strictly \kanal space.
The moduli stack $\bcMgn(X)$ of \gn \kanal stable maps into $X$ is a \kanal stack.
\end{thm}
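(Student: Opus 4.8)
The plan is to realize $\bcMgn(X)$ as the generic fiber of a \emph{formal} moduli stack attached to a formal model of $X$: the algebraicity theorem \cref{thm:algebraicity_of_stack_of_stable_maps} shows that this formal moduli stack is formal algebraic, and the formal model theorem \cref{thm:formal_model_stable_maps} shows that its generic fiber represents the analytic moduli problem. Since $X$ is a paracompact strictly $k$-analytic space, it admits a formal model $\fX$ locally of finite presentation over $\kc$ with $\fXe\simeq X$.

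For each $m\geq 0$ put $\fX_m:=\fX\times_{\Spf\kc}\Spec(\kc/\fm^{m+1})$, a scheme locally of finite presentation over the noetherian base $\Spec(\kc/\fm^{m+1})$. By \cref{thm:algebraicity_of_stack_of_stable_maps} the moduli stack $\bcMgn(\fX_m/(\kc/\fm^{m+1}))$ is an algebraic stack locally of finite presentation over $\kc/\fm^{m+1}$, and, $\bcMgn$ being a moduli problem, its formation commutes with base change; hence the infinitesimal thickenings $\fX_m\hookrightarrow\fX_{m+1}$ assemble these algebraic stacks into a base-change compatible system whose colimit $\bcMgn(\fX/\kc):=\colim_m\bcMgn(\fX_m/(\kc/\fm^{m+1}))$ is a formal algebraic stack locally of finite presentation over $\kc$. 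Applying the generic fiber functor for formal stacks constructed earlier in the paper, $(\bcMgn(\fX/\kc))_\eta$ is a $k$-analytic stack. (If one prefers to work only with quasi-compact objects, one first reduces to the case where $X$ is quasi-compact by writing it as an exhaustion $X=\bigcup_i X_i$ by quasi-compact strictly $k$-analytic domains; every $k$-analytic stable map over a fixed affinoid base, having proper and hence compact source, factors through some $X_i$, so $\bcMgn(X)$ is the increasing union of the $\bcMgn(X_i)$, and one may take $\fX$ a quasi-compact admissible formal model of each $X_i$.)

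It remains to identify $(\bcMgn(\fX/\kc))_\eta$ with $\bcMgn(X)$. Taking the generic fiber of a formal stable map into $\fX$ yields a $k$-analytic stable map into $\fXe=X$, and this defines a morphism of stacks for the quasi-étale topology on strictly $k$-affinoid spaces, $\Phi\colon(\bcMgn(\fX/\kc))_\eta\to\bcMgn(X)$, which we claim is an equivalence. That $\Phi$ is essentially surjective quasi-étale-locally on the base is precisely the formal model theorem \cref{thm:formal_model_stable_maps}. For full faithfulness one shows that over a strictly $k$-affinoid space $T$ the groupoid of formal stable maps into $\fX$ over formal models of $T$, taken in the filtered colimit over all formal models $\fT$ of $T$, is equivalent to the groupoid of $k$-analytic stable maps into $X$ over $T$: a given isomorphism of generic fibers lifts to compatible formal models by Raynaud's equivalence applied to the proper (hence quasi-compact and quasi-separated) nodal source curve, and one then matches the marked sections and the morphism to $\fX$ after a further admissible blow-up of the base, using uniqueness of flat closures together with Raynaud's equivalence and the fact that $\fX$ is a fixed formal model of $X$. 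A morphism of quasi-étale stacks which is fully faithful and locally essentially surjective is an equivalence, so $\bcMgn(X)\simeq(\bcMgn(\fX/\kc))_\eta$ is a $k$-analytic stack.

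I expect the full faithfulness step to be the main obstacle: one has to descend the nodal curve, its marked sections and the morphism to $\fX$ simultaneously from the generic fiber to mutually compatible formal models over $\kc$, controlling the flatness of the formal nodal curve along its nodes and the coherence of the successive admissible blow-ups. This is exactly where Raynaud's flattening techniques and the geometry of stable curves are needed, and it is the technical heart of the comparison realized by $\Phi$.
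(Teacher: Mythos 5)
Your overall strategy coincides with the paper's: pick a formal model $\fX$ of $X$, assemble the formal moduli stack $\bcMgn(\fX)$ as the colimit of the algebraic stacks over the thickenings $\Spec(\kc/\varpi^{m+1})$ supplied by \cref{thm:algebraicity_of_stack_of_stable_maps} (this is \cref{prop:moduli_of_formal_stable_maps}), and identify its generic fiber with $\bcMgn(X)$, with quasi-étale-local essential surjectivity given by \cref{thm:formal_model_stable_maps}; this is exactly the route of \cref{thm:moduli_of_analytic_stable_maps} and \cref{cor:moduli_of_analytic_stable_maps}.

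However, the full faithfulness step, which you rightly single out as the heart of the comparison, is not established by the argument you sketch, and as stated it would fail. Applying Raynaud's equivalence to the proper nodal source curve only yields a formal model of the generic-fiber isomorphism after admissible blow-ups of the formal curves $\fC_1$, $\fC_2$ themselves; such blow-ups are in general no longer flat families of nodal curves over the base, so they leave the moduli problem, and ``uniqueness of flat closures'' does not by itself show that the modification can be chosen to come from an admissible blow-up of the base $\fT$ alone, which is what the filtered-colimit comparison requires. What is needed is a rigidity statement for stable curves, and this is how the paper proceeds: after reducing to a common base model $\fT_{12}$ by admissible blow-ups, it algebraizes the two formal pre-stable curves by formal GAGA, adds (after an étale cover of the base) auxiliary disjoint sections making them stable with matching sections on generic fibers, and then considers the $\Isom$ scheme of pointed stable curves. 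The generic-fiber isomorphism gives a section of $\mathit{Isom}$ over the generic fiber of the base; the schematic closure of its image is proper over the base by the valuative criterion, using the separatedness of the moduli stack $\bcMgnprime$ of pointed stable curves; completing along the special fiber produces a formal modification of the base over which the isomorphism of pointed curves extends, and the identification of the two maps to $\fX$ then extends by continuity. Some input of this kind (separatedness/uniqueness of stable models) is indispensable; without it your comparison functor $\Phi$ is not shown to be fully faithful, and hence the descent of the quasi-étale-local models from \cref{thm:formal_model_stable_maps} to $T$ is also not justified.
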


\medskip

The motivation of studying Gromov compactness in non-archimedean analytic geometry comes from mirror symmetry.
Mirror symmetry is a duality between degenerating families of Calabi-Yau manifolds.
An algebraic family of complex varieties over a punctured disc gives rise naturally to a non-archimedean analytic space $X$ over the Laurent power series field $\mathbb C(\!(t)\!)$.
Given an SNC formal model $\fX$ of $X$ over the formal power series ring $\mathbb C[\![t]\!]$,
Vladimir Berkovich constructed a strong deformation retraction from $X$ onto a polytope $\SX$ inside $X$, which is homeomorphic to the dual intersection complex of the special fiber $\fXs$ (cf.\ \cite{Berkovich_Smooth_1999,Nicaise_Essential_skeleton_2013}).
In \cite{Kontsevich_Homological_2001} \S 3.3, Kontsevich and Soibelman proposed that Berkovich's deformation retraction can be regarded as an analog of the conjectural SYZ fibration in mirror symmetry.
We refer to \cite{Kontsevich_Affine_2006} and the works of Gross, Siebert, Hacking and Keel \cite{Gross_Real_Affine_2011,Gross_Tropical_2011,Gross_Mirror_Log_2011} for related developments along this direction.
Our considerations on non-archimedean Gromov compactness are to provide the foundations for the study of enumerative geometry in this setting.
In the subsequent paper \cite{Yu_Tropicalization_2014}, we study the tropicalization of the moduli stack of \kanal stable maps $\bcMgn(X,A)$ and its relation to the moduli space of tropical curves.
In the paper \cite{Yu_Enumeration_cylinders_2015}, we show how the general considerations here lead to new enumerative invariants for log Calabi-Yau surfaces.

\medskip
\paragraph{\textbf{Outline of the paper}}

In Section \ref{sec:basic_settings_(Gromov)}, we describe the basic settings of global tropicalization.
Given a $k$-analytic space $X$ and an SNC formal model $\fX$ of $X$,
we recall the construction of a continuous, proper, surjective map $\tau$ from $X$ to a simplicial complex $\SX$ (cf.\ \cite{Boucksom_Singular_2011,Yu_Balancing_2013,Gubler_Skeletons_2014,Kontsevich_Non-archimedean_2002}).
We call $\SX$ the \emph{Clemens polytope} and regard it as the tropicalization of $X$ with respect to the formal model $\fX$.
We prove the functoriality of tropicalization in Proposition \ref{prop:functoriality_of_Clemens}.

In order to make sure that the moduli space of curves in $X$ is of finite type, we need to impose certain positivity conditions on $X$.
A non-archimedean analogue of Kähler geometry was proposed by Kontsevich and Tschinkel in \cite{Kontsevich_Non-archimedean_2002}.
Our approach uses the settings in \cref{sec:basic_settings_(Gromov)} and follows mainly the suggestions in \cite{Kontsevich_Non-archimedean_2002}.
Basic notions of virtual line bundle, metrization, curvature and Kähler structure are defined in Section \ref{sec:virtual_line_bdles}.
In Section \ref{sec:curvature}, we prove the functoriality of curvature for metrized virtual line bundles.
In Section \ref{sec:virtual_line_bundles_on_a_curve}, we define the degree of virtual line bundles on curves.

Due to non-trivial automorphisms of objects, moduli spaces often carry the structure of stacks.
It also happens in our considerations.
In \cref{sec:definition_of_stacks}, we introduce the notions of formal stack and $k$-analytic stack.
We will use formal stacks as a tool for the construction of $k$-analytic stacks.
We prove that in the Deligne-Mumford case, the generic fiber of a proper formal stack is a proper $k$-analytic stack (\cref{thm:properness_of_analytic_stacks}).

In Section \ref{sec:artin_criteria}, we use Artin's representability criterion to construct the moduli stack of algebraic stable maps in full generality without the projectivity assumption (Theorem \ref{thm:algebraicity_of_stack_of_stable_maps}).

In Section \ref{sec:stacks_of_k-analytic_stable_maps},
we define formal stable maps and $k$-analytic stable maps.
We construct formal models for $k$-analytic stable maps in \cref{thm:formal_model_stable_maps}.
Then we construct the moduli stack of $k$-analytic stable maps by exhibiting it as the generic fiber of the moduli stack of formal stable maps (Theorem \ref{thm:moduli_of_analytic_stable_maps}).
The proof uses formal GAGA to reduce to the algebraic situation, followed by a careful study of the geometry of stable curves inspired by the work of de Jong on alterations \cite{de_Jong_Smoothness_1996,de_Jong_Families_1997,Abramovich_Alterations_2000}.

In Section \ref{sec:non-archimedean_Gromov_compactness}, we reduce the problem of non-archimedean Gromov compactness to the properness of the moduli stack of algebraic stable maps into the special fiber $\fXs$.
Then we prove the latter by constructing a proper covering using known compactness results in algebraic geometry.

\medskip
\paragraph{\bfseries Related works}

We refer to the works by Conrad and Temkin \cite{Conrad_Descent_for_coherent_2003,Conrad_Relative_2006,Conrad_Descent_2010} for the descent theory in non-archimedean geometry.

We would like to mention the related work by Abramovich, Caporaso and Payne \cite{Abramovich_Tropicalization_2012}.
They studied very interesting geometry of the moduli stack of stable curves from the non-archimedean analytic viewpoint.
We note that the moduli stack of stable curves is a special case of the moduli stack of stable maps where the target space is a point.
Other special cases of the moduli stack of stable maps are recently studied by Cavalieri, Markwig and Ranganathan \cite{Cavalieri_Tropicalizing_the_space_2014,Cavalieri_Tropical_compactification_2014,Ranganathan_Moduli_of_rational_2015}.

Besides the work by Kontsevich and Tschinkel \cite{Kontsevich_Non-archimedean_2002}, non-archimedean analogs of Kähler geometry were considered by Boucksom, Chambert-Loir, Favre, Gubler, Jonsson and Zhang \cite{Kontsevich_Non-archimedean_2002,Zhang_Positive_line_bundles_1995,Gubler_Local_heights_1998,Chambert-Loir_Mesures_2006,Chambert-Loir_Heights_2011,Boucksom_Singular_2011} with different viewpoints.

In \cite{Abramovich_Compactifying_2002}, Abramovich and Vistoli studied the moduli stack of twisted stable maps into a tame Deligne-Mumford stack which admits a projective coarse moduli scheme.

We are also inspired by the work of Hartl and Lütkebohmert \cite{Hartl_On_rigid-analytic_Picard_2000}, where they constructed rigid analytic Picard varieties.

\medskip

\paragraph{\bfseries Acknowledgments}
I am very grateful to Maxim Kontsevich for inspirations and guidance.
Special thanks to Antoine Chambert-Loir who provided me much advice and support.
I appreciate valuable discussions with Ahmed Abbes,  Denis Auroux, Pierrick Bousseau, Olivier Debarre, Antoine Ducros, Lie Fu, Ilia Itenberg, François Loeser, Johannes Nicaise, Mauro Porta, Matthieu Romagny, Michael Temkin and Jean-Yves Welschinger.
I would like to thank the referees for helpful comments.

\section{Basic settings of global tropicalization}\label{sec:basic_settings_(Gromov)}

In this section, we describe the basic settings of global tropicalization (cf.\ \cite{Boucksom_Singular_2011,Yu_Balancing_2013,Gubler_Skeletons_2014,Kontsevich_Non-archimedean_2002}).

Let $k$ denote a complete discrete valuation field, $k^\circ$ the ring of integers, $k^{\circ\circ}$ the maximal ideal of $k^\circ$, and $\tilde k$ the residue field.

We use the theory of non-archimedean analytic geometry in the sense of Vladimir Berkovich \cite{Berkovich_Spectral_1990,Berkovich_Etale_1993}.
All the $k$-analytic spaces we consider in this paper are assumed to be strictly $k$-analytic in the sense of \cite{Berkovich_Etale_1993}.

For an adic ring $\mathcal A$, we denote by $\Spf(\mathcal A)$ the formal spectrum of $\mathcal A$.
For a $k$-affinoid algebra $A$, we denote by $\SpB(A)$ the Berkovich spectrum of $A$.

For $0\leq d\leq n$ , $\mathbf m=(m_0,\dots,m_d)\in\Z^{d+1}_{>0}$ and $a\in k^{\circ\circ}\setminus 0$, put
\begin{equation}\label{eq:standard_formal_scheme}
\fS(n,d,\mathbf m,a) = \Spf \left(k^\circ\{T_0,\dots,T_{n}, T\inv_{d+1},\dots,T\inv_n\}/(T_0^{m_0}\cdots T_{d}^{m_d}-a)\right) .
\end{equation}
When $\mathbf m = (1,\dots,1)$, we denote $\fS(n,d,\mathbf m,a)$ simply by $\fS(n,d,a)$.

\begin{defin}[\cite{Berkovich_Smooth_1999}]
A formal scheme $\fX$ is said to be \emph{(locally) finitely presented} over $k^\circ$ if it is a (locally) finite union of open affine subschemes of the form \[\Spf\big(k^\circ\{T_0,\dots,T_n\}/(f_1,\dots,f_m)\big) .\]
\end{defin}

Let $\fX$ be a formal scheme locally finitely presented over $k^\circ$.
Its generic fiber $\fXe$ is a paracompact strictly $k$-analytic space; its special fiber $\fXs$ is a scheme locally of finite type over the residue field $\tilde k$ (cf.\ \cite{Berkovich_Vanishing_1994}).
We denote by $\pi\colon \fXe\rightarrow\fXs$ the reduction map from the generic fiber to the special fiber of $\fX$ (cf.\ \cite{Berkovich_Vanishing_1994} \S 1).

\begin{defin}
Let $X$ be a $k$-analytic space. A \emph{formal model} $\fX$ of $X$ consists of a formal scheme $\fX$ locally finitely presented over $k^\circ$ and an isomorphism between $X$ and the generic fiber $\fXe$ of $\fX$.
\end{defin}

\begin{defin}\label{def:gsss_formal_scheme}
A formal scheme $\fX$ finitely presented over $\kc$ is said to have \emph{simple normal crossings} (\emph{SNC} for short) if it satisfies the following conditions:
\begin{enumerate}[(i)]
\item Every point of $\fX$ has an open affine neighborhood $\fU$ such that the structure morphism $\fU\rightarrow\Spf k^\circ$ factors through an étale morphism $\phi\colon \fU\rightarrow\mathfrak S(n,d,\mathbf m,\varpi)$ for some $0\leq d\leq n$, $\mathbf m=(m_0,\dots,m_d)\in\Z^{d+1}_{>0}$ and a uniformizer $\varpi$ of $k$.
We require that $m_i$ does not equal to the characteristic of the residue field $\tilde k$ for every $i$.
\item All the intersections of the irreducible components of the special fiber $\fXs$ are either empty or geometrically irreducible.
\end{enumerate}
\end{defin}

Let $X$ be a $k$-analytic space and let $\fX$ be an SNC formal model\footnote{If the residue field $\tilde k$ has characteristic zero and if $X$ is compact quasi-smooth and strictly \kanal, then SNC formal models exist (cf.\ Temkin \cite{Temkin_Desingularization_2008}).} of $X$.
Let $\Set{D_i | i\in\IX}$ denote the set of the irreducible components of the special fiber $\fXs\red$ with its reduced scheme structure.
For every non-empty subset $I\subset\IX$, we put $D_I=\bigcap_{i\in I}D_i$.
We denote by $\mathit{mult}_i$ the multiplicity of $D_i$ in $\fXs$.

\begin{defin}
The \emph{Clemens polytope} $\SX$ is the simplicial subcomplex of the simplex $\Delta^{\IX}$ such that for every non-empty subset $I\subset\IX$, the simplex $\Delta^I$ is a face of $\SX$ if and only if the stratum $D_I$ is non-empty.
\end{defin}

One can construct a canonical inclusion map $\theta\colon\SX\hookrightarrow\fXe$ and a canonical strong deformation retraction from $\fXe$ to $\SX$ (cf.\ \cite{Berkovich_Smooth_1999,Nicaise_Essential_skeleton_2013}).
For the purpose of this paper, it suffices to know the construction of the retraction map $\tau\colon\fXe\to\SX$, i.e.\ the final moment of the strong deformation retraction.


\begin{defin}
A \emph{simple function} $\varphi$ on the Clemens polytope $\SX$ is a real valued function that is affine on every simplicial face of $\SX$.
For $i\in\IX$, we denote by $\varphi(i)$ the value of $\varphi$ at the vertex $i$.
\end{defin}

\begin{defin}
A \emph{vertical divisor} on $\fX$ is a Cartier divisor on $\fX$ supported on the special fiber $\fXs$.
\end{defin}

Let $\Div_0(\fX)_\R$ denote the vector space of vertical $\R$-divisors on $\fX$.
It is of dimension the cardinality of $\IX$.
An effective vertical divisor $D$ on $\fX$ is locally given by a function $u$ up to multiplication by invertible functions.
So $\val(u(x))$ defines a continuous function on $\fXe$ which we denote by $\varphi^0_D$.
The map $D\mapsto \varphi^0_D$ extends by linearity to a map from $\Div_0(\fX)_\R$ to the space of continuous functions $C^0(\fXe)$ on $\fXe$.

\begin{prop}\label{prop:map_to_Clemens}
Let $\tau\colon \fXe\rightarrow \Div_0(\fX)_\R^*$ be the evaluation map defined by \[\langle\tau(x),D\rangle=\varphi^0_D (x),\]
for any $x\in\fXe$, $D\in\Div_0(\fX)_\R$. Then
\begin{enumerate}[(i)]
\item The image of $\tau$ is naturally identified with the Clemens polytope $\SX$.
\item For any vertical $\R$-divisor $D=\sum a_i D_i\in \Div_0(\fX)_\R$, there exists a unique simple function $\varphi_D$ on $\SX$ such that $\varphi^0_D=\varphi_D\circ\tau$.
\end{enumerate}
\end{prop}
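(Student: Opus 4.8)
The plan is to deduce both statements from an explicit local description of $\tau$ on the standard charts furnished by the SNC hypothesis. Fix a finite affine open cover $\fX=\bigcup_\alpha\fU_\alpha$ together with étale morphisms $\phi_\alpha\colon\fU_\alpha\to\fS(n_\alpha,d_\alpha,\mathbf m_\alpha,\varpi)$ as in \cref{def:gsss_formal_scheme}, and write $T_0,\dots,T_{n_\alpha}$ for the coordinates on the target. Refining the cover, we may assume each locus $\{\phi_\alpha^*T_j=0\}$ with $0\le j\le d_\alpha$ is irreducible; it is then a component of $(\fU_\alpha)_s$, equal to $D_{i(j)}\cap\fU_\alpha$ for a unique index $i(j)\in\IX$, and $j\mapsto i(j)$ is a bijection onto the set $I_\alpha\subset\IX$ of indices of the components meeting $\fU_\alpha$. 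Since $D_{i(j)}$ is cut out on $\fU_\alpha$ by $\phi_\alpha^*T_j$, while $D_i\cap\fU_\alpha=\varnothing$ for $i\notin I_\alpha$, we obtain for every $x\in(\fU_\alpha)_\eta$ that $\langle\tau(x),D_{i(j)}\rangle=\varphi^0_{D_{i(j)}}(x)=\val(\phi_\alpha^*T_j(x))$, which is $\ge 0$ because $|T_j|\le1$ on $\fS(n_\alpha,d_\alpha,\mathbf m_\alpha,\varpi)_\eta$, while $\langle\tau(x),D_i\rangle=0$ for $i\notin I_\alpha$. Moreover the relation defining the standard model pulls back to $\varpi=\prod_{j=0}^{d_\alpha}(\phi_\alpha^*T_j)^{m_j}$, and together with the equalities $\mult_{i(j)}=m_j$ this yields $\sum_{i\in I_\alpha}\mult_i\langle\tau(x),D_i\rangle=\val(\varpi)=1$.

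Consequently $\tau(x)$ lies in the standard simplex $\Delta^{\IX}$, embedded in $\Div_0(\fX)_\R^*$ so that the vertex $i$ is the functional taking the value $1/\mult_i$ at $D_i$ and $0$ at the other $D_{i'}$; more precisely $\tau(x)\in\Delta^{I(x)}$ with $I(x)=\{\,i:\langle\tau(x),D_i\rangle>0\,\}$. This face belongs to $\SX$, because $\langle\tau(x),D_i\rangle>0$ means $|\phi_\alpha^*T_j(x)|<1$, which is precisely the condition that the reduction $\pi(x)$ lie on $\{\phi_\alpha^*T_j=0\}=D_i\cap\fU_\alpha$; intersecting over $i\in I(x)$ gives $\pi(x)\in D_{I(x)}$, so $D_{I(x)}\ne\varnothing$. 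For the reverse inclusion, let $p$ be a point of a face $\Delta^I$ of $\SX$, so $\langle p,D_i\rangle=c_i>0$ for $i\in I$, $\langle p,D_i\rangle=0$ otherwise, with $D_I\ne\varnothing$ and $\sum_{i\in I}\mult_ic_i=1$. Choose a chart $\fU_\alpha$ meeting $D_I$; then $I\subset I_\alpha$, and $\phi_\alpha$ maps $D_I\cap\fU_\alpha$ étale onto an open subset of the corresponding coordinate stratum of $\fS(n_\alpha,d_\alpha,\mathbf m_\alpha,\varpi)_s$, which is irreducible and so contains its generic point. On the generic fiber of the standard model, take the monomial point $\xi$ with $\val(T_j)=c_{i(j)}$ for $0\le j\le d_\alpha$ (so nonzero exactly when $i(j)\in I$) and $\val(T_j)=0$ for $j>d_\alpha$; this is a well-defined Berkovich point because $\sum_jm_j\val(T_j)=1=\val(\varpi)$, and it reduces to the generic point of that stratum. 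As $\phi_\alpha$ is étale, the point $\xi$ lifts to some $x\in(\fU_\alpha)_\eta$: its reduction $\pi(\xi)$ lies in the image of $\phi_\alpha$ on special fibers, and étaleness permits lifting $\xi$ over a chosen point of $(\fU_\alpha)_s$ above $\pi(\xi)$. Then $\tau(x)=p$ by the formula above, so the image of $\tau$ is exactly $\SX$; this proves (i).

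For (ii), given $D=\sum_ia_iD_i\in\Div_0(\fX)_\R$, let $\varphi_D$ be the restriction to $\SX\subset\Div_0(\fX)_\R^*$ of the linear functional $f\mapsto\langle f,D\rangle$. Being linear, it is affine on each simplicial face, hence a simple function, and $\varphi_D(i)=a_i/\mult_i$. The identity $\varphi^0_D=\varphi_D\circ\tau$ is checked chart by chart: on $(\fU_\alpha)_\eta$ one has $\varphi^0_D=\sum_{i\in I_\alpha}a_i\val(\phi_\alpha^*T_j(\cdot))$ (where $T_j$ is the coordinate with $i(j)=i$) by the local description of $D|_{\fU_\alpha}$, and this agrees with $\langle\tau(\cdot),D\rangle$. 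Uniqueness of $\varphi_D$ follows at once from the surjectivity of $\tau$ onto $\SX$.

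The step I expect to be the main obstacle is the surjectivity in (i): one has to realize every point of every face of $\SX$ as $\tau(x)$ for a genuine point $x$ of the Berkovich space $\fXe$, and the only non-formal ingredient is that the monomial point of the standard model lifts through the étale chart $\phi_\alpha$ — that is, that it lies in the image of $(\fU_\alpha)_\eta\to\fS(n_\alpha,d_\alpha,\mathbf m_\alpha,\varpi)_\eta$. This uses the openness of étale morphisms of formal schemes together with the compatibility of Berkovich's generic-fiber and reduction functors with étale morphisms, so that a point of the base generic fiber lifts as soon as one exhibits a point of the special fiber of the cover lying above its reduction. Everything else is bookkeeping with the relation $\prod_jT_j^{m_j}=\varpi$ and with the valuations of the coordinate functions on the standard model.
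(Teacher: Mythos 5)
Your proof is correct and follows essentially the same route as the paper: everything is reduced to the étale charts $\phi_\alpha\colon\fU_\alpha\to\fS(n_\alpha,d_\alpha,\mathbf m_\alpha,\varpi)$, where $\tau$ becomes $x\mapsto(\val\phi_\alpha^*T_j(x))_j$ and the relation $\prod_j T_j^{m_j}=\varpi$ confines the image to the simplex $\sum_j m_j x_j=1$. The only difference is that you spell out what the paper dismisses as ``easily verified''---notably the surjectivity onto each face of $\SX$ via monomial points of the standard model lifted through the étale chart---so this is a faithful elaboration rather than a different argument.
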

\begin{proof}
Let $\fU$ be an open affine subscheme of $\fX$ equipped with an étale morphism $\phi\colon \fU\rightarrow\fS(n,d,\mathbf m,\varpi)$ as in Definition \ref{def:gsss_formal_scheme}.
Let $D$ be a vertical divisor whose restriction to this chart is given by a function $f=T_0^{l_0}\cdots T_d^{l_d}$. We have
\[\varphi^0_D(x)=l_0\val(T_0(\phi(x)))+\dots+l_d\val(T_d(\phi(x)))\]
on the affinoid domain $\fU_\eta\subset\fXe$.
Therefore, the map $\tau$ restricted to $\fU_\eta$ is given by
\begin{equation}\label{eq:tau_local}
\tau\colon x\mapsto\left(\val T_0(x),\dots,\val T_d(x)\right) .
\end{equation}
The image of $\tau$ restricted to $\fU_\eta$ is the simplex given by the equation $\sum m_i x_i = 1$ in $\R^{d+1}_{\ge 0}$.
Now the statements (i) and (ii) are easily verified on this chart.
\end{proof}

\begin{rem}\label{rem:embedding}
Proposition \ref{prop:map_to_Clemens}(i) gives a canonical embedding 
\[\SX \subset \Div_0(\fX)_\R^*\simeq\R^{\IX}.\]
From now on, we always regard the Clemens polytope $\SX$ as being embedded in $\Div_0 (\fX)_\R^*$.
\end{rem}

We regard the Clemens polytope $\SX$ as the tropicalization of the $k$-analytic space $X$ with respect to the formal model $\fX$.
The following proposition shows the functoriality of tropicalization. (Compare \cite{Kontsevich_Non-archimedean_2002,Baker_Nonarchimedean_2011,Gubler_Skeletons_2014}.)

\begin{prop}\label{prop:functoriality_of_Clemens}
Let $\fX$, $\fY$ be two SNC formal schemes over $k^\circ$ and let $\SX$, $S_\fY$ be the corresponding Clemens polytopes.
Let $\tau_\fX\colon \fXe\rightarrow \SX$ and $\tau_\fY\colon \fYe\rightarrow S_\fY$ be the maps defined in Proposition \ref{prop:map_to_Clemens}.
Let $\ff\colon \fX\rightarrow\fY$ be a morphism of formal schemes. There exists a continuous map $S_\ff\colon \SX\rightarrow S_\fY$ such that the diagram
\begin{equation}
\begin{tikzcd}\label{eq:functoriality_of_Clemens}
\fXe \arrow{r}{\ff_\eta} \arrow{d}{\tau_\fX} & \fYe \arrow{d}{\tau_\fY} \\
\SX \arrow{r}{S_{\ff}} & S_\fY
\end{tikzcd}
\end{equation}
commutes. Moreover, the map $S_{\ff}$ is affine on every simplicial face of $\SX$.
\end{prop}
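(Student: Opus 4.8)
The plan is to exhibit $S_{\ff}$ as the restriction to $\SX$ of a genuine linear map between the ambient dual spaces $\Div_0(\fX)_\R^*$ and $\Div_0(\fY)_\R^*$, induced by pullback of vertical divisors along $\ff$; here, as is implicit in the statement, I take $\ff$ to be a morphism of formal $\kc$-schemes, so that $\ff^*\varpi=\varpi$ for a uniformizer $\varpi$ of $k$. First I would construct an $\R$-linear pullback map $\ff^*\colon\Div_0(\fY)_\R\to\Div_0(\fX)_\R$. Let $E$ be an effective vertical Cartier divisor on $\fY$, given locally as $\Div(u)$ with $u$ a non-zero-divisor vanishing only along $\fYs$; then $\varpi^N\in(u)$ for some $N\ge 1$, whence $\varpi^N\in(\ff^*u)$, so $\ff^*u$ vanishes only along $\fXs$ and, $\varpi^N$ being a non-zero-divisor on the $\kc$-flat formal scheme $\fX$, the function $\ff^*u$ is itself a non-zero-divisor. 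Thus $\Div(\ff^*u)$ glues to an effective vertical Cartier divisor $\ff^*E$ on $\fX$; extending $\Q$-linearly (every class in $\Div_0(\fY)_\Q$ becomes effective after adding a suitable multiple of $\Div(\varpi)$) and then $\R$-linearly yields $\ff^*$. By construction, computing both sides locally as $\val\bigl(u(\ff_\eta(x))\bigr)$, one has
\[\varphi^0_{\ff^*E}(x)=\varphi^0_E\bigl(\ff_\eta(x)\bigr)\qquad\text{for all }E\in\Div_0(\fY)_\R,\ x\in\fXe.\]

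Next I would dualize. Let $\lambda_{\ff}\colon\Div_0(\fX)_\R^*\to\Div_0(\fY)_\R^*$ be the transpose of $\ff^*$, so that $\langle\lambda_{\ff}(\xi),E\rangle=\langle\xi,\ff^*E\rangle$. Combining the defining formula $\langle\tau_\fX(x),D\rangle=\varphi^0_D(x)$ of Proposition \ref{prop:map_to_Clemens} with the identity just displayed, for every $x\in\fXe$ and $E\in\Div_0(\fY)_\R$ one computes
\[\langle\lambda_{\ff}(\tau_\fX(x)),E\rangle=\langle\tau_\fX(x),\ff^*E\rangle=\varphi^0_{\ff^*E}(x)=\varphi^0_E\bigl(\ff_\eta(x)\bigr)=\langle\tau_\fY(\ff_\eta(x)),E\rangle,\]
hence $\lambda_{\ff}\circ\tau_\fX=\tau_\fY\circ\ff_\eta$. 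Since $\tau_\fX$ is surjective onto $\SX$ by Proposition \ref{prop:map_to_Clemens}(i) and $\tau_\fY$ takes values in $\SY$, the linear map $\lambda_{\ff}$ carries $\SX$ into $\SY$, and I would define $S_{\ff}:=\lambda_{\ff}|_{\SX}$. It is continuous as a restriction of a linear map; it makes the diagram \eqref{eq:functoriality_of_Clemens} commute, by the relation just shown; and it is affine on every simplicial face of $\SX$, being the restriction of a linear map to the ambient vector space $\Div_0(\fX)_\R^*$, inside which $\SX$ and all of its faces are linearly embedded (Remark \ref{rem:embedding}).

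The only genuinely technical point — and thus the main, though minor, obstacle — is the well-definedness of $\ff^*$ on vertical divisors: one must check that a local equation of a vertical divisor on $\fY$ pulls back to a non-zero-divisor on $\fX$ supported on the special fiber, which is exactly where $\kc$-flatness of $\fX$ and the equality $\ff^*\varpi=\varpi$ intervene. I remark that, beyond furnishing the maps $\tau_\fX,\tau_\fY$ and the linear embeddings of the Clemens polytopes, the SNC hypothesis is not otherwise used: once Proposition \ref{prop:map_to_Clemens} is available, the whole argument is purely formal.
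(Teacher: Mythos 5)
Your proposal is correct and follows essentially the same route as the paper's proof: pull back vertical divisors along $\ff$, dualize to obtain a linear map $\Div_0(\fX)_\R^*\to\Div_0(\fY)_\R^*$, and reduce the commutativity to the identity $\varphi^0_{\ff^*D}(x)=\varphi^0_D(\ff_\eta(x))$, which locally is just $\val\bigl((u\circ\ff)(x)\bigr)=\val\bigl(u(\ff_\eta(x))\bigr)$. The only difference is that you spell out the well-definedness of the pullback of vertical Cartier divisors and the surjectivity argument showing $\SX$ is carried into $S_\fY$, points the paper leaves implicit.
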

\begin{proof}
The pullback map $\ff^*\colon \Div_0(\fY)_\R\rightarrow\Div_0(\fX)_\R$ induces a linear map \[S_\ff\colon \Div_0(\fX)_\R^*\rightarrow\Div_0(\fY)_\R^*\] by duality. By the canonical embeddings $\SX\subset\Div_0(\fX)^*_\R$ and $S_\fY\subset\Div_0(\fY)^*_\R$ (cf.\ \cref{rem:embedding}), it suffices to show the commutativity of the following diagram
\begin{equation*}
\begin{tikzcd}
\fXe \arrow{r}{\ff_\eta} \arrow{d}{\tau_\fX} & \fYe \arrow{d}{\tau_\fY} \\
\Div_0(\fX)^*_\R \arrow{r}{S_{\ff}} & \Div_0(\fY)^*_\R .
\end{tikzcd}
\end{equation*}
In other words, it suffices to show that
\begin{equation}\label{eq:functoriality_of_Clemens_proof}
\varphi^0_{\ff^*(D)}(x)=\varphi^0_D(\ff_\eta(x))
\end{equation}
for any point $x\in\fXe$ and any effective vertical divisor $D\in \Div_0(\fY)$. Let $\pi_\fY\colon \fYe\rightarrow\fYs$ denote the reduction map. Assume that the divisor $D$ is cut out by some function $u$ near the point of reduction $\pi_\fY(\ff_\eta(x))$. Then the pullback $\ff^*(D)$ is cut out by $\ff^*u = u\circ \ff$. 
So \cref{eq:functoriality_of_Clemens_proof} is equivalent to the obvious identity
\[\val\big((u\circ \ff)(x)\big)=\val\big(u(\ff_\eta(x))\big) .\]
\end{proof}

\section{Metrizations and Kähler structures} \label{sec:virtual_line_bdles}

In order to make sure that the moduli space of stable maps is of finite type, we need to impose certain positivity conditions on the target space.
The definitions in this section follow mainly the suggestions by Kontsevich and Tschinkel in \cite{Kontsevich_Non-archimedean_2002} (see also \cite{Zhang_Positive_line_bundles_1995,Gubler_Local_heights_1998,Chambert-Loir_Mesures_2006,Chambert-Loir_Heights_2011,Boucksom_Singular_2011}).
The idea is to define structures on a \kanal space via structures on a Clemens polytope.

Let $X$ be a $k$-analytic space and $\fX$ an SNC formal model of $X$.
Let $\mathit{Simp}_\fX^\text{pre}$ be the presheaf on the Clemens polytope $\SX$, such that for every open subset $U$ of $\SX$, $\Simp^\mathrm{pre}_\fX(U)$ is the set of the restrictions to $U$ of all simple functions on $\SX$.
Let $\mathit{Simp}_\fX$ denote the associated sheaf, with respect to the usual topology on $\SX$.

\begin{defin}
For a smooth variety $V$ defined over the residue field $\tilde k$, let $\Div(V)$ denote the abelian group of divisors in $V$, and let $\Div^0(V)$ be the subgroup consisting of divisors whose intersection number with any proper curve in $V$ is zero.
Put $N^1(V)=\Div(V)/\Div^0(V)$ and $N^1(D_I)_\R=N^1(D_I)\otimes\R$.
An element $D\in N^1(V)$ is said to be \emph{nef} if its intersection number with any proper curve in $V$ is non-negative.
It is said to be \emph{ample} if it has a representative which is an ample divisor.
\end{defin}

\begin{defin}
Let $\Delta^I, I\subset\IX$ be a face of the Clemens polytope $\SX$.
For any simple function $\varphi$ on $\SX$, we define the \emph{derivative} of $\varphi$ with respect to $\Delta^I$
\[\partial_I \varphi = \sum_{i\in\IX}\mult_i\cdot\varphi(i)\cdot [D_i]|_{D_I} \in N^1(D_I)_\R ,\]
where $[D_i]$ denotes the divisor class of $D_i$.
The function $\varphi$ is said to be \emph{linear} (resp.\ \emph{convex}, \emph{strictly convex}) along the open simplicial face\footnote{An open simplicial face is by definition the relative interior of a face of the simplicial complex.} $(\Delta^I)^\circ$ corresponding to $I$ if $\partial_I \varphi$ is trivial (resp.\ nef, ample) in $N^1(D_I)_\R$.
Let $\mathit{LinLoc}(\varphi)$ (resp.\ $\mathit{ConvLoc}(\varphi)$, $\mathit{SConvLoc}(\varphi)$) be the union of the open simplicial faces in $\SX$ along which $\varphi$ is linear (resp.\ convex, strictly convex). For any subset $U\subset \SX$, the restriction $\varphi|_{U}$ is said to be \emph{linear} (resp.\ \emph{convex}, \emph{strictly convex}) if $\mathit{LinLoc}(\varphi)$ (resp.\ $\mathit{ConvLoc}(\varphi)$, $\mathit{SConvLoc}(\varphi)$) contains $U$.
\end{defin}

We denote by $\mathit{Lin}_\fX$ (resp.\ $\mathit{Conv}_\fX$, $\mathit{SConv}_\fX$) the subsheaf of $\mathit{Simp}_\fX$ whose germs are germs of linear (resp.\ convex, strictly convex) functions. The sheaf $\mathit{Lin}_\fX$ acts on the sheaf $\mathit{Simp}_\fX$ (resp.\ $\mathit{Conv}_\fX$, $\mathit{SConv}_\fX$) via additions
\[ \psi \longmapsto (\varphi\mapsto \varphi+\psi),\]
where $\psi$ is a local section of $\mathit{Lin}_\fX$ and $\varphi$ is a local section of $\mathit{Simp}_\fX$ (resp.\ $\mathit{Conv}_\fX$, $\mathit{SConv}_\fX$).

\begin{defin}
A \emph{virtual line bundle} $L$ on the $k$-analytic space $X$ with respect to the formal model $\fX$ is a torsor over the sheaf $\mathit{Lin}_\fX$. A \emph{simple} (resp.\ \emph{convex}, \emph{strictly convex}) \emph{metrization} $\hL$ of a virtual line bundle $L$ is a global section of the sheaf $\mathit{Simp}_\fX\otimes L$ (resp.\ $\mathit{Conv}_\fX\otimes L$, $\mathit{SConv}_\fX\otimes L$), where the tensor product is taken over the sheaf $\mathit{Lin}_\fX$.
\end{defin}

\begin{defin}\label{def:kahler}
A \emph{Kähler structure} $\hL$ on $X$ with respect to the formal model $\fX$ is a virtual line bundle $L$ on $X$ with respect to $\fX$ equipped with a strictly convex metrization $\widehat L$.
\end{defin}

In concrete terms, a simple metrization $\hL$ gives for each $i\in\IX$, a germ of a simple function $\varphi_i$ at the vertex $i$ of $\SX$ up to addition by linear functions. So we obtain a collection of numerical classes $\partial_i\varphi_i\in N^1(D_i)_\R$ for every $i\in\IX$.

\begin{defin}\label{def:curvature}
We call this collection of numerical classes $\partial_i\varphi_i\in N^1(D_i)_\R$ for every $i\in\IX$ the \emph{curvature} of the metrized virtual line bundle $\hL$, which we denote by $c(\hL)$.
\end{defin}

\begin{lem}\label{lem:compatibility_of_curvature}
The curvature $c(\widehat L)$ has the following compatibility property: for any simplicial face $\Delta^I, I\subset \IX$ and any two vertices $i,j\in I$, we have
\[(\partial_i \varphi_i)|_{D_I}=(\partial_j \varphi_j)|_{D_I}.\]
\end{lem}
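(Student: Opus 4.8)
The plan is to show that both $(\partial_i\varphi_i)|_{D_I}$ and $(\partial_j\varphi_j)|_{D_I}$ equal a single class in $N^1(D_I)_\R$ attached to the metrization $\hL$ along the face $\Delta^I$. The heart of the matter is a formal computation. Let $\varphi$ be a simple function defined on an open subset of $\SX$ containing the vertex $i$. Since $[D_k]|_{D_i}=0$ unless $D_i\cap D_k\neq\emptyset$ --- equivalently unless $\{i,k\}$ is a face of $\SX$ --- and since for such $k$ the number $\varphi(k)$ is pinned down by the affine restriction of $\varphi$ to the edge $\Delta^{\{i,k\}}$, the class $\partial_i\varphi=\sum_k\mult_k\,\varphi(k)\,[D_k]|_{D_i}\in N^1(D_i)_\R$ is well defined. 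As $i\in I$ we have $D_I\subseteq D_i$, and restriction of numerical classes is transitive, so $(\partial_i\varphi)|_{D_I}=\sum_k\mult_k\,\varphi(k)\,[D_k]|_{D_I}$. Here $[D_k]|_{D_I}$ vanishes unless $D_I\cap D_k\neq\emptyset$, i.e.\ unless $I\cup\{k\}$ is a face of $\SX$, and for those $k$ the value $\varphi(k)$ is determined by $\varphi$ near $\Delta^I$. Hence the surviving sum is manifestly independent of which vertex of $\Delta^I$ one starts from: it is exactly $\partial_I\varphi$. In other words, for any single simple function $\varphi$ defined near $\Delta^I$,
\[(\partial_i\varphi)|_{D_I}=\partial_I\varphi=(\partial_j\varphi)|_{D_I}.\]

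Next I would feed the metrization into this. By definition $\hL$ is a global section of the tensor product $\Simp_\fX\otimes_{\Lin_\fX}L$, so near each point it is represented by a simple function, well defined up to a local section of $\Lin_\fX$; and a section of $\Lin_\fX$ is, by construction, linear along every open face it meets, hence has vanishing derivative along each such face. In particular, if $\varphi_i$ represents $\hL$ near the vertex $i$, then $\partial_i\varphi_i$ does not depend on the choice of $\varphi_i$ --- this is precisely the curvature $c(\hL)$ at $i$ --- and likewise for $j$. Thus it suffices to produce a representative of $\hL$ valid near both $i$ and $j$ (equivalently, to see that $\partial_I\varphi$ above does not depend on the chosen local representative), after which $(\partial_i\varphi_i)|_{D_I}=\partial_I\varphi=(\partial_j\varphi_j)|_{D_I}$ by the previous paragraph. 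The case $i=j$ is of course vacuous.

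I expect this last point to be the only genuine obstacle, and it is a mild one. One should resist the temptation to trivialize $L$ over a small connected neighborhood of $\Delta^I$ by invoking contractibility of $\Delta^I$: the sheaf $\Lin_\fX$ is not locally constant on $\SX$ (its stalks have different dimensions along faces of different dimensions), so this is not automatic. A safe route is to propagate local data along a path: choose a representative $\varphi_0$ of $\hL$ near the vertex $i$, then representatives $\varphi_1,\dots,\varphi_m$ near a sequence of points tracing a path inside the convex --- hence connected --- relative interior $(\Delta^I)^\circ$, and finally $\varphi_{m+1}$ near $j$, with the successive trivializing opens arranged so that consecutive ones overlap in sets that meet $(\Delta^I)^\circ$ (possible because $(\Delta^I)^\circ$ accumulates at each of $i$ and $j$). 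On each overlap the difference of two consecutive representatives is a section of $\Lin_\fX$, hence linear along $(\Delta^I)^\circ$, hence has vanishing $\partial_I$; telescoping along the chain gives $\partial_I\varphi_0=\partial_I\varphi_{m+1}$, and combining with $(\partial_i\varphi_i)|_{D_I}=\partial_I\varphi_0$ and $(\partial_j\varphi_j)|_{D_I}=\partial_I\varphi_{m+1}$ yields the lemma. The remaining items --- that each $\partial_I\varphi_l$ and each relevant value $\varphi_l(k)$ are unambiguous, and the bookkeeping of which $k$ contribute --- are routine consequences of the affine-interpolation remarks made above.
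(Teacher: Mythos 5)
Your proof is correct and follows essentially the same route as the paper's one-line argument: local representatives of $\hL$ differ by sections of $\Lin_\fX$, which are linear along $(\Delta^I)^\circ$ and hence have vanishing $\partial_I$, so the restricted derivatives agree. You merely make explicit two points the paper leaves implicit --- the identity $(\partial_i\varphi)|_{D_I}=\partial_I\varphi$ and the propagation of representatives from $i$ to $j$ through $(\Delta^I)^\circ$ --- and these details check out.
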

\begin{proof}
Since $L$ is a torsor over the sheaf $\mathit{Lin}_\fX$, the simple functions $\varphi_i$ and $\varphi_j$ differ by a simple function which is linear along the open simplicial face $(\Delta^I)^\circ$. Therefore, we obtain the claimed identity.
\end{proof}

\section{Functoriality of curvature}\label{sec:curvature}

Let $\ff\colon \fX\rightarrow\fY$ be a morphism of SNC formal schemes over $k^\circ$.
Let $\Set{D_i|i\in\IX}$ and $\Set{D_j|j\in\IY}$ denote respectively the set of the irreducible components of $\fXs\red$ and $\fYs\red$ with their reduced structures. Let $c(\widehat L)$ be the curvature of a metrized virtual line bundle $\widehat L$ on $\fYe$ with respect to the formal model $\fY$.
By definition, $c(\widehat L)$ is a collection of numerical classes $c'_j\in N^1(D_j)_\R$ for every $j\in\IY$.
The morphism $\ff\colon \fX\rightarrow \fY$ induces a morphism of $\tilde k$-schemes $\ff_s\colon \fXs\rightarrow \fYs$ and a morphism $\ff\red_s\colon \fXs\red\rightarrow\fYs\red$ between their reduced structures.
For each $i\in\IX$, assume that the image $\ff\red_s(D_i)$ is contained in some $D_j$ for $j\in\IY$. Let $c_i = (\ff\red_s)^* c'_j \in N^1(D_i)_\R$.
By Lemma \ref{lem:compatibility_of_curvature}, the numerical class $c_i$ does not depend on the choice of $j\in\IY$.
We define the \emph{pullback} $\ff^*_s c(\widehat L)$ of the curvature $c(\widehat L)$ to be the collection of the numerical classes $c_i \in N^1(D_i)_\R$ for all $i\in\IX$.

By Proposition \ref{prop:functoriality_of_Clemens}, the morphism $\ff$ induces a map $S_\ff\colon \SX\rightarrow S_\fY$ which is affine on every simplicial face of $\SX$.

\begin{lem}\label{lem:pullback_of_linear_functions}
The pullback of a simple (resp.\ linear) function on $S_\fY$ by $S_{\ff}$ is a simple (resp.\ linear) function on $\SX$.
\end{lem}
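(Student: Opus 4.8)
The plan is to reduce the statement to a local computation on simplicial faces, using the description of $S_\ff$ given by Proposition \ref{prop:functoriality_of_Clemens} together with the definition of derivatives $\partial_I$. First I would recall that by construction $S_\ff$ is the dual of the pullback map $\ff^*\colon\Div_0(\fY)_\R\to\Div_0(\fX)_\R$, restricted to the Clemens polytopes under the embeddings of \cref{rem:embedding}. Concretely, if $D_j$ ($j\in\IY$) are the irreducible components of $\fYs\red$ and $D_i$ ($i\in\IX$) those of $\fXs\red$, then $\ff^*D_j$ is a vertical divisor on $\fX$, hence a $\Z$-linear combination $\sum_i a_{ij}D_i$ with $a_{ij}\ge 0$; the map $S_\ff$ on coordinates is $(x_i)_{i\in\IX}\mapsto\big(\sum_i a_{ij}\mult_i x_i/\mult_j\big)_{j\in\IY}$ up to the normalization coming from the local equations $\sum m_i x_i=1$ in \cref{eq:tau_local}. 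In any case, the key structural fact is that $S_\ff$ is the restriction of a \emph{linear} map on the ambient vector spaces $\Div_0(\fX)_\R^*\to\Div_0(\fY)_\R^*$.

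Next I would treat the ``simple'' assertion. A simple function $\psi$ on $S_\fY$ is, by definition, a function on $S_\fY\subset\Div_0(\fY)_\R^*$ which is affine on each face $\Delta^J$. Since $S_\ff$ maps each face $\Delta^I$ of $\SX$ affinely into a single face $\Delta^J$ of $S_\fY$ (the face spanned by those $j$ with $D_j\supset\ff\red_s(D_I)$ — this is exactly the combinatorial content of Proposition \ref{prop:functoriality_of_Clemens}, since $\tau_\fY\circ\ff_\eta=S_\ff\circ\tau_\fX$ forces the image of a stratum's tropicalization into the appropriate subcomplex), the composite $\psi\circ S_\ff$ restricted to $\Delta^I$ is a composite of an affine map with an affine function, hence affine. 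As this holds for every face $\Delta^I$ of $\SX$, the pullback $S_\ff^*\psi=\psi\circ S_\ff$ is simple on $\SX$.

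For the ``linear'' assertion I would argue via the derivative criterion: $\psi$ linear on $S_\fY$ means $\partial_J\psi=0$ in $N^1(D_J)_\R$ for every face $\Delta^J$, i.e.\ $\sum_{j}\mult_j\,\psi(j)\,[D_j]|_{D_J}=0$. Given a face $\Delta^I$ of $\SX$ mapping into $\Delta^J$, I would compute $\partial_I(S_\ff^*\psi)$ and show it equals $(\ff\red_s|_{D_I})^*\big(\partial_J\psi\big)$, using that the values of $S_\ff^*\psi$ at the vertices of $I$ are governed by the coefficients $a_{ij}$ and that these coefficients encode precisely $\ff\red_s{}^*[D_j]=\sum_i a_{ij}[D_i]$ on the relevant strata (compatibility of intersection-theoretic pullback with the combinatorics of $\ff^*$ on vertical divisors, together with the normalization by multiplicities). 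Since pullback of the zero class is zero, $\partial_I(S_\ff^*\psi)=0$ for all $I$, so $S_\ff^*\psi$ is linear. The main obstacle I anticipate is the bookkeeping in this last step: matching the numerical identity $\partial_I(S_\ff^*\psi)=(\ff\red_s)^*\partial_J\psi$ requires carefully relating the multiplicities $\mult_i$, $\mult_j$, the coefficients $a_{ij}$ of $\ff^*D_j=\sum a_{ij}D_i$, and the normalizations in the local charts $\fS(n,d,\mathbf m,\varpi)$ — essentially the same computation that underlies the definition of $\ff_s^*c(\hL)$ above, so I would expect to borrow that normalization verbatim and phrase the proof as: reduce to a face, express both sides in vertex values, invoke functoriality of divisor-class pullback on the stratum $D_I\to D_J$.
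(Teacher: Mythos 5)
Your proof is correct and takes essentially the same route as the paper: the simple case follows from $S_\ff$ being affine on each face (Proposition \ref{prop:functoriality_of_Clemens}), and the linear case from the identity $\partial_I(S_\ff^*\varphi)=(\ff\red_s|_{D_I})^*(\partial_J\varphi)$, which the paper isolates as Lemma \ref{lem:functoriality_of_derivative} and proves, exactly as you propose, by reducing via linearity to the functoriality of divisor pullback along $D_I\to D_J$ — so the coefficient bookkeeping you anticipate never actually materializes. The explicit coordinate formula you write for $S_\ff$ (with its multiplicity factors) is not quite right as stated, but nothing in your argument depends on it, since only the fact that $S_\ff$ restricts a linear map and sends faces into faces is used.
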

\begin{proof}
Let $\varphi$ be a simple function on $S_\fY$ and let $S_{\ff}^*\varphi$ be the pullback of $\varphi$ by $S_{\ff}$.
The fact that $S_{\ff}$ is affine on every simplicial face of $\SX$ implies that $S_{\ff}^*\varphi$ is a simple function on $\SX$.
For any $I\subset\IX$, let $D_I=\bigcap_{i\in I}D_i$ denote the corresponding stratum of $\fXs\red$ with its reduced structure. Suppose that the image $\ff\red_s(D_I)$ is contained in the stratum $D_J$ of $\fY\red_s$ for some $J\subset\IY$. Let $(\ff\red _s|_{D_I})^*(\partial_J\varphi)$ denote the pullback of $\partial_J\varphi$ by $\ff\red _s|_{D_I}\colon D_I\rightarrow D_J$.
It suffices to prove the following lemma.
\end{proof}

\begin{lem}\label{lem:functoriality_of_derivative}
We have
\[(\ff\red _s|_{D_I})^*(\partial_J\varphi) = \partial_I (S_{\ff}^* \varphi) .\]
\end{lem}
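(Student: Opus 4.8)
The plan is to verify the identity by unwinding both sides on divisor classes and reducing to a computation about pullbacks of components of the special fiber. Write $\varphi = \sum_{j\in\IY} \varphi(j)\cdot e_j$ in terms of the vertex values, so that by definition $\partial_J\varphi = \sum_{j\in\IY}\mult_j\cdot\varphi(j)\cdot[D_j]|_{D_J}$. Since $S_\ff$ is affine on every simplicial face (Proposition \ref{prop:functoriality_of_Clemens}), the pulled-back function $S_\ff^*\varphi$ is simple on $\SX$, and its vertex value at $i\in I$ is $\varphi(S_\ff(i))$; more usefully, $S_\ff^*\varphi$ is a linear combination $\sum_{i\in\IX} a_i\cdot e_i$ where the coefficients $a_i$ are determined by how $S_\ff$ sends the vertices of $\SX$ into $S_\fY$. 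Then $\partial_I(S_\ff^*\varphi) = \sum_{i\in\IX}\mult_i\cdot a_i\cdot[D_i]|_{D_I}$. So the lemma amounts to identifying $\sum_i \mult_i a_i [D_i]|_{D_I}$ with $(\ff\red_s|_{D_I})^*\big(\sum_j \mult_j\varphi(j)[D_j]|_{D_J}\big)$.

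The key geometric input I would extract is the relation between the pullback of the vertical divisor $D_j$ on $\fY$ and the vertical divisors $D_i$ on $\fX$. Locally on the étale charts $\fU\to\fS(n,d,\mathbf m,\varpi)$ of Definition \ref{def:gsss_formal_scheme}, the irreducible components of the special fiber are cut out by the coordinate functions $T_i$ with multiplicities $m_i$, and $\ff^*T_j$ is a monomial in the $T_i$; comparing valuations, the coefficient $a_i$ in $S_\ff^*\varphi$ records exactly the order to which the pullback $\ff^*D_j$ contains $D_i$. In other words, one gets the divisor-level identity $\ff^*\big(\sum_j \mult_j\varphi(j) D_j\big) = \sum_i \mult_i a_i D_i$ as Cartier divisors on $\fX$ supported on the special fiber — this is really just the chart computation already done in the proof of Proposition \ref{prop:map_to_Clemens} and the description of $S_\ff$ via $\ff^*$ on $\Div_0$. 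Restricting this identity of vertical divisors to the stratum $D_I$ and using that $\ff\red_s$ maps $D_I$ into $D_J$, flat (or proper) pullback of divisor classes commutes with restriction to the stratum, yielding $\sum_i \mult_i a_i[D_i]|_{D_I} = (\ff\red_s|_{D_I})^*\big(\sum_j \mult_j\varphi(j)[D_j]|_{D_J}\big)$, which is the claim.

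Two points of care. First, one must check that $(\ff\red_s|_{D_I})^*$ on $N^1$ is well-defined on the relevant classes — i.e.\ that pulling back a numerical class along $D_I\to D_J$ makes sense; here one uses that these are restrictions of Cartier divisor classes on the ambient $\fXs\red$, $\fYs\red$, so ordinary pullback of line bundles suffices and no flatness of $\ff\red_s|_{D_I}$ itself is needed. Second, one must make sure the chart-level identity of vertical divisors globalizes: the charts $\fU$ are étale over the standard models $\fS(n,d,\mathbf m,\varpi)$, and on overlaps the monomial descriptions are compatible because they are pulled back from the base, so the local identities $\ff^*D_j = \sum_i (\text{mult})\,D_i$ patch to a global identity of Cartier divisors on $\fX$. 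I expect this globalization/compatibility check to be the main obstacle: it is essentially bookkeeping, but it requires being careful that the multiplicities $\mult_i$, $\mult_j$ and the affine-on-faces coefficients of $S_\ff$ are matched up correctly across charts, and that the normalization $\sum_i m_i x_i = 1$ defining the simplex is respected under $S_\ff$. Once the global divisor identity is in hand, restricting to $D_I$ and invoking functoriality of $[\,\cdot\,]|_{(-)}$ under pullback finishes the proof.
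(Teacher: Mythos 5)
Your proposal is correct and takes essentially the same route as the paper: by linearity one reduces to the class of a vertical divisor $D=\sum_j\mult_j\varphi(j)D_j$, identifies $S_{\ff}^*\varphi$ with the simple function attached to $\ff^*D$ (this duality is exactly what Proposition \ref{prop:functoriality_of_Clemens} already provides, so the chart-by-chart globalization you flag as the main obstacle is not an extra step), and then concludes from the compatibility of pullback of divisor classes with restriction to strata, i.e.\ the commutative square $D_I\to D_J$, $\fX\to\fY$. The paper's proof is precisely this two-line reduction plus the commutative diagram; your only loose phrasings (e.g.\ ``$\ff^*T_j$ is a monomial'' rather than ``has vertical divisor'', and the ambient spaces for the Cartier classes being $\fX$, $\fY$ rather than $\fXs\red$, $\fYs\red$) do not affect the argument.
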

\begin{proof}
By linearity and by the definition of the derivatives $\partial_I$, $\partial_J$, it suffices to show that for any effective vertical divisor $D\in\Div_0(\fY)$, we have
\[ (\ff\red _s|_{D_I})^* \big([D]|_{D_J}\big) = (\ff^*[D])|_{D_I} .\]
The identity above follows from the commutative diagram
\begin{equation*}
\begin{tikzcd}
D_I \arrow{r}{\ff\red_s} \arrow[hook]{d}& D_J \arrow[hook]{d} \\
\fX \arrow{r}{\ff} & \fY .
\end{tikzcd}
\end{equation*}
\end{proof}

Lemma \ref{lem:pullback_of_linear_functions} ensures that the notions of pullback of virtual lines bundles and pullback of their metrizations are well-defined.
Moreover, Lemma \ref{lem:functoriality_of_derivative} implies the following functoriality of curvature for metrized virtual line bundles.

\begin{prop}\label{prop:functoriality_of_curvature}
Let $\ff\colon \fX\rightarrow\fY$ be a morphism of SNC formal schemes over $k^\circ$, $L$ a virtual line bundle on $\fYe$ with respect to the formal model $\fY$, and $\hL$ a metrization of $L$.
The pullback of the curvature of $\widehat L$ equals the curvature of the pullback of $L$, i.e.
\[\ff^*_s c(\hL) = c(\ff^* (\widehat L)) .\]
\end{prop}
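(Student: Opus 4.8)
The plan is to reduce the statement to the identity already established in Lemma~\ref{lem:functoriality_of_derivative}, by unwinding the definitions of curvature and of pullback curvature in terms of local simple functions. Recall that a metrization $\hL$ of the virtual line bundle $L$ on $\fYe$ consists, for each $j\in\IY$, of a germ at the vertex $j$ of a simple function $\varphi_j$ on $S_\fY$, well-defined up to addition of a linear function along the star of $j$, and the curvature is the collection $c'_j = \partial_j\varphi_j \in N^1(D_j)_\R$. Likewise, the pullback metrization $\ff^*\hL$ is, by the construction in \cref{sec:curvature} (which is legitimate thanks to Lemma~\ref{lem:pullback_of_linear_functions}), given at each vertex $i\in\IX$ by the germ of $S_\ff^*\varphi_{j}$, where $j\in\IY$ is any index with $\ff^\mathrm{red}_s(D_i)\subset D_j$; its curvature is $\partial_i(S_\ff^*\varphi_j)$.

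So the statement to prove is precisely that, for every $i\in\IX$, choosing $j\in\IY$ with $\ff^\mathrm{red}_s(D_i)\subset D_j$, one has
\[
(\ff^\mathrm{red}_s|_{D_i})^* (\partial_j \varphi_j) \;=\; \partial_i\big(S_\ff^* \varphi_j\big)
\]
as elements of $N^1(D_i)_\R$. But this is exactly Lemma~\ref{lem:functoriality_of_derivative} applied with $I=\{i\}$ and $J=\{j\}$: the hypothesis of that lemma, that $\ff^\mathrm{red}_s(D_I)$ lie in the stratum $D_J$, holds by the choice of $j$, and the conclusion $(\ff^\mathrm{red}_s|_{D_I})^*(\partial_J\varphi) = \partial_I(S_\ff^*\varphi)$ specializes to the displayed equation. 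This gives both that the left-hand side is independent of the chosen $j$ (which is also guaranteed a priori by Lemma~\ref{lem:compatibility_of_curvature}) and that it agrees with the $i$-th component of $c(\ff^*\hL)$.

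Finally I would remark on well-definedness: the germ $\varphi_j$ is only defined modulo linear functions at $j$, but Lemma~\ref{lem:pullback_of_linear_functions} shows $S_\ff$ sends linear functions to linear functions, so $S_\ff^*\varphi_j$ is well-defined modulo linear functions at $i$, and $\partial_i$ annihilates linear germs by definition; hence every term in the argument is unambiguous. Assembling the componentwise equalities over all $i\in\IX$ yields $\ff^*_s c(\hL) = c(\ff^*\hL)$. The only genuine content is Lemma~\ref{lem:functoriality_of_derivative}, whose proof in turn rests on the elementary compatibility $(\ff^\mathrm{red}_s|_{D_I})^*([D]|_{D_J}) = (\ff^*[D])|_{D_I}$ coming from the commutative square relating $D_I\hookrightarrow\fX$ and $D_J\hookrightarrow\fY$; so there is no real obstacle here beyond bookkeeping, the main point being to check that the chain of definitions (metrization $\leadsto$ germs $\varphi_j$; pullback metrization $\leadsto$ germs $S_\ff^*\varphi_j$; curvature $\leadsto$ $\partial$) lines up so that Lemma~\ref{lem:functoriality_of_derivative} applies verbatim.
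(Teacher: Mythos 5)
Your proposal is correct and follows exactly the route the paper takes: the paper states that Lemma~\ref{lem:pullback_of_linear_functions} makes the pullbacks well-defined and that Proposition~\ref{prop:functoriality_of_curvature} is implied by Lemma~\ref{lem:functoriality_of_derivative}, which is precisely your reduction (applied with $I=\{i\}$, $J=\{j\}$), with Lemma~\ref{lem:compatibility_of_curvature} handling independence of the choice of $j$. You simply spell out the bookkeeping that the paper leaves implicit.
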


\section{Degree of virtual line bundles on curves}\label{sec:virtual_line_bundles_on_a_curve}

Let $X$ be a connected proper smooth $k$-analytic curve and let $\fX$ be an SNC formal model of $X$.
In this case, the Clemens polytope $\SX$ is a finite connected simple graph.

We fix an order on the set of vertices $I_\fX$. For two vertices $i$ and $j$, we write $i\mbox{---}j$ if $i$ and $j$ are connected by an edge. We write $i \prec j$ if $i$ and $j$ are connected by an edge and if $i$ is inferior to $j$ with respect to the fixed order. For every vertex $i$ of $\SX$, let $U_i$ be the union of the vertex $i$ and all the open edges whose closures contain $i$. It is an open neighborhood of the vertex $i$. If $e_{ij}$ is an edge with two endpoints $i$ and $j$, then $U_i\cap U_j$ is the interior of the edge $e_{ij}$.

\begin{lem}\label{lem:acyclicity}
We have
\begin{align}
\label{eq:cohomology1} &H^0\big(\mathit{Lin}_{\fX}|_{U_i}\big)\simeq\R^{d(i)},\quad H^q\big(\mathit{Lin}_{\fX}|_{U_i}\big)=0\\
\label{eq:cohomology2} &H^0\big(\mathit{Lin}_{\fX}|_{U_j\cap U_k}\big)\simeq\R\oplus\R, \quad H^q\big(\mathit{Lin}_{\fX}|_{U_j\cap U_k}\big)=0
\end{align}
for any $q\geq 1$, any vertex $i$, and any pair of vertices $j$, $k$ such that $j\prec k$. Here $d(i)$ denotes the number of edges connected to the vertex $i$.
\end{lem}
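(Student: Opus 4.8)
The plan is to compute the sheaf cohomology of $\mathit{Lin}_\fX$ on the opens $U_i$ and $U_j \cap U_k$ directly from the local description of the Clemens polytope near a vertex. First I would describe $U_i$ explicitly: it is a star-shaped neighbourhood, namely the vertex $i$ together with $d(i)$ half-open edges $e_{i\ell}^\circ \cup \{i\}$ emanating from it. This is contractible, so a naive guess is that the cohomology is that of a point; the content of the lemma is to identify $H^0$ correctly and to verify that the sheaf $\mathit{Lin}_\fX$ is indeed acyclic despite being a sheaf of $\R$-vector spaces that is \emph{not} locally constant (its stalk rank jumps at vertices). The identification of $H^0(\mathit{Lin}_\fX|_{U_i})$ with $\R^{d(i)}$ comes from unwinding the definition of a linear simple function: along each edge $e_{i\ell}$ a simple function is affine, hence determined by its two endpoint values, and the linearity condition $\partial_I \varphi = 0$ along each open edge imposes one linear relation per edge; a global section on $U_i$ is a compatible family of such affine functions on the edges agreeing at $i$, so it is cut out inside $\R^{d(i)+1}$ (values at $i$ and at the $d(i)$ outer "ends", interpreted as germs) by the $d(i)$ edge-relations, and after accounting for the common value at $i$ one is left with a $d(i)$-dimensional space. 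For $U_j \cap U_k$, which is a single open edge, a linear simple function is affine on that edge, hence a $2$-dimensional space of germs — but one must be a little careful: the relevant sheaf restricted to an open edge has stalk $\R^2$ at interior points (slope and one value, or equivalently the two endpoint germ-values), so $H^0 \simeq \R \oplus \R$ and higher cohomology vanishes since an open interval has cohomological dimension zero for any sheaf (it is a $1$-manifold without boundary that deformation retracts, and more to the point the sheaf is soft there).

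For the vanishing $H^q = 0$, $q \geq 1$, I would use a Čech or Mayer–Vietoris argument. Cover $U_i$ by the $d(i)$ pieces $V_\ell := \{i\} \cup e_{i\ell}^\circ$, each of which is a half-open interval. On each $V_\ell$ the sheaf $\mathit{Lin}_\fX$ restricts to (the pushforward of) a locally constant sheaf on the open edge glued to the stalk $\mathit{Lin}_{\fX,i}$ at the endpoint $i$; such a sheaf on a half-open interval is acyclic in positive degrees — this is the basic fact that a half-open interval is a "point up to homotopy" for sheaf cohomology, but since the sheaf is not locally constant I would instead cite that the restriction is a \emph{soft} (indeed flabby after the obvious extension) sheaf on a $1$-dimensional paracompact space, or run the explicit Čech computation: any cocycle on the nerve of the edge-subdivision is a coboundary because affine functions on subintervals patch. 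The pairwise intersections $V_\ell \cap V_{\ell'}$ for $\ell \neq \ell'$ all equal the single point $\{i\}$ (an interior point of no edge, where the stalk is $\R$), and triple intersections are the same point; the Čech complex for this cover therefore has the explicit form $H^0(\mathit{Lin}_\fX|_{\{i\}})$-terms in the higher spots, and a direct count shows the Čech cohomology is concentrated in degree $0$ with the stated value, while the spectral sequence from the cover (each $V_\ell$ and each intersection being acyclic) degenerates to give $H^q(U_i) = \check H^q$. For $U_j \cap U_k$ no cover is needed; it is a single interval.

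The main obstacle, and the step I would spend the most care on, is the vanishing of $H^1$ (and the correct $H^0$ count) in the presence of the stalk jump of $\mathit{Lin}_\fX$ at the vertex $i$: one cannot simply invoke "$U_i$ is contractible" because $\mathit{Lin}_\fX$ is not locally constant. The cleanest route is to establish that $\mathit{Lin}_\fX$, restricted to any $U_i$, is a \emph{soft} sheaf — concretely, that any linear simple function defined on a closed subset (in the subspace-topology sense, i.e. on a union of closed subedges and the vertex) extends to all of $U_i$, which is immediate since one extends edge by edge and affine functions on a subinterval extend affinely — and that $U_i$ is paracompact of covering dimension $1$; softness then kills $H^{\geq 1}$, and $H^0$ is computed as above by the dimension count. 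The same softness argument handles $U_j \cap U_k$. I would therefore organize the proof as: (1) explicit local model of $U_i$ and $U_j \cap U_k$; (2) the $H^0$ dimension counts via the edge-by-edge description of linearity; (3) softness of $\mathit{Lin}_\fX|_{U_i}$ and $\mathit{Lin}_\fX|_{U_j \cap U_k}$, hence acyclicity; with the Čech computation kept in reserve as an alternative to (3) if a more hands-on argument is preferred.
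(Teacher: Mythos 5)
Your proposal has two genuine problems, one in the $H^0$ count and one in the acyclicity step. For $H^0$: there is no ``linearity relation along each open edge''. On an open edge the relevant stratum $D_{jk}=D_j\cap D_k$ is a finite set of points, so $N^1(D_{jk})_\R=0$ and every simple function is automatically linear there; this is precisely why $\mathit{Lin}_{\fX}|_{U_j\cap U_k}=\mathit{Simp}_{\fX}|_{U_j\cap U_k}$ is the constant sheaf $\R^2$, which gives \eqref{eq:cohomology2}. The one nontrivial condition sits at the vertex $i$ itself, where $D_i$ is a proper curve, $N^1(D_i)_\R\simeq\R$, and $\partial_i\varphi=0$ is a single nonzero linear relation on the germ (value at $i$ plus $d(i)$ slopes), yielding $d(i)+1-1=d(i)$. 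Your count as written (``$\R^{d(i)+1}$ cut out by $d(i)$ edge-relations'') would give $\R$, not $\R^{d(i)}$, and the phrase ``after accounting for the common value at $i$'' does not repair it; moreover the stalk of $\mathit{Lin}_\fX$ at the vertex is $\R^{d(i)}$, not $\R$ as you use in the Čech sketch. The positivity input ($N^1$ of points is $0$, $N^1$ of a proper curve is $\R$) is the actual content of the computation and is absent from your argument.

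The vanishing step is where the proposal breaks down: $\mathit{Lin}_\fX|_{U_i}$ is \emph{not} soft. Global sections over $U_i$ form the $d(i)$-dimensional space determined by the germ at $i$, whereas for a closed set $A\subset U_i$ consisting of disjoint closed subintervals of the edges avoiding the vertex, sections over $A$ form a space of dimension $2\cdot(\text{number of components})$; already for one subinterval and $d(i)=1$ the restriction $\Gamma(U_i)\to\Gamma(A)$ is $\R\to\R^2$ and cannot be surjective. Your extension argument tacitly assumes the closed set contains the vertex and ignores both the matching of values at $i$ and the linearity constraint $\partial_i\varphi=0$ there, which is exactly what obstructs edge-by-edge extension. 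The fallback Čech argument also fails as stated: the pieces $V_\ell=\{i\}\cup e^\circ_{i\ell}$ are not open in $U_i$ (any neighborhood of $i$ meets all adjacent edges), so the cover is a closed one and the usual Čech-to-sheaf-cohomology comparison you invoke does not apply, in addition to the wrong stalk at $i$. By contrast, the paper proves \eqref{eq:cohomology1} by exhibiting the short exact sequence $0\to \mathit{Lin}_{\fX}|_{U_i}\to\bigoplus_{j\mbox{---}i}\iota_{j*}\big(\R^2_{U_i\cap U_j}\big)\to\R^{d(i)}_i\to 0$ (restriction to the adjacent open edges, with skyscraper cokernel of rank $d(i)$ at $i$) and taking the long exact sequence; some such device, or at least a correct direct computation with the vertex stalk $\R^{d(i)}$, is needed in place of the softness claim.
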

\begin{proof}
For a pair of vertices $j\prec k$, the stratum $D_{j,k}=D_j\cap D_k$ is a finite set of points.
Since a point does not have any non-trivial divisors on it, the sheaf $\mathit{Lin}_{\fX}|_{U_j\cap U_k}$ is equal to the sheaf $\mathit{Simp}_{\fX}|_{U_j\cap U_k}$. The latter sheaf is a constant sheaf of 2-dimensional real vector spaces on $U_j\cap U_k$. So we have the isomorphisms in \cref{eq:cohomology2}.

The sheaf $\mathit{Lin}_{\fX}|_{U_i}$ is more complicated. For every vertex $j$ which is connected to $i$ by an edge, we denote by $\iota_j$ the inclusion map $U_i\cap U_j\hookrightarrow U_i$. We have the adjunction morphism
\[\mathit{Lin}_{\fX}|_{U_i} \longrightarrow \iota_{j*}\iota^*_j \big(\mathit{Lin}_{\fX}|_{U_i}\big) \simeq \iota_{j*}\big(\R^2_{U_i\cap U_j}\big).\]
Taking direct sums over all vertices $j$ that are connected to $i$, we obtain a morphism
\[\mathit{Lin}_{\fX}|_{U_i} \longrightarrow\bigoplus_{j\mbox{---}i} \iota_{j*}\big(\R^2_{U_i\cap U_j}\big),\]
whose cokernel is a skyscraper  sheaf of rank $d(i)$ concentrated at vertex $i$.
Let us denote this skyscraper sheaf by $\R^{d(i)}_i$.
We obtain a short exact sequence of constructible sheaves on $U_i$
\[0\longrightarrow \mathit{Lin}_{\fX}|_{U_i} \longrightarrow \bigoplus_{j\mbox{---}i} \iota_{j*}\big(\R^2_{U_i\cap U_j}\big)\longrightarrow\R^{d(i)}_i\longrightarrow 0.\]
The cohomology groups are then easily calculated via the associated long exact sequence, and we obtain the isomorphisms in \eqref{eq:cohomology1}.
\end{proof}

We define a map
\[\mathit{deg}\colon \prod_{j\prec k}H^0\big(\mathit{Lin}_{\fX}|_{U_j\cap U_k}\big)\rightarrow\R\]
as follows.
An element of the left hand side is a collection of linear functions $\Set{\varphi_{jk} | j\prec k}$ on all the open edges.
We put
\begin{equation}\label{eq:def_of_deg}
\mathit{deg}\big(\{\varphi_{jk}\,|\,j\prec k\}\big) = \sum_{j\prec k} \mult_j\cdot\mult_k\cdot \big(\varphi_{jk}(k)-\varphi_{jk}(j)\big).
\end{equation}

\begin{prop}\label{prop:calc_H1}
The map $\mathit{deg}$ defined above induces an isomorphism
\[\deg\colon H^1\big(\mathit{Lin}_\fX \big)\xrightarrow{\ \sim\ }\R .\]
\end{prop}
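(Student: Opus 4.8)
The plan is to compute $H^\bullet(\SX,\mathit{Lin}_\fX)$ from the Čech complex of the open cover $\Set{U_i | i\in\IX}$ and then identify $\mathit{deg}$ on $H^1$. Since $\SX$ is a simple graph, intersections of three or more distinct $U_i$'s are empty and the double intersections are precisely the open edges $U_j\cap U_k$ with $j\prec k$; by \cref{lem:acyclicity} every $U_i$ and every $U_j\cap U_k$ is acyclic for $\mathit{Lin}_\fX$, so the Čech complex computes sheaf cohomology and collapses to the two-term complex $C^0\xrightarrow{\ \delta\ }C^1$ with $C^0=\prod_i H^0(\mathit{Lin}_\fX|_{U_i})$ and $C^1=\prod_{j\prec k}H^0(\mathit{Lin}_\fX|_{U_j\cap U_k})$ and $\delta$ the Čech differential. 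In particular $H^1(\mathit{Lin}_\fX)=\Coker\delta$ (and $H^q=0$ for $q\ge 2$), so it remains to understand $\delta$ and $\mathit{deg}$ concretely.

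The one non-formal ingredient is a local description of $\mathit{Lin}_\fX$. A section $\psi_i\in H^0(\mathit{Lin}_\fX|_{U_i})$ is a simple function on $U_i$ with $\partial_i\psi_i=0$ in $N^1(D_i)_\R\cong\R$; extending affinely along the incident edges, it is recorded by its value $\psi_i(i)$ and the values $\psi_i(l)$ at the neighbouring vertices $l\mbox{---}i$. Because $\fX$ is SNC, each non-empty double stratum $D_i\cap D_l$ is a single reduced point, so $D_i\cdot D_l=1$; and since $\fXs=\sum_i\mult_i D_i$ is cut out by the uniformizer $\varpi$, the class $[\fXs]|_{D_i}$ vanishes, giving $\mult_i\,(D_i\cdot D_i)+\sum_{l\mbox{---}i}\mult_l=0$. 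Feeding this relation into $\partial_i\psi_i=0$ converts it into the harmonicity identity $\sum_{l\mbox{---}i}\mult_l\big(\psi_i(l)-\psi_i(i)\big)=0$, valid for every $i\in\IX$.

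Granting this, the computation that $\mathit{deg}$ descends is short: for $\psi=(\psi_i)\in C^0$ the edge cochain $(\delta\psi)_{jk}$ takes the values $\psi_k(j)-\psi_j(j)$ at $j$ and $\psi_k(k)-\psi_j(k)$ at $k$, so inserting into \eqref{eq:def_of_deg} and regrouping the telescoping sum over edges according to the vertex each $\psi_i$ is centred at yields $\mathit{deg}(\delta\psi)=\sum_i\mult_i\sum_{l\mbox{---}i}\mult_l\big(\psi_i(i)-\psi_i(l)\big)$, which vanishes term by term by the harmonicity identity. Hence $\mathit{deg}$ kills $\Image\delta$ and induces $\deg\colon H^1(\mathit{Lin}_\fX)\to\R$; surjectivity is immediate, since the cochain equal to the affine function with endpoint difference $1$ on a single chosen edge $\{j_0,k_0\}$ and zero elsewhere has $\mathit{deg}=\mult_{j_0}\mult_{k_0}\neq 0$.

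For injectivity I would not try to solve $\delta\psi=\varphi$ directly but instead count dimensions: by \eqref{eq:cohomology1}, \eqref{eq:cohomology2} and the handshake lemma, $\dim C^0=\sum_i d(i)=2\cdot\#(\text{edges})=\dim C^1$, hence $\dim H^1(\mathit{Lin}_\fX)=\dim H^0(\mathit{Lin}_\fX)$. Now a global section of $\mathit{Lin}_\fX$ is a simple function $\varphi$ on $\SX$ with $\partial_i\varphi=0$ for every $i$, i.e.\ $\sum_{l\mbox{---}i}\mult_l(\varphi(l)-\varphi(i))=0$ for every $i$; choosing a vertex $i_0$ at which $\varphi$ is maximal, all summands of $\sum_{l \mbox{---} i_0}\mult_l(\varphi(l)-\varphi(i_0))$ are $\le 0$, so $\varphi$ agrees with $\varphi(i_0)$ on the neighbours of $i_0$, and iterating over the connected graph $\SX$ shows $\varphi$ is constant. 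Thus $\dim H^1=\dim H^0=1$, and the surjection $\deg$ is an isomorphism. The main obstacle is the local analysis of the second paragraph: one must pin down the sections of $\mathit{Lin}_\fX$ over the stars $U_i$ and combine transversality of the SNC model ($D_i\cdot D_l=1$) with the principality of the special fibre to turn the cohomological vanishing $\partial_i\psi_i=0$ into the combinatorial harmonicity relation that makes the sum in \eqref{eq:def_of_deg} telescope to zero; everything else is routine bookkeeping and a dimension count.
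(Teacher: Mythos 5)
Your proposal is correct and takes essentially the same route as the paper: the same two-term Čech complex supplied by \cref{lem:acyclicity}, the same vertex harmonicity identity (the paper's \eqref{eq:linearity}, derived exactly as you do from $\partial_i\varphi_i=0$, $\sum_j \mult_j\,[D_j]|_{D_i}=0$ and $[D_j]|_{D_i}=1$ for $j$ adjacent to $i$), and the same telescoping computation showing $\mathit{deg}$ annihilates the image of the Čech differential, followed by a dimension count and surjectivity. Your way of getting $\dim H^1=1$ (Euler characteristic plus the maximum principle for weighted harmonic functions) is just a more explicit rendering of the paper's statement that $\Ker d^0$ consists of constants on the connected graph $\SX$, so the two arguments coincide in substance.
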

\begin{proof}
By Lemma \ref{lem:acyclicity}, the cohomology of the sheaf $\mathit{Lin}_\fX$ can be calculated by the following ordered Čech complex with respect to the covering $\{U_i, i\in\IX\}$
\[0\rightarrow\prod_{i\in\IX} H^0\big(\mathit{Lin}_{\fX}|_{U_i}\big)\xrightarrow{d^0}\prod_{j\prec k} H^0\big(\mathit{Lin}_{\fX}|_{U_j\cap U_k}\big)\rightarrow 0.\]

Let $n_E$ denote the number of edges of $\SX$. By Lemma \ref{lem:acyclicity}, we have the isomorphisms
\[\prod_{i\in\IX} H^0\big(\mathit{Lin}_{\fX}|_{U_i}\big)\simeq \R^{2 n_E} ,\]
\[\prod_{j\prec k}H^0\big(\mathit{Lin}_{\fX}|_{U_j\cap U_k}\big)\simeq \R^{2 n_E}.\]

The differential $d^0$ is defined as follows. An element of $\prod_{i\in\IX} H^0\big(\mathit{Lin}_{\fX}|_{U_i}\big)$ is a collection of linear functions $\Set{\varphi_i| i\in\IX}$ on the open subsets $U_i$.
For any pair of vertices $j,k$ such that $j\prec k$, let $d^0\big(\{\varphi_i\,|\, i\in\IX\}\big)_{jk}$ denote the component of $d^0\big(\{\varphi_i\,|\, i\in\IX\}\big)$ in $H^0\big(\mathit{Lin}_{\fX}|_{U_j\cap U_k}\big)$.
We put
\[d^0\big(\{\varphi_i\,|\, i\in\IX\}\big)_{jk}=\varphi_j|_{U_j\cap U_k}-\varphi_k|_{U_j\cap U_k}.\]

Since the graph $\SX$ is connected, we deduce that $\Ker d^0$ consists of constant functions and $\dim_\R\Ker d^0=1$.
So we have $\dim_\R\Image d^0= 2 n_E-1$, and
\begin{equation}\label{eq:calc_H1_dim}
\dim_\R H^1 \big(\mathit{Lin}_\fX\big)=1.
\end{equation}

\begin{lem}\label{lem:calc_H1_proof}
We have $\mathit{deg}\circ d^0=0$.
\end{lem}
\begin{proof}
Let $\Set{\varphi_i|i\in\IX}$ be an element of $\prod_{i\in\IX} H^0\big(\mathit{Lin}_{\fX}|_{U_i}\big)$.
The linearity of the function $\varphi_i$ at the vertex $i$ implies that
\[\partial_i\varphi_i = \sum_{j\in\IX} \mult_j\cdot\varphi_i (j) \cdot [D_j]|_{D_i} = 0 \in N^1(D_i)_\R.\]
If the vertices $i$ and $j$ are not connected by an edge, $[D_j]|_{D_i} = 0$. So we have
\[\partial_i \varphi_i = \mult_i\cdot\varphi_i (i)\cdot [D_i]|_{D_i} + \sum_{j\mbox{---} i} \mult_j\cdot\varphi_i(j)\cdot[D_j]|_{D_i} = 0,\]
where $j \mbox{---} i$ means that the vertex $j$ and the vertex $i$ are connected by an edge. Since
\[0 = \sum_{j\in\IX} \mathit{mult}_j\cdot [D_j]|_{D_i} = \mathit{mult}_i\cdot [D_i]|_{D_i} + \sum_{j\mbox{---} i} \mathit{mult}_j\cdot [D_j]|_{D_i},\]
and
\[[D_j]|_{D_i} = 1\in N^1(D_i)_\R \quad\text{ for }j\mbox{---}i,\]
we obtain that
\begin{equation}\label{eq:linearity}
\varphi_i(i)\cdot\sum_{j\mbox{---}i} \mathit{mult}_j = \sum_{j\mbox{---} i}\mult_j\cdot\varphi_i(j).
\end{equation}
Therefore, we have
\begin{align*}
\MoveEqLeft (\mathit{deg}\circ d^0)\big(\{\varphi_i\,|\, i\in\IX\}\big)\\
&= \mathit{deg}\big(\{\varphi_j-\varphi_k\,|\, j\prec k\}\big)\\
&=\sum_{j\prec k}
\mult_j\cdot\mult_k\cdot\Big(\big(\varphi_j(k)-\varphi_k(k)\big)-\big(\varphi_j(j)-\varphi_k(j)\big)\Big)\\
&=\sum_{i\in\IX} \mult_i\cdot\bigg(-\varphi_i(i)\cdot\sum_{j\mbox{---}i} \mathit{mult}_j + \sum_{j\mbox{---}i}\mult_j\cdot\varphi_i(j)\bigg)=0
\end{align*}
\end{proof}

To conclude, Lemma \ref{lem:calc_H1_proof}, \cref{eq:calc_H1_dim} and the surjectivity of the map $\mathit{deg}$ imply that $\mathit{deg}$ induces an isomorphism
\[\deg\colon H^1\big(\mathit{Lin}_\fX\big)\xrightarrow{\ \sim\ }\R.\]
\end{proof}

Let $L$ be a virtual line bundle on the $k$-analytic curve $X$ with respect to the formal model $\fX$.
By definition, it corresponds to an element in $H^1(\mathit{Lin}_\fX)$.

\begin{defin}\label{def:degree_of_virtual_line_bundle}
The \emph{degree} of the virtual line bundle $L$ is defined to be the real number via the isomorphism in \cref{prop:calc_H1}. We denote it by $\deg(L)$.
\end{defin}

\begin{rem}
The degree does not depend on the choice of the ordering on the vertices of $\SX$.
One can check this using the alternating Čech complex.
\end{rem}

As in the case of complex geometry, the degree of a virtual line bundle can also be calculated via the curvature of its metrizations.
Let $\hL$ be a simple metrization of the virtual line bundle $L$. By Definition \ref{def:curvature}, the curvature $c(\hL)$ of $\hL$ is a collection of numerical classes $\partial_i \varphi_i\in N^1(D_i)_\R$ for every $i\in\IX$.

\begin{defin}\label{def:degree_of_curvature}
The \emph{degree} $\deg\big(c(\hL)\big)$ of the curvature $c(\hL)$ is the sum \[\sum_{i\in\IX} \mathit{mult}_i\cdot \deg \partial_i \varphi_i.\]
\end{defin}

\begin{prop}\label{prop:calc_of_degree}
For a virtual line bundle $L$ and a simple metrization $\widehat L$ of $L$, we have
\[\deg\big(c(\hL)\big) = \deg(L).\]
\end{prop}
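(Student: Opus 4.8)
The plan is to compare two ways of computing a real number attached to $L$: directly from its class in $H^1(\mathit{Lin}_\fX)$ via $\deg$, and via the degree of the curvature of a metrization. The bridge is the short exact sequence of sheaves on $\SX$ relating $\mathit{Lin}_\fX$ to $\mathit{Simp}_\fX$ and the quotient sheaf that records derivatives. First I would observe that a simple metrization $\hL$ is a global section of $\mathit{Simp}_\fX\otimes_{\mathit{Lin}_\fX} L$; concretely, over the cover $\{U_i\}$ it is given by germs of simple functions $\varphi_i$ at each vertex $i$, well defined up to $\mathit{Lin}_\fX$, together with the transition data on the overlaps $U_j\cap U_k$ recording that $\varphi_j$ and $\varphi_k$ differ by the $\mathit{Lin}_\fX$-torsor cocycle defining $L$. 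So if we set $\psi_{jk}=\varphi_j-\varphi_k$ on $U_j\cap U_k$, then $\{\psi_{jk}\,|\,j\prec k\}$ is precisely a Čech $1$-cocycle representing the class of $L$ in $H^1(\mathit{Lin}_\fX)$ under the identification from Lemma \ref{lem:acyclicity} and Proposition \ref{prop:calc_H1}.

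With that translation in hand, the computation becomes a direct comparison. On one side, $\deg(L)$ is by Definition \ref{def:degree_of_virtual_line_bundle} the image of this cocycle under the map $\mathit{deg}$ of \eqref{eq:def_of_deg}, namely
\[\deg(L)=\sum_{j\prec k}\mult_j\cdot\mult_k\cdot\big(\psi_{jk}(k)-\psi_{jk}(j)\big).\]
On the other side, the degree of the curvature is, by Definitions \ref{def:degree_of_curvature} and \ref{def:degree_of_virtual_line_bundle}, $\sum_{i\in\IX}\mult_i\cdot\deg(\partial_i\varphi_i)$; since $D_i$ is a point for a curve, $N^1(D_i)_\R\simeq\R$ with $[D_j]|_{D_i}=1$ for each edge $j\mbox{---}i$, so $\partial_i\varphi_i=\mult_i\varphi_i(i)+\sum_{j\mbox{---}i}\mult_j\varphi_i(j)$. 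The key point will be that $\deg$ as a functional on $H^1(\mathit{Lin}_\fX)$ is well defined (Proposition \ref{prop:calc_H1}), so it does not matter which representative of the class we use; we may therefore replace each $\psi_{jk}$ by the restriction of $\varphi_j-\varphi_k$ computed from the chosen metrization, and then expand $\deg(L)$ over vertices exactly as in the last displayed computation in the proof of Lemma \ref{lem:calc_H1_proof}, obtaining $\sum_i\mult_i\big(-\varphi_i(i)\sum_{j\mbox{---}i}\mult_j+\sum_{j\mbox{---}i}\mult_j\varphi_i(j)\big)$, but now \emph{without} the linearity relation \eqref{eq:linearity}, since $\varphi_i$ is a genuine simple function and not a section of $\mathit{Lin}_\fX$. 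Comparing term by term with $\sum_i\mult_i\deg(\partial_i\varphi_i)=\sum_i\mult_i\big(\mult_i\varphi_i(i)+\sum_{j\mbox{---}i}\mult_j\varphi_i(j)\big)$ and using the identity $\mult_i=-\sum_{j\mbox{---}i}\mult_j$ in $N^1(D_i)_\R\simeq\R$ (this is the relation $0=\sum_j\mult_j[D_j]|_{D_i}$ used in the proof of Lemma \ref{lem:calc_H1_proof}), the two expressions match.

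The main obstacle, such as it is, lies in making the dictionary between the metrization datum and the Čech cocycle precise and checking that the sign conventions in $d^0$, in $\partial_i$, and in $\mathit{deg}$ are mutually consistent; once the identity $\mult_i[D_i]|_{D_i}=-\sum_{j\mbox{---}i}\mult_j$ is invoked the rest is a bookkeeping exercise parallel to Lemma \ref{lem:calc_H1_proof}. I would also remark that independence of $\deg(c(\hL))$ from the choice of simple metrization $\hL$ of the \emph{same} $L$ is automatic from the statement, since the right-hand side $\deg(L)$ depends only on $L$; alternatively, two simple metrizations of $L$ differ by a global section of $\mathit{Simp}_\fX$, i.e.\ a global simple function, whose contribution to the curvature degree vanishes by the same vertex-wise linearity cancellation. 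This gives a second, self-contained consistency check but is not needed for the proof.
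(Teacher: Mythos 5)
Your proposal is, in substance, the paper's own proof: you encode the metrization by simple functions $\varphi_i$ near the vertices, observe that the differences $\varphi_j-\varphi_k$ on the open edges form a Čech cocycle representing $L$ in $H^1(\mathit{Lin}_\fX)$, evaluate \eqref{eq:def_of_deg} on it, and compare with $\sum_i \mult_i\cdot\deg\partial_i\varphi_i$ by regrouping the edge sum into a vertex sum exactly as in the computation of \cref{lem:calc_H1_proof}. So the route is the same and the computation comes out right.

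Two statements in your write-up should be repaired, though they do not change the structure of the argument. First, $D_i$ is not a point: it is an irreducible component of $\fXs\red$, hence a proper curve over $\tilde k$; the points are the double intersections $D_{jk}$. The isomorphism $N^1(D_i)_\R\simeq\R$ is given by the degree map on the curve $D_i$, and this degree is what appears in $\deg\partial_i\varphi_i$ (if $D_i$ were a point, $N^1(D_i)_\R$ would vanish and the whole statement would trivialize). Second, your displayed formula $\partial_i\varphi_i=\mult_i\varphi_i(i)+\sum_{j\mbox{---}i}\mult_j\varphi_i(j)$ silently sets $[D_i]|_{D_i}=1$, and the identity $\mult_i=-\sum_{j\mbox{---}i}\mult_j$ cannot hold literally, since all multiplicities are positive. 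The identity you actually need, and which you do state correctly at the end, is $\mult_i\cdot\deg\big([D_i]|_{D_i}\big)=-\sum_{j\mbox{---}i}\mult_j$; it comes from restricting the numerically trivial vertical divisor $\sum_{j\in\IX}\mult_j D_j$ to $D_i$, together with $\deg\big([D_j]|_{D_i}\big)=1$ for $j\mbox{---}i$. Substituting it gives $\deg\partial_i\varphi_i=-\varphi_i(i)\sum_{j\mbox{---}i}\mult_j+\sum_{j\mbox{---}i}\mult_j\varphi_i(j)$, after which your comparison with $\deg(L)$ coincides with the paper's calculation. Your closing consistency check (independence of the chosen metrization) is also fine; note the cancellation there is edge-wise and needs no linearity.
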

\begin{proof}
Assume that the virtual line bundle $L$ is given by a collection of linear functions $\Set{\varphi_{jk} | j\prec k}$ on all the open edges, which we regard as transition functions.
By definition,
\begin{align*}
\deg(L) &= \mathit{deg}\big(\Set{\varphi_{jk} | j\prec k}\big) \\
&= \sum_{j\prec k}\mult_j\cdot\mult_k\cdot\big(\varphi_{jk}(k) - \varphi_{jk}(j)\big).
\end{align*}
On the other hand, assume that the simple metrization $\widehat L$ is given by a collection of simple functions $\Set{\varphi_i | i\in\IX}$ such that $\varphi_j-\varphi_k = \varphi_{jk}$ for any pair of vertices $j\prec k$.
As in the deduction of \cref{eq:linearity}, we have
\[\deg \partial_i\varphi_i = -\varphi_i(i) \sum_{j\mbox{---} i} \mathit{mult}_j + \sum_{j\mbox{---} i}\mult_j\cdot\varphi_i(j).\]
Therefore,
\begin{align*}
\deg\big(c(\hL)\big) &= \sum_{i\in\IX}\mathit{mult}_i\cdot\deg\partial_i\varphi_i\\
&= \sum_{i\in\IX} \mult_i\cdot\bigg(-\varphi_i(i) \sum_{j\mbox{---} i} \mathit{mult}_j + \sum_{j\mbox{---} i}\mult_j\cdot\varphi_i(j)\bigg)\\
&=\sum_{j\prec k}
\mult_j\cdot\mult_k\cdot\Big(\big(\varphi_j(k)-\varphi_k(k)\big)-\big(\varphi_j(j)-\varphi_k(j)\big)\Big)\\
&= \sum_{j\prec k} \mult_j\cdot\mult_k\cdot\big(\varphi_{jk}(k) - \varphi_{jk}(j)\big)\\
&= \deg L.
\end{align*}
\end{proof}

\section{Formal stacks and non-archimedean analytic stacks}\label{sec:definition_of_stacks}

Moduli spaces often carry the structure of stacks. 
In this section, we define the analogs of algebraic stacks in formal geometry and in non-archimedean analytic geometry.
We hope that our considerations serve as motivation for further studies on non-archimedean analytic stacks\footnote{After this paper, Mauro Porta and Tony Yue Yu developed a theory of analytic stacks in greater generality in \cite{Porta_Yu_Higher_analytic_stacks_2014}.
In the meantime, Martin Ulirsch also studied a notion of non-archimedean analytic stack in \cite{Ulirsch_Geometric_2014}.}.

Denote by $\FSch_{\kc}$ the category of formal schemes locally finitely presented over $k^\circ$, equipped with the étale topology.
Denote by $\An_k$ the category of strictly $k$-analytic spaces, equipped with the quasi-étale topology (cf.\ \cite{Berkovich_Vanishing_1994} \S 3).
Let $\mathrm{Set}$ denote the category of sets.

\begin{defin}
A \emph{formal stack} $\fX$ (locally) finitely presented over $k^\circ$ is a stack in groupoids over the site $\FSch_{\kc}$, such that the diagonal $\fX\rightarrow \fX\times_{\kc} \fX$ is representable\footnote{In general, one can allow the diagonal to be representable only by formal algebraic spaces.
We make the restrictive definition here because it suffices for our purposes and it simplifies the exposition.
\personal{We recall that any separated and locally quasi-finite morphisms of algebraic spaces are representable by schemes (\cite{Laumon_Champs_2000} Theorem A.2).
}} and that there exists a formal scheme $\fU$ (locally) finitely presented over $k^\circ$ and a smooth effective epimorphism $\fU\rightarrow \fX$.
\end{defin}

\begin{defin}\label{def:k-analytic_stack}
A \emph{strictly $k$-analytic stack} $X$ is a stack in groupoids over the site $\An_k$, such that the diagonal $X\rightarrow X\times_k X$ is representable and that there exists a strictly $k$-analytic space $U$ and a quasi-smooth\footnote{Antoine Ducros' work \cite{Ducros_Families_2011} is a comprehensive reference for the notions of flatness and smoothness in $k$-analytic geometry} effective epimorphism $U\rightarrow X$.
A strictly $k$-analytic stack $X$ is said to be \emph{compact} if one can choose the covering $U$ to be a compact strictly $k$-analytic space.
\end{defin}

As we assumed in the beginning of \cref{sec:basic_settings_(Gromov)} that all the $k$-analytic spaces we consider are strictly $k$-analytic, we will omit the word ``strictly'' from now on for simplicity.

\begin{rem}\label{rem:left_Kan_extension}
Let $(\cdot)\an$ denote the analytification functor from the category of schemes locally of finite type over $k$ to the category of $k$-analytic spaces,
let $(\cdot)_s$ denote the special fiber functor from the category of formal schemes locally finitely presented over $k^\circ$ to the category of schemes locally of finite type over the residue field $\tilde k$,
and let $(\cdot)_\eta$ denote the generic fiber functor from the category of formal schemes locally finitely presented over $k^\circ$ to the category of $k$-analytic spaces.
Using left Kan extension, the analytification functor $(\cdot)\an$ can be extended to a functor from the category of algebraic stacks locally of finite type over $k$ with representable diagonal to the category of $k$-analytic stacks, which we denote again by $(\cdot)\an$.
We refer to \cite[\S 6]{Porta_Yu_Higher_analytic_stacks_2014} for details.
Similarly, we obtain the special fiber functor $(\cdot)_s$ from the category of formal stacks locally finitely presented over $\Spf \kc$ to the category of algebraic stacks locally of finite type over $\tilde k$,
and we obtain the generic fiber functor $(\cdot)_\eta$ from the category of formal stacks locally finitely presented over $\Spf\kc$ to the category of $k$-analytic stacks.
\end{rem}

\begin{defin}
A morphism $\ff\colon \fX\rightarrow\fY$ of formal stacks locally finitely presented over $k^\circ$ is said to be \emph{proper} if the induced morphism $\ff_s\colon \fXs\rightarrow\fYs$ of algebraic stacks over $\tilde k$ is proper.
\end{defin}

\begin{defin}
A morphism $f\colon X\to Y$ of $k$-analytic stacks is said to be \emph{separated} if the diagonal morphism $\Delta_f\colon X\to X\times_Y X$ is representable by proper morphisms of \kanal spaces.
\end{defin}

\begin{defin}\label{def:properness_of_analytic_stacks}
A morphism $f\colon X\to Y$ of $k$-analytic stacks is said to be \emph{proper} if it is separated, and if there exists an affinoid quasi-smooth covering $\{Y_i\}_{i\in I}$ of $Y$ such that for every $i$ there exists two finite affinoid quasi-smooth coverings $\{U_{ij}\}_{j\in J_I}$ and $\{V_{ij}\}_{j\in J_I}$ of $X\times_Y Y_i$ such that $U_{ij}\subset\Int(V_{ij}/Y_i)$ for every $j$.
\end{defin}

\begin{rem}
\cref{def:properness_of_analytic_stacks} is inspired by the definition of properness in rigid analytic geometry (cf.\ \cite{Bosch_Non-archimedean_1984} \S 9.6.2).
\end{rem}

\begin{thm}\label{thm:properness_of_analytic_stacks}
Let $\ff\colon \fX\rightarrow\fY$ be a proper morphism of formal stacks locally finitely presented over $k^\circ$.
Assume that the diagonal $\Delta_{\ff}\colon \fX\to \fX\times_{\fY} \fX$ is unramified.
Then the induced morphism of the generic fibers $\ff_\eta\colon\fXe\to\fYe$ is a proper morphism of $k$-analytic stacks.
\end{thm}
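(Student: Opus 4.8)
The plan is to reduce the statement about stacks to the corresponding statement about $k$-analytic spaces, which one can hope to find in the literature on the generic fiber of proper formal schemes (the ``rigid GAGA''/Temkin-type results), and then verify that the stacky covering data transports through the generic fiber functor. First I would choose a formal scheme $\fU$ locally finitely presented over $\kc$ together with a smooth effective epimorphism $p\colon\fU\to\fX$, which exists by the definition of formal stack. Set $\fR=\fU\times_{\fX}\fU$; since the diagonal of $\fX$ is representable (and, by hypothesis, unramified), $\fR$ is a formal algebraic space, in fact a formal scheme after refining $\fU$ by an \'etale cover, so I may assume $\fR$ is a formal scheme locally finitely presented over $\kc$. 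Thus $\fX$ is presented by the smooth groupoid $[\fR\rightrightarrows\fU]$. Applying the generic fiber functor $(\cdot)_\eta$ (which, by \cref{rem:left_Kan_extension}, commutes with the relevant colimits) gives a presentation of $\fXe$ by the $k$-analytic groupoid $[\fR_\eta\rightrightarrows\fU_\eta]$, and similarly for $\fY$; the morphism $\ff_\eta$ is the induced morphism of quotient stacks. One must check that $\fU_\eta\to\fXe$ is a quasi-smooth effective epimorphism — quasi-smoothness because $(\cdot)_\eta$ sends smooth formal morphisms to quasi-smooth analytic morphisms, and effective epimorphism because $(\cdot)_\eta$ is compatible with the formation of $\Cech$ nerves.

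Next I would unwind what has to be proved. By \cref{def:properness_of_analytic_stacks}, I need (a) separatedness of $\ff_\eta$, i.e.\ $\Delta_{\ff_\eta}\colon\fXe\to\fXe\times_{\fYe}\fXe$ is representable by proper morphisms of $k$-analytic spaces; and (b) the existence of a suitable affinoid quasi-smooth covering of $\fYe$ together with finite affinoid quasi-smooth coverings $\{U_{ij}\}$, $\{V_{ij}\}$ of $\fXe\times_{\fYe}Y_i$ with $U_{ij}\subset\Int(V_{ij}/Y_i)$. For (a): the diagonal $\Delta_{\ff}\colon\fX\to\fX\times_{\fY}\fX$ is representable (it is computed by $\fR\to\fU\times_{\fV}\fU$, with $\fV$ the chart of $\fY$) and, on special fibers, $\ff_s$ is proper by hypothesis so $\Delta_{\ff_s}$ is proper and unramified, hence a closed immersion; being also unramified on all of $\fX$ by assumption, $\Delta_{\ff}$ is a proper morphism of formal schemes (properness for formal schemes being tested on the special fiber, by the definition just above \cref{thm:properness_of_analytic_stacks}), so its generic fiber $\Delta_{\ff_\eta}$ is a proper morphism of $k$-analytic spaces by the representable (non-stacky) case of the theorem. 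For (b): working locally on the chart $\fV$ of $\fY$, one is reduced to a morphism of formal schemes $\fX'\to\fV$ whose special fiber is proper; I would produce the $V_{ij}$ by taking an open affine cover of a compactification of $\fX'_s$ over $\fV_s$ — more precisely, use that $\fX'_s\to\fV_s$ proper implies one can cover $\fX'_s$ by affines $W_j$ that are ``relatively compact'' in larger affines $W_j'$ — lift these to open formal subschemes and pass to generic fibers, using that the reduction map $\pi$ identifies relative interiors in the analytic sense with the loci reducing into the given special-fiber opens (this is exactly the mechanism behind the representable case of properness of generic fibers). Finiteness of the cover follows from quasi-compactness of $\fX'_s$ over the affine base.

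The main obstacle I anticipate is twofold. First, the bookkeeping in (b): one must ensure the coverings can be chosen \emph{finite} and \emph{affinoid} and \emph{quasi-smooth} simultaneously on $\fXe\times_{\fYe}Y_i$ rather than merely on the chart $\fU_\eta$, and then that they descend to, or are intrinsic to, the stack quotient in the sense required by \cref{def:properness_of_analytic_stacks} — here the hypothesis that $\Delta_{\ff}$ is unramified is what keeps $\fR$ (and hence the groupoid) under control and makes the local-on-$\fY$ reduction legitimate. Second, and more essential, is citing or establishing the representable case cleanly: that the generic fiber of a proper morphism of formal schemes locally finitely presented over $\kc$ is proper as a morphism of $k$-analytic spaces in the sense of relative interiors. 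This is where I expect to lean on Berkovich's reduction map and the behaviour of $\Int(\cdot/\cdot)$ under $(\cdot)_\eta$; if that statement is not directly available in the cited references it would have to be proved by the standard argument comparing $\Int(V_\eta/Y_\eta)$ with $\pi^{-1}$ of the relevant open of the special fiber, which is the technical heart. Everything stacky beyond that point is formal descent along the quasi-smooth covering $\fU_\eta\to\fXe$.
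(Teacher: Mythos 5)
Your separatedness step is essentially the paper's: properness of $\Delta_{\ff_\eta}$ is deduced from properness of $\Delta_{\ff_s}$ via the representable case (Temkin, Lütkebohmert), and that part of your sketch is fine. The gap is in part (b). You claim that, working locally on a chart $\fV$ of $\fY$, ``one is reduced to a morphism of formal schemes $\fX'\to\fV$ whose special fiber is proper.'' This reduction does not exist: $\fX\times_{\fY}\fV$ is still a formal \emph{stack}, and the only scheme you have in hand is the smooth chart $\fU\to\fX$, which is not proper over $\fY$. Its generic fiber $\fU_\eta$ is (locally) affinoid, hence has nonempty boundary over $\fYe$, so no choice of opens inside the chart can produce affinoid coverings $U_{ij}\subset\Int(V_{ij}/Y_i)$ as required by \cref{def:properness_of_analytic_stacks}; the interiority condition is exactly what a smooth presentation destroys. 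Your plan of ``covering a compactification of $\fX'_s$ over $\fV_s$ by relatively compact affines and lifting'' therefore has no input to start from in the stacky situation, and the role you assign to the unramified diagonal (keeping $\fR$ a scheme, legitimizing the local reduction) is not where the hypothesis is actually needed.

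The missing idea, which is the heart of the paper's proof, is the construction of a \emph{different} covering of $\fXe$ that is simultaneously quasi-smooth over $\fXe$ and boundaryless over $\fYe$. The paper applies Chow's lemma for algebraic stacks to get a scheme $\cX^0$, projective over $\fYs$, with a proper surjection $\cX^0\to\fXs$; the unramifiedness of $\Delta_{\ff}$ is then used to show that the induced map $\cX^0\to\bP^N_{\fXs}$ is unramified, which permits forming the formal completion $\fX^0$ of $\bP^N_{\fX}$ along $\cX^0$, and \cref{lem:reduction_of_algebraic_stacks} shows this completion is an honest (special) formal scheme. Its generic fiber maps to $\fXe$ quasi-smoothly (after base change to an affinoid chart, smoothness follows from quasi-étaleness of the embedding into $\bP^N$ plus boundarylessness, via Ducros), surjectively, and boundarylessly over $\fYe$ (because $\cX^0$ is proper over $\fYs$, by Temkin), and the finite double affinoid coverings with $U_j\subset\Int(V_j/\fYe)$ are extracted from it by compactness. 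So the unramified-diagonal hypothesis is the engine of the covering construction, not bookkeeping; indeed the paper explicitly leaves the statement without that hypothesis as \cref{conj:properness_of_analytic_stacks}, which your argument, if it worked as written, would prove. As it stands, the proposal does not contain the key construction and the covering step would fail.
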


\begin{lem}\label{lem:reduction_of_algebraic_stacks}
Let $X$ be a locally noetherian algebraic stack.
Assume that the diagonal $\Delta_X\colon X\to X\times X$ is locally of finite type and locally separated.
Let $X\red$ denote the reduction of $X$, i.e.\ the reduced algebraic stack structure on $X$.
If $X\red$ is a scheme, then the algebraic stack $X$ is also a scheme.
\end{lem}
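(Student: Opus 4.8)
The plan is to argue that reducedness can be checked étale-locally and that being a scheme is fppf-local on the source in a way that lets us descend. First I would pick a smooth (or étale) presentation $p\colon U\to X$ by a scheme $U$, locally of finite type over the base; such a presentation exists since $X$ is a locally noetherian algebraic stack. Pulling back along $X\red\hookrightarrow X$ gives a presentation $U\red\to X\red$ (one checks that $U\times_X X\red = U\red$ because $U\to X$ is flat, so the closed immersion defining the reduction pulls back to the closed immersion defining $U\red$). Since $X\red$ is a scheme, the morphism $U\red\to X\red$ is representable, so the groupoid $U\red\times_{X\red} U\red\rightrightarrows U\red$ is a scheme-theoretic equivalence relation; moreover the two projections are smooth (or étale) and the relation is an effective equivalence relation whose quotient is $X\red$.

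The key point is then to promote this to $X$ itself. I would consider the groupoid $R:=U\times_X U\rightrightarrows U$ presenting $X$; by hypothesis on the diagonal, $R$ is an algebraic space (indeed a scheme, using the standing assumption in the paper that diagonals are representable by schemes, together with local separatedness and local finite-typeness of $\Delta_X$, which make $R\to U\times U$ locally separated and locally quasi-finite in the étale case). Now $R\red = U\red\times_{X\red} U\red$ is a scheme, and $R$ is a locally noetherian algebraic space whose reduction is a scheme; by the analogous (and easier) statement for algebraic spaces --- a locally noetherian algebraic space with schematic reduction is a scheme, which one proves by the standard argument that an algebraic space is a scheme in a neighborhood of any point with an affine schematic reduction, e.g.\ via Knutson's criterion or \cite{Laumon_Champs_2000} --- we conclude $R$ is a scheme. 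Thus $R\rightrightarrows U$ is a smooth (resp.\ étale) groupoid in schemes with $R\to U$ having a section (the identity) and whose associated map $R\to U\times U$ is locally quasi-finite and separated near the diagonal; more precisely, since $\Delta_X$ is unramified after reduction and the source and target maps are smooth, the relation $R\rightrightarrows U$ is an fppf (even smooth) equivalence relation. The quotient sheaf is $X$.

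Finally I would invoke effectivity of the descent: $X = U/R$ is a scheme provided the equivalence relation $R\rightrightarrows U$ is one for which quotients exist in schemes. Here the crucial simplification is that $X\red\to X$ is a universal homeomorphism which is finite (a nilpotent thickening), so $X$ and $X\red$ have the same underlying topological space and the same étale site; since $X\red$ is a scheme, $X$ is covered by open substacks each of which has affine reduction. Restricting to such an open, we reduce to the case where $X\red$ is affine, say $X\red = \Spec A_0$, and $U\to X$ can be chosen affine; then $U = \Spec B$ with $B$ a $B_0$-thickening ($B_0 = \mathcal O(U\red)$), the relation $R = \Spec C$, and $X$ is the coequalizer, which is represented by $\Spec$ of the equalizer ring $C' = \{b\in B : s^\#(b)=t^\#(b)\}$ --- one checks this ring is flat-locally the right thing and that $U\to \Spec C'$ is faithfully flat, so $X = \Spec C'$ by fppf descent of schemes (\cite{Conrad_Descent_for_coherent_2003}, or SGA1). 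The main obstacle is the step of passing from ``$R$ is a scheme'' to ``the quotient $X$ is a scheme'': a priori quotients of schemes by smooth equivalence relations need not be schemes (they are only algebraic spaces), so one genuinely needs to use that the quotient is \emph{already known} to be an algebraic \emph{stack} with representable diagonal and that its reduction is a scheme, localizing on $X\red$ to reduce to the affine situation where the coequalizer is computed ring-theoretically. This is where the hypotheses that $\Delta_X$ is locally of finite type and locally separated are consumed, ensuring the groupoid is an honest scheme-theoretic equivalence relation of finite type to which fppf descent applies.
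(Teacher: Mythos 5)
There is a genuine gap, and it sits exactly at the point where all the content of the lemma lies. Your argument needs the groupoid $R=U\times_X U\rightrightarrows U$ to be an \emph{equivalence relation}, i.e.\ $R\to U\times U$ (equivalently $\Delta_X$) to be a monomorphism; otherwise the quotient is only an algebraic stack, not an algebraic space, and no descent or coequalizer computation will produce a scheme. You justify this in one line (``since $\Delta_X$ is unramified after reduction and the source and target maps are smooth, the relation is a smooth equivalence relation''), but this inference is invalid: properties such as being a monomorphism, or being unramified, do not ascend from the reduction to the thickening. Already $\Spec k[\epsilon]\to\Spec k$ has an isomorphism as its reduction while being neither a monomorphism nor unramified. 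Knowing that $X\red$ is a scheme only tells you that objects of $X$ over \emph{reduced} test schemes have no nontrivial automorphisms; ruling out infinitesimal automorphisms (automorphisms over non-reduced test schemes restricting to the identity on the reduction) is precisely what must be proved, and your proposal never does it. The paper's proof is devoted to this step: it first upgrades the inertia $\cI_X\to X$ to a morphism representable by \emph{affine} group schemes (via Knutson's thickening criterion and EGA I 5.1.9, using that $\cI_{X\red}\to X\red$ is an isomorphism), and then kills the ideal sheaf of the identity section by Nakayama, using local noetherianity so that the nilradical is locally nilpotent. Some such argument exploiting the group structure of the inertia is unavoidable; nothing in your sketch substitutes for it.

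The remaining steps are either fine or repairable but also contain inaccuracies worth flagging. The identification $U\times_X X\red=U\red$ holds because $U\to X$ is \emph{smooth} (smooth over reduced is reduced), not because it is flat: flatness alone does not preserve reducedness under pullback. Deducing that $R$ is a scheme from ``$R\red$ is a scheme'' via the space-level thickening result is legitimate (and is the same ingredient, Knutson III 3.6, that the paper uses), but note that the lemma does not assume the diagonal is schematic, so your parenthetical appeal to a ``standing assumption'' is not available here. Finally, the concluding step is both shakier than necessary and unnecessary: once one knows $X$ is an algebraic space (which, again, is the missing step), one can simply invoke the same thickening theorem for algebraic spaces to conclude $X$ is a scheme, exactly as the paper does; by contrast, your proposed computation of the quotient as $\Spec$ of an equalizer ring, with faithful flatness of $U\to\Spec C'$ asserted rather than proved, is not a valid argument for general smooth equivalence relations.
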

\begin{proof}\footnote{I am very grateful to Matthieu Romagny for his help with the proof of the lemma.}
Let $\cI_X\coloneqq X\times_{X\times X} X$ denote the inertia stack, and let $p\colon \cI_X\to X$ be the projection.
The morphism $p$ is representable by group algebraic spaces.
By the following cartesian diagram
\begin{equation*}
\begin{tikzcd}
\cI_X \arrow{r}{\sim} \arrow{d}{p} & X\times_{X\times X} X \arrow{r}\arrow{d} & X\times X \arrow{d}\\
X \arrow{r}{\sim} & X\times_X X \arrow{r} & (X\times X)\times_X (X\times X),
\end{tikzcd}
\end{equation*}
the morphism $p$ is locally of finite type and locally separated.
Since $X\red$ is a scheme, the morphism $p\red\colon \cI_{X\red}\to X\red$ is an isomorphism.
In particular, $p\red$ is representable by affine group schemes.
By \cite{Knutson_Algebraic_spaces_1971} Chap.\ III Corollary 3.6, 
the morphism $p$ is representable by group schemes.
By \cite{EGA1} Chap.\ I Proposition 5.1.9, the morphism $p$ is moreover representable by affine group schemes.

Let $\cM\subset\mathcal O_{\cI_X}$ denote the ideal sheaf cutting out the image of the identity section $e\colon X\to \cI_X$.
The fact that $p\red$ is an isomorphism implies that $\cM\otimes_{\mathcal O_{X}} \mathcal O_{X\red}=0$.
Thus $\cM=0$ by Nakayama Lemma.
So $p\colon\cI_X\to X$ is an isomorphism.
Therefore, by \cite{Laumon_Champs_2000} Corollary 8.1.1, the algebraic stack $X$ is an algebraic space.
Now by \cite{Knutson_Algebraic_spaces_1971} Chap.\ III Corollary 3.6 again, the algebraic space $X$ is a scheme.
\end{proof}

\begin{proof}[Proof of \cref{thm:properness_of_analytic_stacks}]
The question being local on the base, we can assume that $\fY$ is affine without loss of generality.
In this case, $\fX$ is a formal stack finitely presented over $k^\circ$, and $\fXs$ is an algebraic stack of finite type over $\tilde k$.
By \cite{Laumon_Champs_2000} Corollary 16.6.1 or \cite{Olsson_Proper_2005}, there exists a quasi-projective scheme $\cX^0$ over $\fYs$ and a proper surjective morphism $\cX^0\rightarrow\fXs$ over $\fYs$.
Since by assumption, the induced morphism $\ff_s\colon \fXs\rightarrow\fYs$ is a proper morphism of algebraic stacks of finite type over $\tilde k$, the scheme $\cX^0$ is projective over $\fYs$.
Let $\iota_0\colon\cX^0\hookrightarrow\bP^N_{\fYs}$ be a closed embedding.

For any scheme $T$ and any morphism $T\to\fXs$, we have a cartesian diagram
\begin{equation*}
\begin{tikzcd}
\cX^0\times_{\fXs} T \arrow{r} \arrow{d}
& \fXs \arrow{d}{\Delta_{\ff_s}}\\
\cX^0\times_{\fYs} T \arrow{r}
& \fXs\times_{\fYs} \fXs.
\end{tikzcd}
\end{equation*}

Since $\Delta_{\ff}\colon\fX\to\fX\times_\fY\fX$ is unramified by assumption,
the morphism $\Delta_{\ff_s}\colon\fXs\to\fXs\times_{\fYs}\fXs$ and the morphism $\cX^0\times_{\fXs} T\to \cX^0\times_{\fYs} T$ are also unramified.
The closed embedding $\iota_0\colon\cX^0\hookrightarrow\bP^N_{\fYs}$ induces a closed embedding
\[\cX^0\times_{\fYs} T\hookrightarrow\bP^N_{\fYs}\times_{\fYs} T.\]
Therefore the composition
\[\cX^0\times_{\fXs} T \longrightarrow \cX^0\times_{\fYs} T \longrightarrow \bP^N_{\fYs} \times_{\fYs} T\simeq \bP^N_{\fYs} \times_{\fYs} \fXs \times_{\fXs} T\simeq \bP^N_{\fXs}\times_{\fXs} T\]
is unramified.
In other words, the morphism $\cX^0\to\bP^N_{\fXs}$ is unramified.

Since étale locally on the source and target, unramified morphisms are closed embeddings,
it makes sense to take formal completion of $\bP^N_\fX$ along $\cX^0$.
We denote the formal completion by $\fX^0$.
\cref{lem:reduction_of_algebraic_stacks} implies that the formal stack $\fX^0$ is in fact a formal scheme.
It is a special formal scheme in the sense of \cite{Berkovich_Vanishing_II_1996}.
Let $\iota_\eta\colon\fXe^0\to\bP^N_{\fXe}$ denote the induced morphism on the generic fiber.
Let $p\colon\fXe^0\to\fXe$ be the composition of $\iota_\eta$ with the projection $\bP^N_{\fXe}\to\fXe$.

Since the formal stack $\fX$ is finitely presented over $k^\circ$, we can choose a smooth covering of $\fX$ by an affine formal scheme $\fA$ finitely presented over $k^\circ$.
Let $A$ denote the generic fiber of $\fA$.
It is a compact $k$-analytic space.
Let 
\begin{align*}
\iota_A&\colon\fXe^0\times_{\fXe} A\to\bP^N_{\fXe}\times_{\fXe} A\simeq \bP^N_A,\\
p_A&\colon\fXe^0\times_{\fXe} A\to \fXe\times_{\fXe} A\simeq A
\end{align*}
be the base change of the morphisms $\iota_\eta$ and $p$.

By construction, quasi-étale locally on the target, the morphism $\iota_A$ is the composition of an open immersion and a quasi-étale morphism.
So it is quasi-étale by faithfully flat descent \cite{Conrad_Descent_2010}.
Since the projection $\bP^N_A\to A$ is smooth, and the morphism $p_A$ is the composition of $\iota_A$ with the projection $\bP^N_A\to A$, it is then quasi-smooth.
Since the covering $\cX^0\to\fXs$ is proper, i.e.\ the base change $\cX^0\times_{\fXs}\fA_s\to \fA_s$ is proper,
the morphism $p_A$ is boundaryless by \cite{Temkin_Local_2000}.
Moreover, as $A$ is affinoid, the $k$-analytic space $\fXe^0\times_{\fXe} A$ is good.
Therefore, the quasi-smoothness and the boundarylessness of the morphism $p_A$ imply that $p_A$ is smooth (\cite{Ducros_Families_2011} (4.4.4)).
In particular, the morphism $p_A$ is open.

Since the scheme $\cX^0$ is proper over $\fYs$, the composition of $\fXe^0\xrightarrow{p}\fXe\to\fYe$ is boundaryless by \cite{Temkin_Local_2000}.
Therefore, for every point $x\in\fXe^0$, there exists two affinoid neighborhoods $U_x, V_x$ of the point $x$ in $\fXe^0$ such that $U_x\subset\Int(V_x/\fYe)$.
By construction, the surjectivity of the covering $\cX^0\to\fXs$ implies the surjectivity of the morphism $p_A$.
For every point $j\in A$, let $x(j)$ be a point in $\fXe^0\times_{\fXe} A$ which maps to $j$.
Let $\bar{x}(j)$ be the image of $x(j)$ under the projection $\fXe^0\times_{\fXe} A\to\fXe^0$.
Then $U_{\bar{x}(j)}\times_{\fXe} A$ is a neighborhood of $x(j)$ in $\fXe^0\times_{\fXe} A$.
Let $U'_j$ denote the image of the projection $U_{\bar{x}(j)}\times_{\fXe} A\to A$.
It is a neighborhood of $j$ in $A$ by the openness of the morphism $p_A$.

By the compactness of the $k$-analytic space $A$, there exists a finite set of points $J\subset A$ such that $\{U'_j\}_{j\in J}$ is a finite covering of $A$.
Then $\{U_{\bar{x}(j)}\}_{j\in J}$ and $\{V_{\bar{x}(j)}\}_{j\in J}$ are two finite affinoid quasi-smooth coverings of the $k$-analytic stack $\fXe$ such that $U_j\subset\Int(V_j/\fYe)$.

Now we prove the separatedness of the morphism $\ff_\eta\colon \fXe\rightarrow\fYe$.
The properness of the diagonal $\Delta_{\ff_\eta}\colon\fXe\to\fXe\times_{\fYe}\fXe$ follows from the properness of the diagonal $\Delta_{\ff_s}\colon\fXs\to\fXs\times_{\fYs}\fXs$ (see \cite{Temkin_Local_2000,Lutkebohmert_Formal_1990}).
Combining the double covering $\{U_{\bar{x}(j)}\}_{j\in J}$, $\{V_{\bar{x}(j)}\}_{j\in J}$, and the separatedness, we have proved the properness of the morphism~$\ff_\eta$.
\end{proof}

\begin{conj}\label{conj:properness_of_analytic_stacks}
We conjecture that \cref{thm:properness_of_analytic_stacks} holds without the assumption on the unramifiedness of the diagonal.
\end{conj}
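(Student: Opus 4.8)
The plan for attacking \cref{conj:properness_of_analytic_stacks} is to rerun the proof of \cref{thm:properness_of_analytic_stacks}, replacing the special formal scheme used there by a special formal \emph{Artin} stack. The unramifiedness hypothesis is invoked at exactly one point: it guarantees that the proper surjective cover $\cX^0\to\fXs$ produced by \cite{Laumon_Champs_2000,Olsson_Proper_2005} is, étale-locally on the target, a closed subscheme of $\bP^N_{\fXs}$, which is what makes the completion $(\bP^N_\fX)^\wedge_{\cX^0}$ a formal \emph{scheme} (via \cref{lem:reduction_of_algebraic_stacks}). Without the hypothesis, $\cX^0\to\bP^N_{\fXs}$ is still proper and representable, hence has a scheme-theoretic image $\mathcal Z\hookrightarrow\bP^N_{\fXs}$, which is a \emph{closed} substack; moreover $\mathcal Z\to\fXs$ is proper and surjective (restriction of the projection, with $\cX^0\to\mathcal Z$ surjective), and since $\ff_s$ remains proper (so $\cX^0$ is still projective over $\fYs$) and the image of $\mathcal Z$ in $\bP^N_{\fYs}$ is the closed subscheme $\cX^0$, the morphisms $\mathcal Z\to\fYs$ and $\mathcal Z\to\cX^0$ are both proper. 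I would set $\fX^0:=(\bP^N_\fX)^\wedge_{\mathcal Z}$, a special formal Artin stack carrying a morphism $\fX^0\to\bP^N_\fX\to\fX$ that factors through $(\bP^N_\fY)^\wedge_{\cX^0}$, and carry out the remaining steps with $\fX^0$ in place of the original formal scheme.

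Granting the theory of formal Artin stacks and their generic fibers of \cite{Porta_Yu_Higher_analytic_stacks_2014}, together with the analytic-Artin-stack analogues of the facts used in \cref{thm:properness_of_analytic_stacks} — that the generic fiber of a formal completion along a closed substack is an open immersion onto the corresponding tube, that a proper morphism of formal Artin stacks induces a boundaryless morphism of analytic Artin stacks, and that formal completion commutes with base change — the argument runs as follows. Since $\mathcal Z\hookrightarrow\bP^N_{\fXs}$ is a genuine closed immersion, $(\fX^0)_\eta\hookrightarrow\bP^N_{\fXe}$ is an open immersion, hence $(\fX^0)_\eta\to\fXe$ is smooth and surjective; since $\mathcal Z\to\cX^0$ is proper and $\cX^0$ is proper over $\fYs$, the composite $(\fX^0)_\eta\to\big((\bP^N_\fY)^\wedge_{\cX^0}\big)_\eta\to\fYe$ is a composition of boundaryless morphisms, so $(\fX^0)_\eta\to\fYe$ is boundaryless. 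Reduce to $\fY$ affine and pick a smooth affine formal chart $\fA\to\fX$; then $\fX^0\times_\fX\fA=(\bP^N_{\fA})^\wedge_{\mathcal Z\times_{\fXs}\fA_s}$ is a special formal \emph{scheme}, so $W:=(\fX^0)_\eta\times_{\fXe}\fA_\eta$ is a $k$-analytic \emph{space} mapping smoothly, openly and surjectively to the affinoid $A:=\fA_\eta$ and mapping boundarylessly to $\fYe$. Now argue exactly as in \cref{thm:properness_of_analytic_stacks}: for each point of $A$ pick a preimage $x\in W$ and, by boundarylessness of $W\to\fYe$, affinoid subdomains $x\in U_x\subset\Int(V_x/\fYe)$ of $W$; their images under $W\to A$ form an open cover of the compact space $A$, so finitely many $U_x$, $V_x$ remain, and these are affinoid, map quasi-smoothly to $\fXe$ through $W\to\fXe$, have jointly surjective images in $\fXe$ (because $A\to\fXe$ is surjective), and satisfy $U_x\subset\Int(V_x/\fYe)$, which verifies \cref{def:properness_of_analytic_stacks}. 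Separatedness of $\ff_\eta$ follows from properness of $\Delta_{\ff_s}$ exactly as before; that part never used the unramifiedness hypothesis either.

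The geometric scaffolding above is, if anything, cleaner than in \cref{thm:properness_of_analytic_stacks}, since $\mathcal Z\hookrightarrow\bP^N_{\fXs}$ is a genuine closed immersion rather than only an étale-local one. I therefore expect the real difficulty to be foundational: one must develop non-archimedean geometry at the level of Artin stacks — in particular a workable theory of boundaryless and smooth morphisms of analytic Artin stacks and of the relative interior, and the compatibility of the generic-fiber functor with formal completions and base change — and, crucially, extend Temkin's results \cite{Temkin_Local_2000} from formal schemes to formal Artin stacks. It is this boundary theory for morphisms of analytic Artin stacks that I expect to be the genuine obstacle, and presumably the reason the statement is left only as a conjecture. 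One should also check the points that are automatic for schemes: that the generic fiber of $(\bP^N_\fX)^\wedge_{\mathcal Z}$ is the expected tube, that $(\fX^0)_\eta\to\fXe$ is an effective epimorphism, and that surjectivity and openness propagate through all the completions and tubes used above.
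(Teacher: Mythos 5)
The paper does not prove this statement at all: it is recorded as an open conjecture, so the only question is whether your argument closes it, and it does not. The gap is concrete: your claim that $W=(\fX^0)_\eta\times_{\fXe}\fA_\eta$ maps \emph{boundarylessly} to $\fYe$ is false, and it is exactly the step on which the verification of \cref{def:properness_of_analytic_stacks} rests. Even granting a boundary theory for analytic stacks in which ``$(\fX^0)_\eta\to\fYe$ is boundaryless'' makes sense and holds, this property does not descend to $W$, because $W\to(\fX^0)_\eta$ is the base change of the chart $A=\fA_\eta\to\fXe$, which has non-empty boundary. Concretely, writing $q\colon W\to A$ for the projection, the composition formula for relative boundaries (cf.\ \cite{Temkin_Local_2000}) gives $\partial(W/\fYe)=\partial(W/A)\cup q^{-1}\big(\partial(A/\fYe)\big)$; since $W$ is an open subset of $\bP^N_A$ and $\bP^N_A\to A$ is proper, $\partial(W/A)=\emptyset$, so $\partial(W/\fYe)=q^{-1}\big(\partial(A/\fYe)\big)$. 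You also prove (and need) that $q$ is surjective, so $W\to\fYe$ could only be boundaryless if $A\to\fYe$ were boundaryless, i.e.\ if the affinoid chart were finite over the affinoid base --- which fails in essentially every non-trivial case. Hence at any point $x\in W$ lying over $\partial(A/\fYe)$ there are no affinoid subdomains $x\in U_x\subset\Int(V_x/\fYe)$ of $W$, and your construction of the coverings $\{U_x\},\{V_x\}$ breaks down precisely where the interiority condition has to be produced.

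This is not a foundational nuisance on top of an otherwise sound geometric scaffold; it is the heart of the matter, and it is exactly what the unramifiedness hypothesis buys in the proof of \cref{thm:properness_of_analytic_stacks}. There, $\fX^0$ is an honest special formal scheme, so the nested affinoids $U_x\subset\Int(V_x/\fYe)$ are chosen inside the analytic \emph{space} $\fXe^0$, which is boundaryless over $\fYe$ because $\cX^0$ is proper over $\fYs$; the base change to the chart $A$ is used only for openness, surjectivity and the finiteness-by-compactness step, never to produce the interiority condition. Once $(\fX^0)_\eta$ is merely an analytic stack, any attempt to realize the coverings demanded by \cref{def:properness_of_analytic_stacks} by pulling back along an atlas reintroduces the boundary of that atlas, and passing from $\cX^0$ to its scheme-theoretic image $\mathcal Z$ does nothing to remove it: the tube of $\mathcal Z\times_{\fXs}\fA_s$ is proper over $\fA_s$, not over $\fYs$. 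To salvage the strategy you would need either a notion of properness formulated intrinsically for analytic stacks, together with a boundary theory in which boundarylessness of $(\fX^0)_\eta\to\fYe$ directly yields the required data, or an argument that every point of $\fXe$ is already hit by a point of $W$ lying over $\Int(A/\fYe)$ so that the boundary locus can be discarded without losing surjectivity; neither is addressed in your proposal, and this, rather than the missing formal-Artin-stack formalism alone, is the genuine obstruction.
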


\section{The moduli stack of algebraic stable maps}\label{sec:artin_criteria}

The notion of stable map was introduced by Maxim Kontsevich \cite{Kontsevich_Enumeration_1995} in order to compactify the moduli space of smooth curves in an algebraic variety and to provide an algebro-geometric foundation for Gromov-Witten theory.
In this section, we construct the moduli stack of stable maps for a general scheme locally of finite presentation over a locally noetherian base.
This generality will be useful for the study of $k$-analytic stable maps in Section \ref{sec:stacks_of_k-analytic_stable_maps}.

Fix two non-negative integers $g$ and $n$. 
Fix a locally noetherian scheme $S$ and denote by $\Sch_S$ the category of schemes over $S$. Let $X$ be a scheme locally of finite presentation over $S$.

\begin{defin}[cf.\ \cite{Kontsevich_Enumeration_1995} \S 1.1, \cite{Abramovich_Stable_2001} \S 2]\label{def:stable_maps_(Gromov)}
Let $T$ be an $S$-scheme. An \emph{\gn (algebraic) stable map} $\big(C\rightarrow T, (s_i), f\big)$ into $X$ over $T$ consists of a morphism $C\rightarrow T$, a morphism $f\colon C\rightarrow X$ and $n$ morphisms $s_i\colon T\rightarrow C$ such that
\begin{enumerate}[(i)]
\item The morphism $C\rightarrow T$ is a proper flat family of curves;
\item The geometric fibers of $C\rightarrow T$ are reduced with at worst double points as singularities, and are of arithmetic genus $g$;
\item The $n$ morphisms $s_i\colon T\rightarrow C$ are disjoint sections of $C\rightarrow T$ which land in the smooth locus of $C\rightarrow T$;
\item (Stability condition) For any geometric fiber $C_t$ of $C\rightarrow T$, every irreducible component of $C_t$ of genus 0 (resp.\ 1) which maps to a point under $f$ must have at least 3 (resp.\ 1) special points on its normalization,
where special points mean either marked points or the points coming from the double points.
\end{enumerate}
\end{defin}

\begin{defin}
If we remove Condition (iv) of Definition \ref{def:stable_maps_(Gromov)}, we call the triple $\big(C\rightarrow T, (s_i), f\big)$ an \emph{\gn pre-stable map} into $X$ over $T$.
\end{defin}

\begin{defin}
A \emph{morphism of stable maps} $\big(C\rightarrow T,(s_i),f\big)\longrightarrow\big(C'\rightarrow T',(s'_i), f'\big)$ is a commutative diagram
\begin{equation*}
\begin{tikzcd}
C \arrow{r} \arrow{d}
& C' \arrow{d}\\
T \arrow{r}
& T'
\end{tikzcd}
\end{equation*}
inducing an isomorphism $C\xrightarrow{\sim} C'\times_T T'$, compatible with the sections $s_i, s'_i$ and the morphisms $f, f'$.
\end{defin}

\begin{defin}\label{def:stable_curves}
Let $T$ be an $S$-scheme. An \emph{\gn (algebraic) stable (resp.\ pre-stable) curve} $\big(C\rightarrow T, (s_i)\big)$ over $T$ is an \gn stable (resp.\ pre-stable) map $\big(C\rightarrow T, (s_i), f\big)$ into $S$ over $T$ with $f$ being the structure morphism $C\rightarrow S$.
\end{defin}

In the classical case where the base $S$ is a field and $X$ is projective over $S$, one fixes an element $\beta$ in the cone of curves $\NE(X)$.
Denote by $\bcMgn(X/S,\beta)$ the category, fibered in groupoids over the category $\Sch_S$, of \gn stable maps into $X$ with class $\beta$. The main property of this category is described by the following theorem.

\begin{thm}[\cite{Kontsevich_Enumeration_1995}]\label{thm:stack_of_stable_maps_projective_case}
When $S$ is a field and $X$ is projective over $S$, the category $\bcMgn(X/S,\beta)$ is a proper algebraic stack.
\end{thm}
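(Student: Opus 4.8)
The plan is to unpack ``proper algebraic stack'' into three ingredients --- algebraicity, finiteness of type, and the valuative criteria for separatedness and universal closedness --- and to reduce each to classical facts: representability of the stack of prestable curves and of Hom-schemes, boundedness via Hilbert polynomials, and the valuative criterion of properness for $X$. The argument is in essence that of Kontsevich \cite{Kontsevich_Enumeration_1995}; see also \cite{Fulton_Stable_1997,Abramovich_Stable_2001}. For algebraicity, I would start from the classical fact that the stack $\mathfrak M_{g,n}$ of $n$-pointed prestable genus-$g$ curves is algebraic and locally of finite type over $S$. Over its universal curve $\mathcal C\to\mathfrak M_{g,n}$, the projectivity of $X$ together with Grothendieck's theory of Hom-schemes produces an algebraic stack $\mathfrak M_{g,n}(X)$ of prestable maps into $X$, locally of finite type over $S$. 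Fixing the curve class to be $\beta$ --- a locally constant invariant in a family --- cuts out an open and closed substack, and the stability condition (iv) of \cref{def:stable_maps_(Gromov)} is an open condition, since it is equivalent to relative ampleness over $T$ of $\omega_{\mathcal C/T}\big(\sum_i s_i\big)\otimes (f^*\mathcal O_X(1))^{\otimes 3}$. Hence $\bcMgn(X/S,\beta)$ is an open substack of $\mathfrak M_{g,n}(X)$, in particular algebraic (this is also the special case of \cref{thm:algebraicity_of_stack_of_stable_maps} with $X$ projective and $\beta$ fixed). Since the automorphism group scheme of a stable map is finite, the stack has finite inertia; in characteristic zero it is moreover a \DM stack.

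For finiteness of type, I would fix an ample line bundle $\mathcal O_X(1)$, so that the fibrewise degree $d=\deg\big(f^*\mathcal O_X(1)\big)=\mathcal O_X(1)\cdot\beta$ is constant. The stability condition bounds the combinatorics of the source: a component of a geometric fibre contracted by $f$ carries at least three special points, while each non-contracted component contributes positively to $d$, so the dual graph of $C$ --- and hence all its discrete invariants --- is bounded in terms of $g$, $n$, $d$. The line bundle $\omega_{C/T}\big(\sum_i s_i\big)\otimes (f^*\mathcal O_X(1))^{\otimes 3}$ then embeds the fibres into a projective space of bounded dimension, and the graph of $f$ becomes a closed subscheme of $\bP^N_S\times_S X$ of one of finitely many Hilbert polynomials; thus every such stable map is classified by a point of a fixed Hilbert scheme of finite type over $S$, so $\bcMgn(X/S,\beta)$ is quasi-compact. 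Being also locally of finite type, it is of finite type over the field $S$.

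Since the stack is of finite type, separatedness and properness can be checked by the valuative criteria over a discrete valuation ring $R$ with fraction field $K$. For separatedness: two extensions over $\Spec R$ of a stable map given over $\Spec K$ induce two prestable curves over $\Spec R$ that are dominated by a common semistable model; on this model the two maps to $X$ agree, because they agree over $\Spec K$ and $X$ is separated, and contracting back to the stabilizations identifies the two extensions. For universal closedness: starting from a stable map over $\Spec K$, after a finite extension of $R$ semistable reduction for curves yields a prestable curve $\mathcal C\to\Spec R$ with generic fibre $C_K$ and its marked points; the map $f_K$ determines a rational map $\mathcal C\to X$ which, by the valuative criterion of properness for $X/S$, is defined away from finitely many closed points of the special fibre; one resolves its indeterminacy by blowing up those points --- equivalently, by further semistable modifications of the source --- obtaining a prestable curve carrying an honest morphism to $X$ extending $f_K$, and then contracts the unstable rational components so created to produce a stable map over $\Spec R$ restricting to the given one over $\Spec K$. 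Together with separatedness and finiteness of type, this yields properness.

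The main obstacle is the existence part of the valuative criterion: one has to couple stable reduction of the source curve with the valuative criterion for $X/S$, and then perform the indeterminacy resolution and the re-stabilization carefully enough that the output is again a stable map of class $\beta$ over $\Spec R$ --- in particular checking that the stabilization morphism behaves well in families and that the degree with respect to $\mathcal O_X(1)$ is unchanged under the contractions. For this bookkeeping I would follow the detailed treatments in \cite{Kontsevich_Enumeration_1995,Fulton_Stable_1997,Abramovich_Stable_2001}.
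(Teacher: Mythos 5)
The paper does not prove this statement at all: it is imported as a known result of Kontsevich, with the details deferred to \cite{Kontsevich_Enumeration_1995,Fulton_Stable_1997,Abramovich_Stable_2001,Behrend_Stacks_GW_1996}, and is then used as a black box in \cref{prop:properness_of_special_fiber}. Your outline (algebraicity via prestable maps and openness of stability, boundedness via the embedding by $\omega_{C/T}(\sum S_i)\otimes f^*\mathcal O_X(1)^{\otimes 3}$ and Hilbert schemes, and the valuative criteria via stable reduction plus resolution of indeterminacy and re-stabilization) is exactly the standard argument from those references, so it is correct and consistent with what the paper relies on.
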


More details can be found in \cite{Fulton_Stable_1997,Abramovich_Stable_2001,Behrend_Stacks_GW_1996}.
For our purpose, we need to work over a more general base scheme $S$ and weaken the projectivity assumption. The statement of Theorem \ref{thm:stack_of_stable_maps_projective_case} consists of two parts: algebraicity and properness. We will discuss the algebraicity in this section and the properness in \cref{sec:non-archimedean_Gromov_compactness}.

In the above-mentioned literatures, the algebraicity is shown by projective methods which rely on the assumption that $X$ is projective over $S$. More precisely, the projectivity of $X$ induces projective embeddings of the pre-stable curves at the source via very ample line bundles.
The moduli stack of stable maps is then realized as a quotient stack of a subscheme of a certain Hilbert scheme.
In order to prove the algebraicity without the projectivity assumption, we will use Artin's deformation-theoretic criterion of representability.

\begin{thm}\label{thm:algebraicity_of_stack_of_stable_maps}
Let $X$ be a scheme locally of finite presentation over a locally noetherian base scheme $S$.
The category $\bcMgn(X/S)$, fibered in groupoids over the category $\Sch_S$, of \gn stable maps into $X$ is an algebraic stack locally of finite presentation over $S$.
\end{thm}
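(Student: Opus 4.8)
The plan is to verify Artin's representability criterion directly for the fibered category $\bcMgn(X/S)$ over $\Sch_S$. I would begin with a sequence of reductions to a convenient base. The assertion is étale-local on $S$, so we may assume $S$ affine and noetherian. Since the source of a stable map is proper over the base, its image is quasi-compact, and for a fixed open $U\subseteq X$ the condition that $f$ factor through $U$ is open on $T$ because $C\to T$ is universally closed; hence the $\bcMgn(U/S)$, for $U$ a quasi-compact open subscheme of $X$, are open substacks covering $\bcMgn(X/S)$, and we may assume $X$ is of finite presentation over $S$. Finally, writing the noetherian affine $S$ as a cofiltered limit of affine schemes of finite type over $\Z$, descending $X$ to a finite stage, and using that $\bcMgn(X/S)$ commutes with base change in $S$, we reduce to the case where $S$ is excellent.

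With these reductions in place I would check Artin's axioms in turn. Descent for the étale (indeed fppf) topology follows from fppf descent for proper flat families of curves, for their sections, and for morphisms to $X$, stability being insensitive to descent. The limit-preservation axiom amounts to the statement that a stable map over $\colim_i A_i$ descends to a finite stage, which follows from the usual limit formalism (EGA IV §8) for finitely presented proper flat morphisms, for sections and for morphisms, together with the fact that stability is a condition on geometric fibers. The Rim--Schlessinger homogeneity condition holds because a prestable curve with sections equipped with a morphism to $X$ has homogeneous deformation theory, being assembled from flat families, sections, and morphisms, and openness of stability plays no role here. The deformation theory at a stable map $(C,(s_i),f)$ over a field is governed by the low-degree hyper-$\Ext$ groups of the two-term complex built from $f^{*}\bbL_{X/S}$ and $\Omega_C(\sum_i s_i)$ on the proper curve $C$; consequently the automorphism, deformation, and obstruction groups are finite-dimensional, and in families they define coherent functors compatible with arbitrary base change. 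Granting this, openness of versality over the excellent base follows formally, e.g.\ from Hall's openness-of-versality theorem. The diagonal is representable by algebraic spaces: the $\Isom$-sheaf of two stable maps over $T$ is carved out of the $\Isom$-scheme of the underlying proper flat curves --- an open subscheme of a relative Hilbert scheme, representable and of finite presentation over $T$ --- by the closed conditions of compatibility with the sections and by the immersion pulled back from the diagonal of $X$; stability makes this morphism moreover quasi-finite.

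The genuinely delicate axiom is the effectivity of formal deformations, where the non-properness of $X$ is the main obstacle. Given a compatible system $(C_n,(s_{i,n}),f_n)$ over $A/\fm^{n+1}$ with $A$ complete local noetherian, I would first algebraize the curve: the closed fiber $C_0$ is a projective nodal curve, hence carries an ample line bundle, and since line bundles on curves are unobstructed and ampleness is an open condition this extends to a compatible family of relatively ample line bundles, so Grothendieck's existence theorem produces $(C,(s_i))$ over $\Spec A$. To algebraize $f$, note that $f_0(C_0)$ is quasi-compact, hence contained in a quasi-compact open $U\subseteq X$ of finite presentation over $S$, and that each $f_n$ factors through $U$ because $C_n$ and $C_0$ have the same underlying space; one then applies Grothendieck existence to the graph $\Gamma_{f_n}\subseteq C_n\times_{A/\fm^{n+1}}U$, which is proper over the base, to obtain a closed subscheme $\Gamma_f\subseteq C\times_A U$ proper over $\Spec A$, and hence a morphism $f\colon C\to X$. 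Since stability is an open condition and $\Spec A$ is local, the only open neighborhood of its closed point is all of $\Spec A$, so stability of the closed fiber forces the whole family to be stable; thus the effectivization lies in $\bcMgn(X/S)$. Artin's criterion now yields that $\bcMgn(X/S)$ is an algebraic stack locally of finite presentation over the excellent $S$, and undoing the reductions gives the statement for an arbitrary locally noetherian $S$. (Alternatively, one could factor $\bcMgn(X/S)$ over the algebraic stack of prestable curves and invoke representability of Hom-stacks; but the direct verification sketched above is what I would carry out.)
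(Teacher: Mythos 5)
Your overall architecture is the same as the paper's: verify Artin's representability criterion, checking the stack condition, limit preservation, the Rim--Schlessinger condition, effectivity of formal objects, a cotangent-complex deformation/obstruction theory, and a (quasi-)separation condition. Where you genuinely diverge is in the implementation: the paper's engine is the observation (Remark \ref{rem:stable_maps_as_sheaves}) that a stable map is equivalent to the graded algebra $\mathcal L=\bigoplus_m f_{T*}\big(\omega_{C/T}(\sum S_i)\big)^{\otimes m}$ on $X_T$ together with $C\simeq\Proj_{X_T}(\mathcal L)$, and this single device powers descent, Rim--Schlessinger and effectivity; you instead treat each axiom directly, algebraizing the curve by extending an ample line bundle from the closed fiber and then algebraizing $f$ via Grothendieck existence with proper support applied to the graph in $C\times_A U$, and you replace the appeal to the locally noetherian generalization of Artin's theorem by a limit reduction to an excellent base plus openness of versality. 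Those alternatives are legitimate.

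The genuine gap is in the stack (descent) axiom. You assert that descent ``follows from fppf descent for proper flat families of curves, for their sections, and for morphisms to $X$, stability being insensitive to descent.'' Effectivity of descent for proper flat families of nodal curves in the category of \emph{schemes} is not automatic: the descended object is a priori only an algebraic space, and the curve underlying a stable map carries no canonical polarization over $T$ --- an unmarked rational component on which $f$ is non-constant makes $\omega_{C/T}(\sum S_i)$ of negative degree there, so it is not relatively ample over $T$. This is exactly the point the paper isolates (see the remark following Lemma \ref{lem:sheaf_property}): stability gives relative ampleness of $\omega_{C/T}(\sum S_i)$ over $X_T$, so one descends the graded algebra $\mathcal L$ (equivalently, descends $C\to X_{T'}$ along $X_{T'}\to X_T$ using descent for polarized schemes) and recovers $C$ as $\Proj_{X_T}(\mathcal L)$. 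In other words, stability is precisely what makes descent effective; your parenthetical claims the opposite, and as written this axiom is not established. The same oversight touches your diagonal argument: since $C$ is in general not projective over $T$, there is no relative Hilbert scheme over $T$ in which to realize the $\Isom$-scheme of the underlying curves unless $X$ is quasi-projective over $S$, which is the very hypothesis the theorem removes. This is repairable by working over $X_T$ (where stability does give ampleness), or avoidable altogether, as in the paper, by using the weaker quasi-separation input of Artin's criterion (Lemma \ref{lem:diagonal}): an automorphism inducing the identity on a dense set of finite-type points is the identity, by separatedness of $C\to T$.
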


The following remark enables us to describe stable maps in terms of coherent sheaves.

\begin{rem}\label{rem:stable_maps_as_sheaves}
Let $\big(C\rightarrow T,(s_i), f\big)$ be an \gn stable map into $X$ over $T$. Let $X_T=X\times_S T$, $f_T\colon C\rightarrow X_T$, and
\[\mathcal L = \bigoplus_{m\geq 0} f_{T*}\Big(\omega_{C/T}\big(\sum S_i\big)\Big)^{\otimes m},\]
where $\omega_{C/T}$ denotes the relative canonical sheaf and $S_i$ denotes the image of the section $s_i$.
Then $\mathcal L$ is a quasi-coherent sheaf of graded algebras over $X_T$ and every direct summand of $\mathcal L$ is coherent.
The stability condition implies that the invertible sheaf $\omega_{C/T}\big(\sum S_i\big)$ is relatively ample over $X_T$.
Therefore, via the relative Proj construction, we recover the total space of the family of curves $C\simeq \Proj_{X_T}(\mathcal L)$, as well as the morphisms $C\rightarrow T$ and $f\colon C\rightarrow X$.
Moreover, the sections $(s_i)$ are determined by their images $(S_i)$, whose structure sheaves are quotients of the structure sheaf of $C$.
To conclude, the data of an \gn stable map into $X$ over $T$ can be encoded entirely in terms of coherent sheaves.
Conversely, assume that we are given a quasi-coherent sheaf $\mathcal L$ of graded algebras over $X_T$ which is a direct sum of coherent sheaves, and that we are also given $n$ sections of the morphism $\Proj_{X_T}(\mathcal L)\rightarrow T$.
In order to check if they come from an \gn stable map into $X$ over $T$ by the constructions above, it suffices to verify the following conditions:
First, $\Proj_{X_T}(\mathcal L)$ is flat over $T$;
Second, every geometric fiber over $T$ is an \gn stable map into $X$ over an algebraically closed field.
We note that both conditions are fpqc local on $T$.
\end{rem}

We study the properties of the category $\bcMgn(X/S)$ in Lemmas \ref{lem:sheaf_property} - \ref{lem:deformation_obstruction}.

\begin{lem}[sheaf property]\label{lem:sheaf_property}
The category $\bcMgn(X/S)$ is a stack in groupoids over the category $\Sch_S$ for the fpqc topology.
\end{lem}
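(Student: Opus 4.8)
The plan is to verify the stack axioms for the fpqc topology in the usual two-step fashion: first descent for morphisms (the presheaf $T \mapsto \Isom_T(\xi,\xi')$ is an fpqc sheaf), then effectivity of descent data for objects. By Remark \ref{rem:stable_maps_as_sheaves}, an \gn stable map into $X$ over $T$ is the same datum as a quasi-coherent sheaf $\mathcal L$ of graded $\mathcal O_{X_T}$-algebras that decomposes as a direct sum of coherent sheaves, together with $n$ sections of $\Proj_{X_T}(\mathcal L) \to T$, subject to the two conditions (flatness over $T$; each geometric fiber being a stable map), and that remark explicitly notes that both conditions are fpqc-local on $T$. So the strategy is to transport the problem entirely to descent for quasi-coherent sheaves and their morphisms, where the classical theory applies.

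First I would treat the Isom-presheaf. Given two stable maps $\xi = (C \to T, (s_i), f)$ and $\xi' = (C' \to T, (s_i'), f')$ over $T$, and an fpqc cover $T' \to T$, a morphism $\xi \to \xi'$ is by definition an isomorphism $C \to C'$ over $T$ commuting with sections and with $f, f'$. Under the dictionary of Remark \ref{rem:stable_maps_as_sheaves} this corresponds to an isomorphism of the associated graded $\mathcal O_{X_T}$-algebras $\mathcal L \to \mathcal L'$ respecting the sections' defining ideals. Since quasi-coherent sheaves on $X_T$, morphisms between them, and the property of a morphism being an isomorphism all satisfy fpqc descent (this is fpqc descent for quasi-coherent sheaves, e.g.\ \cite{EGA1} or standard references), and since the conditions cutting out ``morphism of stable maps'' among all algebra maps are closed conditions checkable after faithfully flat base change, the presheaf $\Isom$ is an fpqc sheaf. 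Concretely: a morphism over $T'$ whose two pullbacks to $T' \times_T T'$ agree descends to a unique morphism over $T$, and descent of the isomorphism property is again flat descent.

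Next, effectivity of descent data. Suppose $T' \to T$ is an fpqc cover and we are given a stable map $\xi'$ over $T'$ with a descent datum over $T'' = T' \times_T T'$. Passing to the sheaf-theoretic description, we obtain a quasi-coherent graded algebra $\mathcal L'$ on $X_{T'}$ with an isomorphism $p_1^*\mathcal L' \simeq p_2^*\mathcal L'$ on $X_{T''}$ satisfying the cocycle condition, together with descended section data. By fpqc descent for quasi-coherent sheaves on the $X_T$-side, $\mathcal L'$ descends to a quasi-coherent graded $\mathcal O_{X_T}$-algebra $\mathcal L$; one checks that the ``direct sum of coherent sheaves'' structure descends as well, since each graded piece is a coherent sheaf and coherence descends along faithfully flat morphisms. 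The $n$ sections descend similarly. It then remains to see that the descended datum still defines a stable map, i.e.\ that $\Proj_{X_T}(\mathcal L) \to T$ is flat and that its geometric fibers are stable maps — but geometric fibers of the descended family are among the geometric fibers of the family over $T'$ (the geometric points of $T$ are images of geometric points of $T'$, which is faithfully flat hence surjective), and flatness over $T$ follows from flatness over $T'$ by faithfully flat descent of flatness. Hence the descended object is a genuine stable map restricting to $\xi'$, and uniqueness comes from the sheaf property already established.

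The main obstacle, such as it is, is bookkeeping rather than any deep input: one must be careful that the equivalence of Remark \ref{rem:stable_maps_as_sheaves} is genuinely functorial and compatible with pullback, so that descent data for stable maps translate faithfully into descent data for the graded algebra $\mathcal L$ together with its section ideals, and that the relative $\Proj$ construction commutes with the base changes $T'' \rightrightarrows T' \to T$ involved. Once that compatibility is pinned down, everything reduces to fpqc descent for quasi-coherent sheaves and for the properties ``flat'' and ``coherent,'' all of which are classical. I would therefore organize the proof as: (1) recall the translation of Remark \ref{rem:stable_maps_as_sheaves} and note it commutes with base change; (2) deduce the sheaf property of $\Isom$ from flat descent for morphisms of quasi-coherent sheaves; (3) deduce effectivity from flat descent for quasi-coherent sheaves, plus descent of flatness and coherence, plus the fact (from Remark \ref{rem:stable_maps_as_sheaves}) that the stable-map conditions are fpqc-local.
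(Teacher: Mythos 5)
Your proposal is correct and follows essentially the same route as the paper: both rest on Remark \ref{rem:stable_maps_as_sheaves} (stability gives relative ampleness of $\omega_{C/T}(\sum S_i)$ over $X_T$, so the stable map is encoded by the graded algebra $\mathcal L$ and the sections, with the defining conditions fpqc-local on $T$). The only difference is that the paper packages the effectivity step by citing descent for schemes with a relatively ample sheaf (SGA1, VIII, Prop.\ 7.8, plus Thm.\ 5.2 for the sections), whereas you reprove that input by hand via fpqc descent of quasi-coherent sheaves and the relative $\Proj$ construction.
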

\begin{proof}
The Isom presheaf is clearly a sheaf on the category $\Sch_S$ for the fpqc topology.
Let us now check the effectiveness of descent. 
Let $\pi\colon T'\rightarrow T$ be an fpqc covering and let $\big(C'\rightarrow T',(s'_i),f'\big)$ be a stable map into $X$ over $T'$ equipped with descent data relative to $\pi$. Denote $X_T=X\times_S T, X_{T'}=X\times_S T'$.
By base change, we obtain an fpqc covering $\pi_X\colon X_{T'}\rightarrow X_T$ and a morphism $C'\rightarrow X_{T'}$ equipped with descent data relative to $\pi_X$. Denote by $S'_i$ the image of $s'_i$.
As in Remark \ref{rem:stable_maps_as_sheaves}, the stability condition implies that the invertible sheaf $\omega_{C'/T'}\big(\sum S'_i\big)$ is relatively ample over $X_{T'}$. Therefore, the morphism $C'\rightarrow X_{T'}$ descends to a morphism $C\rightarrow X_T$ by \cite{SGA1} Chapitre VIII Proposition 7.8. So the morphism $f'\colon C'\rightarrow X$ descends to a morphism $f\colon C\rightarrow X$ as well. The sections $s_i'\colon T'\rightarrow C'$ descend to sections $s_i\colon T\rightarrow C$ by Theorem 5.2 in loc.\ cit.
The conditions for $\big(C'\rightarrow T',(s'_i),f'\big)$ to be a stable map also descend effectively to the triple $\big(C\rightarrow T,(s_i), f\big)$ by Remark \ref{rem:stable_maps_as_sheaves}. So we have proved the lemma.
\end{proof}

\begin{rem}
One may try to descend the family $C'\rightarrow T'$ of pre-stable curves with respect to the fpqc covering $T'\rightarrow T$ in the beginning. However, this is not possible à priori in the category of schemes instead of algebraic spaces. The stability condition ensures the effectiveness of descent in Lemma \ref{lem:sheaf_property}.
\end{rem}

\begin{lem}[limit preserving]\label{lem:limit_preserving}
The stack $\bcMgn(X/S)$ over the category $\Sch_S$ is limit preserving, i.e.\ the canonical map
\[\varinjlim\bcMgn(X/S)(T_i) \longrightarrow \bcMgn(X/S)(\varprojlim T_i)\]
is an equivalence of groupoids for any inverse system of affine schemes $\{T_i\}$.
\end{lem}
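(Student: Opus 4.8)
The plan is to verify separately that the canonical functor $\Phi\colon\varinjlim_\lambda\bcMgn(X/S)(T_\lambda)\to\bcMgn(X/S)(T)$ is fully faithful and essentially surjective, where I write the inverse system as $\{T_\lambda\}$, $T_\lambda=\Spec A_\lambda$, $T=\varprojlim_\lambda T_\lambda=\Spec A$ and $A=\varinjlim_\lambda A_\lambda$. The tools are the standard limit theorems for finitely presented morphisms and modules over a varying base, \cite{EGA4} (notably 8.5.2, 8.8.2, 8.10.5, 11.2.6, and the constructibility results of \S 9 and \S 12). First I would record a harmless reduction: a family of curves as in \cref{def:stable_maps_(Gromov)} is proper over its affine base, hence quasi-compact, so the image of any stable map into $X$ lies in a quasi-compact open subscheme of $X$, and such an open is of finite presentation over $S$; since each assertion below involves only finitely many families at a time, I may assume from the outset that $X$ is of finite presentation over the locally noetherian base $S$. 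The inverse system $\{T_\lambda\}$ of affine $S$-schemes has affine transition morphisms, so the cited limit theorems apply.

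For full faithfulness, suppose two objects over $T$ are pulled back from objects $\bigl(C_0\to T_{\lambda_0},(s_i),f_0\bigr)$ and $\bigl(C_0'\to T_{\lambda_0},(s_i'),f_0'\bigr)$ over some index $\lambda_0$. As $C_0$ and $C_0'$ are proper and finitely presented over $T_{\lambda_0}$, the functor $\Isom_{T_{\lambda_0}}(C_0,C_0')$ is representable by a $T_{\lambda_0}$-scheme of finite presentation whose formation commutes with base change; imposing compatibility with the marked sections and with $f_0,f_0'$ cuts out a closed subscheme $\cI_0$ of finite presentation over $T_{\lambda_0}$. An isomorphism between the two objects over $T$ is exactly a $T$-point of $\cI_0$, hence it is pulled back from a $T_\lambda$-point for some $\lambda\geq\lambda_0$, uniquely after enlarging $\lambda$ (\cite{EGA4} 8.8.2). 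This gives fullness and faithfulness of $\Phi$; alternatively one argues with $\Hom$ and $\Isom$ of the coherent graded algebras on $X_{T_{\lambda_0}}$ attached to the two objects as in \cref{rem:stable_maps_as_sheaves}.

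For essential surjectivity, take a stable map $\bigl(C\to T,(s_i),f\bigr)$ over $T$; then $C\to T$ is proper, flat and finitely presented and $f$ is a morphism to the finitely presented $T$-scheme $X_T$. By \cite{EGA4} \S 8 (descent of finitely presented schemes, together with 8.10.5 for the descent of properness and 11.2.6 for that of flatness), there are an index $\lambda_0$ and a proper, flat, finitely presented morphism $C_0\to T_{\lambda_0}$ inducing $C\to T$, with marked sections inducing the $(s_i)$, and by \cite{EGA4} 8.8.2 a morphism $f_0\colon C_0\to X$ over $T_{\lambda_0}$ inducing $f$. Consider the locus $V\subseteq T_{\lambda_0}$ over which the geometric fibers of $C_0\to T_{\lambda_0}$ are reduced with at worst nodes and of arithmetic genus $g$, the sections are disjoint and land in the smooth locus, and the stability condition \cref{def:stable_maps_(Gromov)}(iv) holds for $f_0$: each of these is an open condition on a flat proper finitely presented family of curves with a map to $X$, so $V$ is open. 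The given stable map over $T$ shows that $T\to T_{\lambda_0}$ factors through $V$, i.e.\ the reduced closed complement $Z=T_{\lambda_0}\setminus V$ pulls back to the empty scheme over $T$; since a filtered colimit of rings vanishes only if one of them already does, $Z$ pulls back to $\emptyset$ over some $T_\lambda$. Then the base change to that $T_\lambda$ of $\bigl(C_0\to T_{\lambda_0},(s_i),f_0\bigr)$ is a stable map into $X$ over $T_\lambda$ whose pullback to $T$ is the given one, which proves essential surjectivity.

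I expect the difficulty to be organizational rather than conceptual: the proof is an assembly of standard finiteness and limit results of \cite{EGA4}, and the two points deserving care are (a) the reduction to $X$ of finite presentation over $S$, handled above by using properness of the source curve, and (b) checking that the descended family is an honest scheme rather than merely an algebraic space or formal object. Point (b) is automatic by the coherent-sheaf description of \cref{rem:stable_maps_as_sheaves}: the stability condition makes $\omega_{C/T}\bigl(\sum S_i\bigr)$ relatively ample over $X_T$, so the family is the relative $\Proj$ over $X_T$ of a coherent graded algebra, and it suffices to descend that algebra, which is governed by \cite{EGA4} 8.5.2. Together with \cref{lem:sheaf_property}, this lemma supplies the ``limit preserving'' hypothesis needed when verifying Artin's representability criterion for \cref{thm:algebraicity_of_stack_of_stable_maps}.
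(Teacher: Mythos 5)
Your proposal is correct and is essentially the paper's argument: the paper disposes of this lemma in one line by invoking the standard limit theorems for schemes of finite presentation (\cite{EGA4-3} Theorem 8.8.2), and your write-up is just a careful unpacking of that citation (reduction to $X$ quasi-compact via properness of the source curve, spreading out the family, sections and map, and openness of the nodal/genus/stability conditions). No gap to report.
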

\begin{proof}
This follows from the standard properties of schemes of finite presentation (cf.\ \cite{EGA4-3} Theorem 8.8.2).
\end{proof}

\begin{lem}[Rim-Schlessinger condition for small affine pushouts (cf.\ \cite{stacks-project} Tag 07WP)]\label{lem:Schlessinger}
Let $T,T_1,T_2,T_{12}$ be spectra of local Artinian rings of finite type over $S$. Assume that $T\rightarrow T_1$ is a closed immersion and that
\begin{equation*}
\begin{tikzcd}
T \arrow{r} \arrow{d}
& T_1 \arrow{d}\\
T_2 \arrow{r}
& T_{12}=T_1\coprod_T T_2
\end{tikzcd}
\end{equation*}
is a pushout diagram in the category $\Sch_S$.
Then the canonical map
\[\bcMgn(X/S)(T_{12})\xrightarrow{\ \sim\ }\bcMgn(X/S)(T_1)\times_{\bcMgn(X/S)(T)} \bcMgn(X/S)(T_2)\]
is an equivalence of groupoids.
\end{lem}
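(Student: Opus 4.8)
The plan is to deduce the statement from the description of stable maps in terms of coherent sheaves given in Remark~\ref{rem:stable_maps_as_sheaves}, together with the standard patching lemmas for flat sheaves and flat algebras over a fibre-product ring (due to Schlessinger and Ferrand; see \cite{stacks-project}). Write $T=\Spec A$, $T_1=\Spec A_1$, $T_2=\Spec A_2$. The hypothesis that $T\to T_1$ is a closed immersion means $A_1\to A$ is surjective, so the pushout is $T_{12}=\Spec A_{12}$ with $A_{12}=A_1\times_A A_2$; the two projections $A_{12}\to A_1$ and $A_{12}\to A_2$ are then surjective, $A_{12}$ is again local Artinian of finite type over $S$, and all four rings have the same residue field, so the four schemes $T,T_1,T_2,T_{12}$ share a single geometric fibre. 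The patching lemma I will use asserts that pullback is an equivalence from the category of quasi-coherent $\mathcal O_{X_{T_{12}}}$-modules (resp.\ graded algebras) that are flat over $T_{12}$ onto the $2$-fibre product of the corresponding categories over $T_1$ and over $T_2$ along the category over $T$; its quasi-inverse glues $(\mathcal M_1,\mathcal M_2,\varphi)$ to the fibre product $\mathcal M_1\times_{\mathcal M}\mathcal M_2$, formed via the closed immersions $X_{T_\alpha}\hookrightarrow X_{T_{12}}$, and flatness of this fibre product over $T_{12}$ is obtained by tensoring the exact sequence $0\to A_{12}\to A_1\oplus A_2\to A\to 0$ against it, using surjectivity of $A_1\to A$.

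With this tool, I would first prove essential surjectivity. An object of the right-hand groupoid is a pair of \gn stable maps $\big(C_\alpha\to T_\alpha,(s^{(\alpha)}_i),f_\alpha\big)$, $\alpha=1,2$, together with an identification of their restrictions to $T$. Encoding each as in Remark~\ref{rem:stable_maps_as_sheaves} by the graded algebra $\mathcal L_\alpha$ on $X_{T_\alpha}$, which is flat over $T_\alpha$ and a direct sum of coherent sheaves, together with the sections, and applying the patching lemma summand by summand, I obtain a graded $\mathcal O_{X_{T_{12}}}$-algebra $\mathcal L_{12}$, flat over $T_{12}$, a direct sum of coherent sheaves, restricting to $\mathcal L_\alpha$ over $X_{T_\alpha}$, together with $n$ section data. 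Setting $C_{12}=\Proj_{X_{T_{12}}}(\mathcal L_{12})$ — the relative $\Proj$ of $\mathcal L_{12}$, equivalently the scheme-theoretic pushout $C_1\coprod_{C_T}C_2$ of the families — the compatibility of $\Proj$ with base change gives $C_{12}\times_{T_{12}}T_\alpha\cong C_\alpha$, so $C_{12}\to T_{12}$ is proper, flat and of finite presentation, and it carries a morphism $f_{12}\colon C_{12}\to X$ and disjoint sections $s^{(12)}_i$ restricting to $f_\alpha$ and $s^{(\alpha)}_i$. It remains to check conditions (i)--(iv) of Definition~\ref{def:stable_maps_(Gromov)}: condition (i) is the flatness and properness just obtained, and conditions (ii), (iii) and the stability condition (iv) bear only on geometric fibres over $T_{12}$, hence hold because they hold for $C_1$, whose geometric fibre is the same. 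This yields the required preimage.

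Full faithfulness is then a formal matter: a morphism of stable maps over $T_{12}$ is in particular a morphism of $T_{12}$-schemes $C_{12}\to C'_{12}$ compatible with the sections and with the maps to $X$, and since $C_{12}$ is the pushout $C_1\coprod_{C_T}C_2$ (equivalently, by applying the patching equivalence to the relevant Isom-sheaves), such a morphism is exactly a compatible pair of morphisms over $T_1$ and over $T_2$; moreover one already knows from Lemma~\ref{lem:sheaf_property} that $\bcMgn(X/S)$ is a stack, so the relevant Isom-presheaves are sheaves and only this groupoid bookkeeping remains. Hence the comparison functor is an equivalence of groupoids.

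The main obstacle, and the only genuine input beyond bookkeeping, is the patching step: one must know that forming $\mathcal L_{12}=\mathcal L_1\times_{\mathcal L}\mathcal L_2$ preserves flatness over the base and finite presentation, and that the relative ample line bundle $\omega_{C/T}\big(\sum S_i\big)$, through which the curve is reconstructed by the relative $\Proj$ construction in Remark~\ref{rem:stable_maps_as_sheaves}, also patches. Flatness over $A_{12}$ is exactly the Schlessinger--Ferrand computation recalled above and uses the surjectivity of $A_1\to A$ in an essential way, while relative ampleness over $X_{T_{12}}$ can be tested on the common geometric fibre of the four bases. Once these two points are in place, the rest follows as indicated, because every condition of Definition~\ref{def:stable_maps_(Gromov)} other than flatness and properness is fibral and the geometric fibre does not change under the pushout.
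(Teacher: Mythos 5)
Your argument is correct and follows essentially the same route as the paper: encode stable maps via coherent sheaves as in Remark~\ref{rem:stable_maps_as_sheaves} and invoke the Schlessinger--Ferrand gluing (pushout) property for quasi-coherent modules flat over $T_{12}$, which is exactly the Stacks Project result the paper cites. You merely spell out the details (flatness via the sequence $0\to A_{12}\to A_1\oplus A_2\to A\to 0$, relative $\Proj$, and the fibral nature of conditions (ii)--(iv)) that the paper's one-line proof leaves implicit.
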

\begin{proof}
Using Remark \ref{rem:stable_maps_as_sheaves}, the pushout property for stable maps follows from the pushout property for quasi-coherent sheaves (cf.\ \cite{stacks-project} Tag 08LQ, Tag 08IW).
\end{proof}

\begin{lem}[effectiveness of formal objects]\label{lem:effectiveness}
Let $\widehat A$ be a complete local algebra over $S$, with maximal ideal $\mathfrak m$, and with residue field of finite type over $S$. Then the canonical map
\[\bcMgn(X/S)(\widehat A)\longrightarrow\varprojlim_l \bcMgn(X/S)(\widehat A/\mathfrak m^l)\]
is an equivalence of groupoids.
\end{lem}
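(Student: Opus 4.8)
The plan is to reduce the statement to Grothendieck's existence theorem by means of the description of stable maps in terms of coherent sheaves supplied by \cref{rem:stable_maps_as_sheaves}. Write $A_l=\widehat A/\mathfrak m^{l}$, $T_l=\Spec A_l$, $T=\Spec\widehat A$, and let $k_0=\widehat A/\mathfrak m$ be the residue field; here $\widehat A$ is a complete noetherian local ring, so Grothendieck's existence theorem is available for any proper scheme over $T$. By \cref{rem:stable_maps_as_sheaves}, a compatible system $\big\{(C_l\to T_l,(s_{i,l}),f_l)\big\}_{l\ge1}$ of \gn stable maps into $X$ is the same datum as a compatible system of quasi-coherent graded $\mathcal O_{X_{T_l}}$-algebras $\mathcal L_l=\bigoplus_{m\ge0}f_{l*}\big(\omega_{C_l/T_l}(\sum S_{i,l})\big)^{\otimes m}$, each graded piece being coherent, together with $n$ sections of $\Proj_{X_{T_l}}\mathcal L_l\to T_l$, subject to the conditions that this $\Proj$ is flat over $T_l$ and that each geometric fiber is a stable map over a field. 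The idea is to algebraize this datum over $T$ and then recover a stable map by relative $\Proj$; this yields essential surjectivity of the functor, and full faithfulness will follow by the same mechanism.

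The decisive point, which makes the existence theorem applicable even though $X$ is not proper over $S$, is that each coherent sheaf $\mathcal L_l^{(m)}=f_{l*}\big(\omega_{C_l/T_l}(\sum S_{i,l})\big)^{\otimes m}$ is supported on the scheme-theoretic image $Z_l$ of $f_l$, and $Z_l\to T_l$ is proper because $C_l\to T_l$ is (after shrinking $X$ to a quasi-compact open neighbourhood of the image $Z_0$ of the closed fiber, taken separated over $S$; alternatively one works with algebraic spaces throughout). Since the $Z_l$ form a compatible family of closed subschemes proper over $T_l$ with fixed reduced underlying scheme $Z_0$, Grothendieck's existence theorem applies to the sheaves $\mathcal L_l^{(m)}$ and produces, for each $m$, a unique coherent $\mathcal O_{X_T}$-module $\widetilde{\mathcal L}^{(m)}$, supported on a $T$-proper closed subscheme, with $\widetilde{\mathcal L}^{(m)}\otimes_{\widehat A}A_l\cong\mathcal L_l^{(m)}$. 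Full faithfulness in the same theorem upgrades the graded multiplications to an $\mathcal O_{X_T}$-algebra structure on $\widetilde{\mathcal L}\coloneqq\bigoplus_m\widetilde{\mathcal L}^{(m)}$; setting $\widetilde C\coloneqq\Proj_{X_T}\widetilde{\mathcal L}$, which is proper over $T$ since locally on $X_T$ a Veronese subalgebra of $\widetilde{\mathcal L}$ is finitely generated in degree one, the existence theorem applied once more to the coherent quotients of $\mathcal O_{\widetilde C}$ cutting out the sections $s_{i,l}$ algebraizes them to $n$ sections $\widetilde s_i\colon T\to\widetilde C$. By construction, base change along $T_l\hookrightarrow T$ recovers $(C_l\to T_l,(s_{i,l}),f_l)$; here $\widetilde f$ denotes the composition $\widetilde C\to X_T\to X$.

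It remains to verify that $(\widetilde C\to T,(\widetilde s_i),\widetilde f)$ is a stable map into $X$ over $T$; by \cref{rem:stable_maps_as_sheaves} it suffices to check that $\widetilde C/T$ is flat and that the geometric fibers are stable maps. Flatness of $\widetilde C/T$ reduces to flatness of $\widetilde{\mathcal L}$ over $\widehat A$, which follows from the local criterion of flatness, since the reductions $\widetilde{\mathcal L}\otimes_{\widehat A}A_l=\mathcal L_l$ are flat over $A_l$. For the fiberwise condition, observe that $(\widetilde C\to T,(\widetilde s_i))$ is a flat family of prestable curves whose locus of stable-map fibers is an open subset of $T$ containing the closed point; as $\Spec\widehat A$ is local, this locus is all of $T$. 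This proves essential surjectivity. For full faithfulness, an isomorphism between two such objects over $\widehat A$ is, by \cref{rem:stable_maps_as_sheaves}, the same as an isomorphism of the associated graded-algebra-with-sections data over $X_T$; all the relevant sheaves of homomorphisms are coherent and supported on closed subschemes proper over $T$, so Grothendieck's existence theorem again shows that isomorphisms over $\widehat A$ correspond bijectively to compatible systems of isomorphisms over the $A_l$.

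The main obstacle is precisely the non-properness of $X$ over $S$: the existence theorem cannot be invoked directly on $X_T$, and the point of routing everything through the coherent sheaves of \cref{rem:stable_maps_as_sheaves} is that every sheaf one must algebraize --- the graded pieces of $\mathcal L$, the quotients defining the sections, and the sheaves of homomorphisms and cokernels entering the checks of the algebra structure, of surjectivity, and of flatness --- is supported on a closed subscheme of $X_T$ that is proper over $T$, where the existence theorem does apply. Granting this, the remaining verifications are the routine combination of Nakayama's lemma, completeness, and the observation that $\Spec\widehat A$ has a single closed point, so that any open or closed condition on the base holding over the closed fiber holds over all of $\Spec\widehat A$.
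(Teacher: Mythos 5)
Your proof is correct and follows essentially the same route as the paper: encode the compatible system of stable maps as graded coherent algebras with sections via \cref{rem:stable_maps_as_sheaves}, algebraize by Grothendieck's existence theorem, recover the family by relative $\Proj$, and conclude stability from openness of the condition on the local base, with full faithfulness again by the existence theorem. Your additional care about the non-properness of $X$ (working with sheaves supported on the proper image of the curve) is a useful elaboration of a point the paper's proof leaves implicit, but it is not a different method.
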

\begin{proof}
Given a formal object $\Big\{\big(C^{(l)}\rightarrow\Spec\widehat A/\mathfrak m^l,(s_i^{(l)}),f^{(l)}\big)\Big\}$ on the right hand side, using Remark \ref{rem:stable_maps_as_sheaves}, by Grothendieck's existence theorem for formal sheaves, one can construct a triple $\big(\widehat C\rightarrow\Spec\widehat A,(\widehat s_i),\widehat f\big)$ that induces the formal object by restrictions.
It remains to see that the triple $\big(\widehat C\rightarrow\Spec\widehat A,(\widehat s_i),\widehat f\big)$ is a stable map into $X$ over $\Spec\widehat A$. This follows from the fact that the stability condition (Definition \ref{def:stable_maps_(Gromov)}(iv)) for a pre-stable map $\big(C\rightarrow T, (s_i), f\big)$ into $X$ over $T$ is an open condition on the base $T$.
So we have shown that the canonical map in the statement of the lemma is essentially surjective.
The full faithfulness is also implied by Grothendieck's existence theorem along the same reasoning.
\end{proof}

\begin{lem}[local quasi-separation]\label{lem:diagonal}
Let $\big(C\rightarrow T, (s_i), f\big)$ be an element of $\bcMgn(X/S)(T)$ and let $\phi$ be an automorphism of the element.
If $\phi$ induces the identity in $\bcMgn(X/S)(t)$ for a dense set of points $t\in T$ of finite type, then $\phi$ is the identity.
\end{lem}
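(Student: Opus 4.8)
The plan is to package $\phi$ into the automorphism group of the family and then use that this group is \emph{unramified} over the base, which is the only place the stability condition is needed. Write $p\colon C\to T$ for the structure morphism and let $\underline{\Aut}\to T$ be the automorphism group algebraic space of the \gn stable map $\big(C\to T,(s_i),f\big)$, so that for a $T$-scheme $T'$ the set $\underline{\Aut}(T')$ is the group of automorphisms of the pullback over $T'$. It is representable by an algebraic space separated and of finite presentation over $T$: this is standard because $p$ is proper, flat and of finite presentation, with separatedness being the classical fact that the automorphism (or $\mathrm{Isom}$) functor of a proper flat family is separated over the base. Both $\phi$ and the identity define sections $\phi,e\colon T\to\underline{\Aut}$, and the assertion of the lemma is exactly that $\phi=e$.

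The crux is that $\underline{\Aut}\to T$ is \emph{unramified}, equivalently that the infinitesimal automorphisms of an \gn stable map vanish; this is where Condition~(iv) of Definition~\ref{def:stable_maps_(Gromov)} enters. Indeed, an infinitesimal automorphism is a vector field on the (normalized) source curve which vanishes at the marked and nodal points and is annihilated by $df$; on a component not contracted by $f$ the differential $df$ is generically injective, so the field vanishes there, while a contracted component has genus $0$ with at least three special points or genus $1$ with at least one special point, on which there is no nonzero such vector field. (Without stability the lemma is false: a nontrivial translation on a family of elliptic curves over $\Spec k[\epsilon]/(\epsilon^2)$ is trivial on the fibre.) Granting unramifiedness, the diagonal $\Delta\colon\underline{\Aut}\to\underline{\Aut}\times_T\underline{\Aut}$ is an open immersion, while separatedness makes it a closed immersion; hence the equalizer
\[
V \;=\; T\times_{(\phi,e),\,\underline{\Aut}\times_T\underline{\Aut},\,\Delta}\underline{\Aut}
\]
is simultaneously an open and a closed subscheme of $T$. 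A point $t\in T$ lies in $V$ precisely when the images of $\phi$ and $e$ in $\underline{\Aut}$ become equal over $\kappa(t)$, i.e.\ precisely when $\phi$ induces the identity in $\bcMgn(X/S)(t)$; thus $V$ contains the given dense set $D$. Being closed and containing a dense subset, $|V|$ is all of $T$, and being an open subscheme of $T$ it therefore equals $T$. Hence $\phi=e$, so $\phi$ is the identity automorphism.

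I expect the unramifiedness input of the second paragraph---the vanishing of infinitesimal automorphisms of stable maps---to be the only real obstacle; everything else is formal, and the hypotheses that $D$ is dense and consists of finite-type points are used only through the triviality that a closed subset containing a dense set is the whole space. If one prefers to bypass the representability of $\underline{\Aut}$, one can argue more directly: since $p$ is open (flat and locally of finite presentation) and the fibres $C_t$ are reduced, the locus $\{t:\phi_t\neq\id\}$ equals the image under $p$ of the open set $C\setminus E$, where $E\subseteq C$ is the closed equalizer of $\phi$ and $\id_C$; so this locus is open, and as it misses $D$ it is empty, whence $\phi$ restricts to the identity on every fibre and therefore on $C\red=C\times_T T\red$. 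One then clears the residual nilpotents by a finite induction on square-zero thickenings of $T\red$ using again the vanishing of infinitesimal automorphisms, after reducing to the noetherian case by the limit-preserving property of Lemma~\ref{lem:limit_preserving}.
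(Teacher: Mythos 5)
The pivot of your argument---that infinitesimal automorphisms of a stable map vanish, hence $\underline{\Aut}\to T$ is unramified and the locus $V$ where the two sections $\phi,e$ agree is open as well as closed---is false in the generality in which this lemma is needed. Here $S$ is an arbitrary locally noetherian scheme, and in the application (\cref{prop:moduli_of_formal_stable_maps}) the lemma is invoked over $S_m=\Spec(k^\circ/\varpi^{m+1})$, whose residue field may have positive characteristic. In characteristic $p$ your step ``on a component not contracted by $f$ the differential $df$ is generically injective'' fails for inseparable maps: the Frobenius $\bbP^1\to\bbP^1$ with $g=n=0$ is a stable map (nothing is contracted, so condition (iv) of \cref{def:stable_maps_(Gromov)} is vacuous) with $df=0$, so every global vector field on $\bbP^1$ is an infinitesimal automorphism and $\underline{\Aut}$ is not unramified over the base. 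This is exactly what the paper records in the remark closing \cref{sec:artin_criteria}: only in characteristic zero are there no infinitesimal automorphisms; in general the diagonal of $\bcMgn(X/S)$ is merely quasi-finite. With unramifiedness gone, $V$ is only closed, the open--closed argument collapses, and the same vanishing statement is what powers the square-zero induction in your fallback paragraph, so that route collapses too. (It is not a reparable citation issue: over $T=\Spec k[\varepsilon]/(\varepsilon^2)$ in characteristic $p$, the automorphism $x\mapsto x+\varepsilon$ of the Frobenius example commutes with $f$ and is trivial on the closed fibre, so no deformation-theoretic patch can force triviality over nilpotents; the content of the lemma lives at the level of reduced bases.)

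By contrast, the paper's proof uses no deformation theory and no automorphism space: since $C\to T$ is flat and locally of finite presentation, hence open, the fibres over the given dense set of finite-type points of $T$ supply a dense set of finite-type points of $C$ on which $\phi$ agrees with $\id_C$, and one concludes from the separatedness of $C$ over $T$ (the equalizer of $\phi$ and $\id_C$ is closed). Your ``more direct'' variant in the last paragraph reproduces precisely this first half---openness of the locus where $\phi_t\neq\id$, hence triviality on all fibres, using that the fibres are reduced---and up to that point it is correct and essentially the paper's argument, with the added (unnecessary, but harmless) input of representability of $\underline{\Aut}$ in the first version; the divergence, and the gap, is only in the appeal to vanishing of infinitesimal automorphisms to clear nilpotents.
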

\begin{proof}
If $\phi$ induces the identity in $\bcMgn(X/S)(t)$ for a dense set of points $t\in T$ of finite type, then $\phi$ is the identity on a dense set of points of finite type on $C$. Therefore, $\phi$ is the identity because $C$ is separated over $T$.
\end{proof}

The following lemma describes the deformation theory of the category $\bcMgn(X/S)$.

\begin{lem}[deformation and obstruction]\label{lem:deformation_obstruction}
Let $A$ be an $S$-algebra, $M$ an $A$-module, and $x=\big(C\rightarrow\Spec A,(s_i),f\big)$ an element of $\bcMgn(X/S)(\Spec A)$. Denote by $A[M]$ the $A$-algebra whose underlying $A$-module is $A\oplus M$ and whose multiplication is given by $(a,m)\cdot(a',m')=(aa',am'+a'm)$.
We consider the deformation situation $(A[M]\twoheadrightarrow A, x)$. Denote by $L$ the relative cotangent complex $L_{C/X\times_S\Spec A}$ and by $\mathcal N$ the normal sheaf of the union of the images of the sections $(s_i)$ in $C$. We have the following:
\begin{enumerate}[(i)]
\item The module of infinitesimal automorphisms
$\Aut_x(M)$ sits in the exact sequence
\[0\longrightarrow \Aut_x(M)\longrightarrow \Hom(L,\mathcal O_C\otimes_A M)\longrightarrow H^0(C, \mathcal N\otimes M).\]
\item The module of infinitesimal deformations $D_x(M)$ sits in the exact sequence
\[\Hom(L,\mathcal O_C\otimes_A M)\rightarrow H^0(C, \mathcal N\otimes M)\rightarrow D_x(M)\rightarrow \Ext^1(L,\mathcal O_C\otimes_A M)\rightarrow 0.\]
\item The module of obstructions $\mathcal O_x(M)$ can be given as $\Ext^2(L,\mathcal O_C\otimes_A M)$.
\end{enumerate}
\end{lem}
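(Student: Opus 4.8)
The plan is to decompose the deformation problem for the triple $x=\big(C\to\Spec A,(s_i),f\big)$ into two pieces — the deformation problem for the morphism $f$ with the marked sections forgotten, and the deformation problem for the sections $(s_i)$ relative to a fixed deformation of $f$ — and then to splice the two together by a long exact sequence of deformation functors induced by the forgetful map. Throughout I write $X_A=X\times_S\Spec A$, so that $f$ factors as $C\to X_A\to\Spec A$ and $L=L_{C/X_A}$.

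First I would handle the morphism. For the square-zero extension $A\to A[M]$ with square-zero ideal $M$, base change produces a square-zero extension $X_A\hookrightarrow X_{A[M]}$ whose ideal sheaf is $\mathcal O_{X_A}\otimes_A M$, pulling back along $f$ to the sheaf $\mathcal O_C\otimes_A M$ on $C$. By Illusie's deformation theory of a morphism over a varying base, the deformations $\widetilde C$ of $C$ that are flat over $A[M]$ and carry a morphism to the fixed deformation $X_{A[M]}$ restricting to $(C,f)$ are governed by $L=L_{C/X_A}$ with coefficients in $\mathcal O_C\otimes_A M$: the obstruction to existence lies in $\Ext^2_C(L,\mathcal O_C\otimes_A M)$; when it vanishes, the set of such deformations is a torsor under $\Ext^1_C(L,\mathcal O_C\otimes_A M)$; and the automorphisms of a given deformation inducing the identity on $(C,f)$ form the group $\Hom_C(L,\mathcal O_C\otimes_A M)=\Ext^0_C(L,\mathcal O_C\otimes_A M)$.

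Next I would handle the sections and then assemble. By condition~(iii) of Definition~\ref{def:stable_maps_(Gromov)}, each $s_i$ lands in the smooth locus of $C\to\Spec A$; since that locus is stable under infinitesimal deformation, for any deformation $\widetilde C$ the morphism $\widetilde C\to\Spec A[M]$ is smooth along the image of $s_i$, so $s_i$ always lifts — the sections are unobstructed — and the set of its lifts is a torsor under $\Hom_{\mathcal O_{\Spec A}}(s_i^*\Omega_{C/A},M)\cong H^0(\Spec A,\mathcal N_{s_i}\otimes_A M)$, where $\mathcal N_{s_i}$ is the normal bundle of the $i$-th section. As $\Spec A$ is affine and the images of the sections are disjoint, summing over $i$ gives $H^0(C,\mathcal N\otimes_A M)$, with no higher cohomology and no obstruction contribution. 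Splicing the two deformation theories along the forgetful map $x\mapsto(C,f)$: an infinitesimal automorphism of $x$ is an automorphism $\theta\in\Hom_C(L,\mathcal O_C\otimes_A M)$ of the trivial deformation of $(C,f)$ that additionally fixes every section, which holds precisely when the restriction of $\theta$ to each $s_i$ vanishes after projection to the normal direction; this yields the exact sequence~(i). For deformations, every deformation of $(C,f)$ admits lifts of all sections, so the forgetful map $D_x(M)\to\Ext^1_C(L,\mathcal O_C\otimes_A M)$ is surjective, with kernel $H^0(C,\mathcal N\otimes_A M)$ modulo the identifications coming from automorphisms of the trivial deformation of $(C,f)$, i.e.\ modulo the image of $\Hom_C(L,\mathcal O_C\otimes_A M)$; this gives~(ii). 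Since the sections carry no obstruction, the obstruction to deforming $x$ equals that for $(C,f)$, namely $\Ext^2_C(L,\mathcal O_C\otimes_A M)$, which is~(iii).

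The hard part is not conceptual but a matter of careful bookkeeping. One must make sure that the target is only deformed by base change, so that the relevant complex is $L_{C/X_A}$ rather than $L_{C/X}$ or $L_{C/A}$ and the coefficient module is exactly $\mathcal O_C\otimes_A M$; and one must pin down the two connecting homomorphisms $\Hom_C(L,\mathcal O_C\otimes_A M)\to H^0(C,\mathcal N\otimes_A M)$ and $H^0(C,\mathcal N\otimes_A M)\to D_x(M)$ geometrically (the first restricts a derivation to the sections and projects it onto the normal bundle; the second moves the sections), so that exactness in~(i) and~(ii) can be checked term by term. An alternative packaging would use the cotangent complex of the pair consisting of $C$ together with the divisor of marked points, relative to $X_A$, producing the whole picture in one stroke; but the two-step approach above keeps Illusie's theory and the elementary deformation theory of disjoint points in a smooth family cleanly separated, and it matches the groups named in the statement.
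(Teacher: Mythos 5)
Your proposal is correct and follows essentially the same route as the paper: both decompose the problem into the deformation theory of the pair $\overline x=(C\to\Spec A,f)$ without sections, governed by $\Hom$, $\Ext^1$, $\Ext^2$ of $L=L_{C/X\times_S\Spec A}$ with coefficients in $\mathcal O_C\otimes_A M$, and then account for the sections via the normal sheaf $H^0(C,\mathcal N\otimes M)$, using that they land in the smooth locus and hence are unobstructed. Your write-up simply supplies more of the Illusie-theoretic bookkeeping and the explicit connecting maps that the paper's terse proof leaves implicit.
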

\begin{proof}
If we ignore the sections $(s_i)$ and denote $\overline x = (C/\Spec A,f)$, then the module of infinitesimal automorphisms $\Aut_{\overline x} (M)$, the module of infinitesimal deformations $D_{\overline x}(M)$ and the module of obstructions $\mathcal O_{\overline x}(M)$ are given by $\Hom(L,\mathcal O_C\otimes_A M)$, $\Ext^1(L,\mathcal O_C\otimes_A M)$ and $\Ext^2(L,\mathcal O_C\otimes_A M)$ respectively. Now we take the sections $(s_i)$ into consideration.
The module of infinitesimal automorphisms $\Aut_x(M)$ consists of those elements of $\Aut_{\overline x}(M)$ that fix the sections $(s_i)$, so the statement (i) in the lemma is true. The module of infinitesimal deformations $D_x(M)$ is the extension of the module $D_{\overline x}(M)$ by the module of infinitesimal deformations of the sections $H^0(C, \mathcal N\otimes M)$.
So the statement (ii) is true.
Finally, since the sections $(s_i)$ land in the smooth locus of $C\rightarrow T$, infinitesimal deformations of the sections are unobstructed.
So we have $\mathcal O_x(M) = \mathcal O_{\overline x}(M)$, i.e.\ the statement (iii) is true.
\end{proof}

\begin{proof}[Proof of \cref{thm:algebraicity_of_stack_of_stable_maps}]
We verify Artin's representability criterion for algebraic stacks as stated in \cite{Artin_Versal_1974} Theorem 5.3\footnote{The paper \cite{Artin_Versal_1974} has the assumption that the base scheme $S$ is of finite type over a field or an excellent Dedekind domain. It is generalized to the locally noetherian case in \cite{stacks-project} Tag 07SZ.}.
Lemmas \ref{lem:sheaf_property} and \ref{lem:limit_preserving} show that $\bcMgn(X/S)$ is a limit preserving stack over the category $\Sch_S$. Lemma \ref{lem:Schlessinger} verifies the first part of Condition (1) of Theorem 5.3 in loc.\ cit.\ (see \cite{Artin_Versal_1974} (2.3) Condition (S1')).
Lemma \ref{lem:effectiveness} verifies Condition (2).
Using the properties of cotangent complexes (\cite{Illusie_Complexe_1971}) and by the five lemma, Lemma \ref{lem:deformation_obstruction} verifies Condition (3) and the remaining parts of Condition (1).
Lemma \ref{lem:diagonal} verifies Condition (4).
To conclude, we have proved that the category $\bcMgn(X/S)$ is an algebraic stack locally of finite presentation over $S$.
\end{proof}

\begin{rem}
Theorem \ref{thm:algebraicity_of_stack_of_stable_maps} remains true if we replace $X$ by an algebraic space locally of finite presentation over $S$. The proof carries over using analogous theories for algebraic spaces.
\end{rem}

\begin{rem}
If we assume moreover that the target space $X$ is separated over $S$, then by the valuative criterion of separatedness, the algebraic stack $\bcMgn(X/S)$ is also separated over $S$ (cf.\ \cite{Kontsevich_Enumeration_1995} \S 1.3.1).
\end{rem}

\begin{rem}
In characteristic zero, the stability condition (Definition \ref{def:stable_maps_(Gromov)}(iv)) ensures that a stable map has no infinitesimal automorphisms. So $\bcMgn(X/S)$ is a Deligne-Mumford stack in characteristic zero. In general, one can only ensure that the diagonal morphism of $\bcMgn(X/S)$ is quasi-finite.
\end{rem}

\section{The moduli stack of non-archimedean analytic stable maps}\label{sec:stacks_of_k-analytic_stable_maps}

In this section, we construct the moduli stack of formal stable maps and the moduli stack of non-archimedean analytic stable maps.

Definitions \ref{def:stable_maps_(Gromov)} and \ref{def:stable_curves} can be carried verbatim to the cases of formal schemes and $k$-analytic spaces as follows.

\begin{defin}\label{def:stable_maps_new}
Let $X$, $T$ be formal schemes over $k^\circ$ (resp.\ $k$-analytic spaces). An \emph{\gn formal (resp.\ $k$-analytic) stable map} $\big(C\rightarrow T, (s_i), f\big)$ into $X$ over $T$ consists of a morphism $C\rightarrow T$, a morphism $f\colon C\rightarrow X$ and $n$ morphisms $s_i\colon T\rightarrow C$ such that Conditions (i)-(iv) of Definition \ref{def:stable_maps_(Gromov)} are satisfied.
\end{defin}

\begin{defin}\label{def:stable_curves_new}
Let $T$ be a formal scheme over $k^\circ$ (resp.\ a $k$-analytic space). An \emph{\gn formal (resp.\ $k$-analytic) stable curve} $\big(C\rightarrow T, (s_i)\big)$ over $T$ is an \gn formal (resp.\ $k$-analytic) stable map $\big(C\rightarrow T, (s_i), f\big)$ into $\Spf k^\circ$ (resp.\ $\SpB k$) with $f$ being the structure morphism.
\end{defin}

These definitions are compatible with respect to the analytification functor $(\cdot)\an$, the special fiber functor $(\cdot)_s$ and the generic fiber functor $(\cdot)_\eta$ in the following sense.

\begin{lem}\label{lem:analytification_of_stable_maps}
Let $X$ be an algebraic variety over $k$, $A$ a strictly $k$-affinoid algebra and $\big(C\rightarrow\Spec A, (s_i),f\big)$ an \gn algebraic stable map into $X$ over $\Spec A$. Then the triple $\big(C\an\rightarrow\SpB A, (s\an_i),f\an\big)$ obtained by applying the relative analytification functor $(\cdot)\an$ is an \gn $k$-analytic stable map into $X\an$ over $\SpB A$.
\end{lem}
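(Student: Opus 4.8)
The plan is to verify Conditions (i)--(iv) of Definition~\ref{def:stable_maps_(Gromov)} for the analytified triple $\big(C\an\to\SpB A,(s_i\an),f\an\big)$ one by one, using the standard properties of relative analytification over a $k$-affinoid base. The facts I will rely on are: relative analytification commutes with fiber products over $\Spec A$; for a point $x\in\SpB A$ with kernel prime $\mathfrak p_x\subset A$, the fiber $C\an_x$ of $C\an\to\SpB A$ is the analytification over $\mathcal H(x)$ of the algebraic fiber $C\times_{\Spec A}\Spec\mathcal H(x)$ (compatibility of analytification with base change along $\kappa(\mathfrak p_x)\hookrightarrow\mathcal H(x)$); analytification preserves and reflects properness, flatness, smoothness and relative dimension; it commutes with passage to the reduction and with the normalization of reduced curves; and for a proper scheme over a complete valued field it preserves the cohomology of $\mathcal O$ and the formation of scheme-theoretic images (GAGA). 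All of these are standard (cf.\ \cite{Berkovich_Spectral_1990,Conrad_Relative_2006,Ducros_Families_2011}).

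Granting this, Conditions (i) and (iii) are immediate. For (i), the morphism $C\an\to\SpB A$ is proper and flat because $C\to\Spec A$ is, and its fibers are of dimension $1$, so it is a proper flat family of curves. For (iii), each $s_i\an$ is a section of $C\an\to\SpB A$ because analytification is a functor fixing identities and compositions; the sections have pairwise disjoint images because $S_i\times_C S_j=\varnothing$, so $S_i\an\times_{C\an}S_j\an=\varnothing$; and they land in the smooth locus of $C\an/\SpB A$, which is exactly the analytification of the smooth locus of $C/\Spec A$ (the non-smooth locus being closed, smoothness being preserved and reflected).

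For Condition (ii), fix $x\in\SpB A$ and let $\Omega$ be a completed algebraic closure of $\mathcal H(x)$; the corresponding geometric fiber of $C\an\to\SpB A$ is the base change of $C\an_x$ to $\Omega$, which by base change compatibility is the analytification over $\Omega$ of the algebraic geometric fiber $C_{\bar x}:=C\times_{\Spec A}\Spec\Omega$. By hypothesis $C_{\bar x}$ is reduced, with at worst nodes, of arithmetic genus $g$. Reducedness passes to $C_{\bar x}\an$ since analytification commutes with reduction; the singular locus of $C_{\bar x}\an$ is the analytification of that of $C_{\bar x}$, and at each such point the completed local ring is $\Omega\llb u,v\rrb/(uv)$ on both sides, so $C_{\bar x}\an$ has at worst nodes; and the arithmetic genus is preserved because $C_{\bar x}\an$ is proper over $\Omega$ and GAGA identifies $\mathrm H^\bullet(C_{\bar x}\an,\mathcal O)$ with $\mathrm H^\bullet(C_{\bar x},\mathcal O)$.

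Condition (iv) once more reduces to a geometric fiber over a point $x\in\SpB A$, which is the analytification over $\Omega$ of the algebraic stable map $\big(C_{\bar x}\to X_{\bar x},(s_i(\bar x)),f_{\bar x}\big)$. Passing to normalizations, $\widetilde{C_{\bar x}\an}=\widetilde{C_{\bar x}}\an$ is the disjoint union of the analytifications of the normalizations of the irreducible components of $C_{\bar x}$, each a smooth proper curve of the same genus (GAGA), with the marked points and the preimages of the nodes matching on the two sides. A component of $C_{\bar x}$ is contracted by $f$ if and only if the corresponding component of $C_{\bar x}\an$ is contracted by $f\an$, since analytification is compatible with the scheme-theoretic image of the proper morphism $f_{\bar x}$. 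Hence the number of special points on the normalization of each genus-$0$ or genus-$1$ contracted component is unchanged, and the stability inequalities are inherited, giving Condition (iv). The one point needing care is the base change compatibility of relative analytification along the (in general non-discretely valued, possibly very large) field extensions $\mathcal H(x)/k$ and $\Omega/k$, which is what identifies the analytic fibers with analytifications of algebraic fibers; granting that, the remaining verifications are routine applications of the GAGA dictionary.
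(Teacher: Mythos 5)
Your proof is correct and follows essentially the same route as the paper: the paper dismisses Conditions (i)--(iii) as clear and settles Condition (iv) by observing that a geometric point of $\SpB A$ is in particular a geometric point of $\Spec A$, which is exactly the content of your fiberwise reduction via $\mathcal H(x)\hookrightarrow\Omega$. Your write-up merely supplies the routine GAGA and base-change details that the paper leaves implicit.
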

\begin{proof}
Conditions (i)-(iii) of Definition \ref{def:stable_maps_(Gromov)} are clearly satisfied.
Condition (iv) follows from the fact that a geometric point of the $k$-analytic spectrum $\SpB A$ is in particular a geometric point of $\Spec A$.
\end{proof}

\begin{lem}
Let $\fX,\fT$ be formal schemes locally finitely presented over $k^\circ$ and let $\big(\fC\rightarrow\fT,(\fs_i),\ff\big)$ be an \gn formal stable map into $\fX$ over $\fT$.
Then the triple $\big(\fC_s\rightarrow \fT_s,((\fs_i)_s),\ff_s\big)$ obtained by applying the special fiber functor $(\cdot)_s$ is an \gn algebraic stable map into $\fXs$ over $\fT_s$.
\end{lem}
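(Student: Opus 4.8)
The plan is to argue just as in \cref{lem:analytification_of_stable_maps}, exploiting that the special fiber functor $(\cdot)_s$ is a base change and hence preserves every property appearing in \cref{def:stable_maps_(Gromov)}. Fix a uniformizer $\varpi$ of $k$. For any formal scheme $\fZ$ locally finitely presented over $\kc$, the morphism $\fZ_s\hookrightarrow\fZ$ is the closed immersion cut out by $\varpi$; in particular $\fT_s\hookrightarrow\fT$ is a closed immersion, $\fC_s\simeq\fC\times_\fT\fT_s$, and $(\fs_i)_s\colon\fT_s\to\fC_s$ is a section of $\fC_s\to\fT_s$ by functoriality, while $\ff_s\colon\fC_s\to\fXs$ is a morphism of $\tilde k$-schemes. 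The one preliminary point worth spelling out is that $\fT$ and $\fT_s$ have the same underlying topological space: a point of $\fT$ corresponds to an open prime of a local ring $\cO_{\fT,t}$, which contains a power of the ideal of definition $(\varpi)$ and hence contains $\varpi$. For the same reason, any morphism $\Spec\Omega\to\fT$ with $\Omega$ an algebraically closed field factors through $\fT_s$ (continuity forces $\varpi$ into the kernel, and $\Omega$ is reduced), so the geometric fibers of $\fC\to\fT$ are literally the geometric fibers of $\fC_s\to\fT_s$.

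Granting this, I would verify Conditions (i)--(iv) of \cref{def:stable_maps_(Gromov)} for the triple $\big(\fC_s\to\fT_s,((\fs_i)_s),\ff_s\big)$ one at a time. Condition (i): properness of $\fC\to\fT$ as a morphism of formal schemes (which, by the convention of \cref{sec:definition_of_stacks}, or equivalently by the usual theory of proper morphisms of formal schemes, amounts to properness of $\fC_s\to\fT_s$) handles the proper part; flatness of $\fC_s\to\fT_s$ is stability of flatness under the base change $\fT_s\hookrightarrow\fT$; and the relative-dimension-one condition persists because the geometric fibers do not change. Conditions (ii) and (iv) are conditions on the geometric fibers only---reducedness, at-worst-double-point singularities, arithmetic genus $g$, and the required numbers of special points on the $f$-contracted components of genus $0$ and $1$---hence they hold for $\fC_s\to\fT_s$ since its geometric fibers coincide with those of $\fC\to\fT$. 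Condition (iii): the images of the $(\fs_i)_s$ are the special fibers of the (pairwise disjoint) images of the $\fs_i$, so they remain pairwise disjoint, and they land in the smooth locus of $\fC_s\to\fT_s$ because smoothness is preserved under the base change $\fT_s\hookrightarrow\fT$ and the $\fs_i$ land in the smooth locus of $\fC\to\fT$.

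I do not anticipate a genuine obstacle; this is the easy direction, entirely parallel to \cref{lem:analytification_of_stable_maps}. The only thing that requires a moment of care is the bookkeeping: making precise what ``proper flat family of curves'' and ``geometric fiber'' mean for a morphism of formal schemes, and noting that the formation of geometric fibers factors through $(\cdot)_s$ because $\fT$ is supported on its special fiber. With that settled, all four conditions transfer from the general compatibility of flatness, properness and smoothness with base change.
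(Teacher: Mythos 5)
Your proposal is correct and follows essentially the same route as the paper: conditions (i)--(iii) transfer directly under the base change $\fT_s\hookrightarrow\fT$, and condition (iv) holds because a geometric point of $\fT_s$ is in particular a geometric point of $\fT$, so the relevant geometric fibers are among those of $\fC\to\fT$. The paper states this in two sentences; you merely spell out the same bookkeeping in more detail.
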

\begin{proof}
Conditions (i)-(iii) of Definition \ref{def:stable_maps_(Gromov)} are clearly satisfied.
Condition (iv) follows from the fact that a geometric point of the scheme $\fT_s$ is in particular a geometric point of the formal scheme $\fT$.
\end{proof}

\begin{lem}\label{lem:generic_fiber_of_formal_stable_maps}
Let $\fX,\fT$ be formal schemes locally finitely presented over $k^\circ$ and let $\big(\fC\rightarrow\fT,(\fs_i),\ff\big)$ be an \gn formal stable map into $\fX$ over $\fT$.
Then the triple $\big(\fC_\eta\rightarrow \fT_\eta,((\fs_i)_\eta),\ff_\eta\big)$ obtained by applying the generic fiber functor $(\cdot)_\eta$ is an \gn $k$-analytic stable map into $\fXe$ over $\fT_\eta$.
\end{lem}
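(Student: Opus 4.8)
\medskip
\noindent\textbf{Proof sketch (proposal).}
The plan is to dispatch Conditions (i)--(iii) of Definition \ref{def:stable_maps_(Gromov)} quickly and to concentrate the effort on the stability condition (iv). For (i)--(iii) I would invoke the usual compatibilities of the generic fibre functor: it sends proper (resp.\ flat, smooth) morphisms of formal schemes to proper (resp.\ flat, smooth) morphisms of $k$-analytic spaces (cf.\ \cite{Lutkebohmert_Formal_1990,Ducros_Families_2011}), it preserves relative dimension, and it commutes with the formation of coherent cohomology along a proper flat family; hence $\fC_\eta\to\fT_\eta$ is a proper flat family of curves of relative arithmetic genus $g$, and the sections $(\fs_i)_\eta$ are disjoint and land in the smooth locus of $\fC_\eta\to\fT_\eta$.

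The real point, and the step I expect to be the main obstacle, is the behaviour of the \emph{geometric} fibres of $\fC_\eta\to\fT_\eta$: both the at-worst-nodal clause of (ii) and the stability count (iv). In contrast with the special-fibre functor handled in the preceding lemma, a geometric point of $\fT_\eta$ need not come from a geometric point of $\fT$. To get around this I would reduce to a valuation-ring base: a geometric point $\bar x\colon\Spec\Omega\to\fT_\eta$, with $\Omega$ complete, algebraically closed, carrying a rank $\le 1$ valuation over $k$, factors through an affinoid domain $\SpB(A)\subseteq\fT_\eta$ attached to an affine formal subscheme $\Spf\mathcal A\subseteq\fT$ with $A=\mathcal A\otimes_{k^\circ}k$; the corresponding character $A\to\Omega$ is bounded by the supremum norm, hence carries $\mathcal A$ into $\Omega^\circ$, so $\bar x$ extends to a morphism $\Spf\Omega^\circ\to\fT$. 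Pulling the formal stable map back along it, and using that the generic fibre functor commutes with base change, the geometric fibre of $\fC_\eta\to\fT_\eta$ over $\bar x$ is the generic fibre, over the one-point space $\SpB\Omega$, of an \gn formal stable map $\big(\fC'\to\Spf\Omega^\circ,(\fs'_i),\ff'\big)$; its special fibre over the algebraically closed residue field $\widetilde\Omega$ is an \gn algebraic stable map by the argument of the preceding lemma.

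It then remains to check Conditions (ii) and (iv) for the proper $\Omega$-analytic curve $\fC'_\eta$. The nodal/reduced clause I would read off the étale-local models $\Spf\Omega^\circ\{x\}$, $\Spf\Omega^\circ\{x,y\}/(xy)$, $\Spf\Omega^\circ\{x,y\}/(xy-a)$ of a nodal formal curve, whose generic fibres are a closed disc, two closed discs glued at a point, and an annulus. For (iv) I would use the reduction map $\fC'_\eta\to\fC'_s$: an irreducible component $Z\subseteq\fC'_\eta$ of arithmetic genus $0$ (resp.\ $1$) that $\ff'_\eta$ contracts to a point is carried onto a connected subcurve $W\subseteq\fC'_s$ which, by functoriality of reduction, is contracted by $\ff'_s$ and satisfies $p_a(W)\le p_a(Z)$, the surplus genus of $Z$ coming only from loops of $W$ that get smoothed. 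Applying the stability of $\fC'_s$ componentwise to $W$ and counting half-edges on the dual graph of $W$ --- a tree when $p_a(W)=0$ --- then forces $Z$ to carry at least $3$ (resp.\ at least $1$) special points, which is (iv). The delicate point is precisely this specialization-plus-combinatorics argument; alternatively one could algebraise $\fC'\to\Spf\Omega^\circ$ over $\Spec\Omega^\circ$ and invoke the openness of the stability locus on the base (cf.\ the proof of Lemma \ref{lem:effectiveness}) together with the fact that the generic point of a valuation ring lies in every non-empty open subscheme.
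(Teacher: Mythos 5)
Your proposal follows the paper's own proof essentially step for step: conditions (i)--(iii) are dispatched by standard compatibilities of the generic-fibre functor, and stability is verified by pulling the formal stable map back over $\Spf\Omega^\circ$ attached to a geometric point of $\fT_\eta$ and transferring the stability of the special fibre (an algebraic stable map over $\widetilde\Omega$, by the preceding lemma) to the contracted components of the generic fibre via the reduction map --- exactly the specialization argument the paper phrases as viewing the generic fibre as an ``infinitesimal deformation'' of the special fibre. You even supply explicitly the step the paper leaves implicit, namely that the geometric point $\SpB\Omega\to\fT_\eta$ extends to a morphism $\Spf\Omega^\circ\to\fT$ because the corresponding character is power-bounded.
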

\begin{proof}
Conditions (i)-(iii) of Definition \ref{def:stable_maps_(Gromov)} are clearly satisfied.
Let us verify Condition (iv).
A geometric point of $\fT_\eta$ is a morphism $\SpB k'\rightarrow\fT_\eta$ for some algebraically closed non-archimedean field $k'$.
Denote by $(k')^\circ$ the ring of integers of $k'$ and let $\fT'=\Spf(k')^\circ$.
Let $\big(\fC'\rightarrow \fT',(\fs'_i),\ff'\big)$ be the pullback of the formal stable map $\big(\fC\rightarrow\fT,(\fs_i),\ff\big)$ along the morphism $\fT'\rightarrow\fT$. It suffices to show that the triple $\big(\fC'_\eta\rightarrow \fT'_\eta,((\fs'_i)_\eta),\ff'_\eta\big)$ obtained by applying the generic fiber functor $(\cdot)_\eta$ satisfies the stability condition.

Denote by $\big(\fC'_s\rightarrow \fT'_s,((\fs'_i)_s),\ff'_s\big)$ the \gn algebraic stable map into $\fXs$ over $\Spec\widetilde{k'}$ obtained by applying the special fiber functor $(\cdot)_s$.
Let $\pi_{\fC'}\colon \fC'_\eta\rightarrow\fC'_s$ denote the reduction map.
The triple $\big(\fC'_\eta\rightarrow \fT'_\eta,((\fs'_i)_\eta),\ff'_\eta\big)$ can be seen as an infinitesimal deformation of $\big(\fC'_s\rightarrow \fT'_s,((\fs'_i)_s),\ff'_s\big)$.
Let $C^0$ be an irreducible component of $\fC'_\eta$ that maps to a point by $\ff'_\eta$.
Then $C^0$ is an infinitesimal deformation of $\pi_{\fC'}(C^0)$.
Since $\big(\fC'_s\rightarrow \fT'_s,((\fs'_i)_s),\ff'_s\big)$ is an algebraic stable map, every irreducible component of $\pi_{\fC'}(C^0)$ of genus 0 (resp.\ 1) has at least 3 (resp.\ 1) special points.
We deduce that the irreducible component $C^0$ also has enough special points, completing the proof.
\end{proof}

For a formal scheme $\fX$ over $k^\circ$, we denote by $\bcMgn(\fX)$ the moduli stack of \gn formal stable maps into $\fX$.
For a $k$-analytic space $X$, we denote by $\bcMgn(X)$ the moduli stack of \gn $k$-analytic stable maps into $X$.
In the following, we study the representability of the two stacks.

Let $\varpi$ be a uniformizer of the ground field $k$.
Put $S_m=\Spec (k^\circ/\varpi^{m+1})$.

\begin{prop}\label{prop:moduli_of_formal_stable_maps}
Let $\fX$ be a formal scheme locally finitely presented over $k^\circ$.
Put $X_m=\fX\times_{k^\circ} S_m$.
We have a natural isomorphism of stacks over the category $\FSch_{\kc}$
\[ \varinjlim_m \bcMgn(X_m/S_m) \xrightarrow{\ \sim\ } \bcMgn(\fX). \]
Therefore, the stack $\bcMgn(\fX)$ is a formal stack locally finitely presented over $k^\circ$.
Consequently, we have a natural isomorphism
\[\big(\bcMgn(\fX)\big)_s\xrightarrow{\ \sim\ }\bcMgn(\fXs),\]
where $(\cdot)_s$ denote the special fiber functor.
\end{prop}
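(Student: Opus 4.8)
The plan is to compare both sides on test objects and then to read off the last two assertions as formal consequences.

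First I would record the structure of the tower on the left. By \cref{thm:algebraicity_of_stack_of_stable_maps} each $\bcMgn(X_m/S_m)$ is an algebraic stack locally of finite presentation over $S_m$, and the formation of the moduli stack of stable maps is compatible with base change of the target: a stable map into $X_m=X_{m+1}\times_{S_{m+1}}S_m$ over an $S_m$-scheme $T$ is the same datum as a stable map into $X_{m+1}$ over $T$. Hence $\bcMgn(X_m/S_m)\simeq\bcMgn(X_{m+1}/S_{m+1})\times_{S_{m+1}}S_m$, so the $\bcMgn(X_m/S_m)$ form a compatible tower of algebraic stacks along the closed immersions $S_m\hookrightarrow S_{m+1}$, and $\varinjlim_m\bcMgn(X_m/S_m)$ is a formal stack locally finitely presented over $k^\circ$: its diagonal is the colimit of the representable diagonals of the $\bcMgn(X_m/S_m)$, and a smooth covering by a formal scheme locally finitely presented over $k^\circ$ is obtained by choosing a smooth covering of $\bcMgn(X_0/S_0)$ by a scheme locally of finite type over $\tilde k$ and lifting it compatibly along the infinitesimal thickenings $S_0\hookrightarrow S_m$, which is possible because smooth affine morphisms deform.

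The core step is the displayed equivalence, which I would verify on $\fT$-points for an arbitrary formal scheme $\fT$ locally finitely presented over $k^\circ$. Writing $\fT$ as the colimit of its truncations $\fT_m:=\fT\times_{k^\circ}S_m$, the left-hand side evaluates to $\varprojlim_m\bcMgn(X_m/S_m)(\fT_m)$, the groupoid of compatible systems of algebraic stable maps into $X_m$ over $\fT_m$, whereas the right-hand side is the groupoid of formal stable maps into $\fX$ over $\fT$. Restricting a formal stable map $(\fC\to\fT,(\fs_i),\ff)$ along $\fT_m\hookrightarrow\fT$ produces such a compatible system: Conditions (i)--(iii) of \cref{def:stable_maps_(Gromov)} are immediate, and Condition (iv) transfers since a geometric point of $\fT_m$ is in particular a geometric point of $\fT$, exactly as in the proof of \cref{lem:generic_fiber_of_formal_stable_maps}. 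Conversely, from a compatible system $(\fC_m\to\fT_m,(\fs_{i,m}),\ff_m)$ one forms the $\varpi$-adic formal scheme $\fC=\varinjlim_m\fC_m$, which is proper and flat over $\fT$ because properness and flatness of formal schemes are detected on the tower of truncations; the sections $\fs_{i,m}$ and the maps $\ff_m$ glue to $\fs_i\colon\fT\to\fC$ and $\ff\colon\fC\to\fX$ (using $\fX=\varinjlim_mX_m$); and the remaining clauses of \cref{def:stable_maps_new}, being conditions on the geometric fibers of $\fC\to\fT$, hold because those fibers coincide with the geometric fibers of $\fC_0\to\fT_0$, for which $(\fC_0\to\fT_0,(\fs_{i,0}),\ff_0)$ is an algebraic stable map into $\fX_s=X_0$ by hypothesis. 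These two assignments are mutually inverse and functorial in $\fT$, which gives the isomorphism of stacks over $\FSch_{\kc}$ (and in particular re-proves that $\bcMgn(\fX)$ satisfies descent).

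The last two assertions are then formal: $\bcMgn(\fX)$ is a formal stack locally finitely presented over $k^\circ$ because the right-hand side is one, by the first paragraph; and applying the special fiber functor of \cref{rem:left_Kan_extension} to $\bcMgn(\fX)\simeq\varinjlim_m\bcMgn(X_m/S_m)$ gives $\big(\bcMgn(\fX)\big)_s\simeq\bcMgn(X_0/S_0)=\bcMgn(\fX_s)$, since the special fiber of the colimit is its bottom term, $S_0=\Spec\tilde k$ and $X_0=\fX\times_{k^\circ}S_0=\fX_s$. The step I expect to require the most care is the gluing direction of the $\fT$-point computation: one must be precise about the meaning of ``proper flat family of curves'' and of ``geometric fiber'' for a morphism of formal schemes, and check that each of Conditions (i)--(iv) in \cref{def:stable_maps_new} is equivalent to the corresponding condition for the algebraic truncations over the $\fT_m$ --- already over $\fT_0$ in the case of (ii)--(iv). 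The fact that a proper flat formal $\fT$-scheme is the same datum as a compatible system of proper flat schemes over the $\fT_m$ is classical; the content lies in matching up the nodality, genus, section, and stability conditions.
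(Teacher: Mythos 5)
Your proposal follows the same route as the paper: it identifies $\fT$-points of $\varinjlim_m\bcMgn(X_m/S_m)$ with compatible sequences of algebraic stable maps over the truncations $T_m=\fT\times_{k^\circ}S_m$, matches these with formal stable maps over $\fT$, and invokes \cref{thm:algebraicity_of_stack_of_stable_maps} for the algebraicity of each $\bcMgn(X_m/S_m)$. The extra details you supply (lifting a smooth affine atlas along the nilpotent thickenings, detecting properness/flatness on truncations, and reading off the special fiber as the bottom term of the tower) are correct elaborations of steps the paper's proof leaves implicit.
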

\begin{proof}
By \cref{thm:algebraicity_of_stack_of_stable_maps}, the stack $\bcMgn(X_m/S_m)$ is an algebraic stack locally finitely presented over $S_m$.
For a formal scheme $\fT\in\FSch_{\kc}$, we denote $T_m = \fT\times_{k^\circ} S_m$.
A morphism $\fT\rightarrow\varinjlim_m \bcMgn(X_m/S_m)$ corresponds to a compatible sequence of morphisms $t_m\colon T_m\rightarrow \bcMgn(X_m/S_m)$ in the sense that $t_m = t_{m+1}\times_{S_{m+1}} S_m$.
The latter corresponds to a compatible sequence of \gn algebraic stable maps into $X_m$ over $T_m$, and thus an \gn formal stable map into $\fX$ over $\fT$.
So we deduce the natural isomorphism $\varinjlim_m \bcMgn(X_m/S_m) \xrightarrow{\sim} \bcMgn(\fX)$.
\end{proof}

When a $k$-analytic space is the analytification of a proper algebraic variety over $k$, the representability of the moduli stack of \kanal stable maps can be established as the analytification of the moduli stack of algebraic stable maps using the non-archimedean analytic GAGA theorem.

\begin{thm}\label{thm:moduli_of_analytic_stable_maps_GAGA}
Let $X$ be a proper algebraic variety over $k$. We have a natural isomorphism of stacks over the category $\An_k$
\[ \big(\bcMgn(X)\big)\an\xrightarrow{\ \sim\ }\bcMgn(X\an), \]
where $(\cdot)\an$ denotes the analytification functor.
Therefore, the stack $\bcMgn(X\an)$ is a $k$-analytic stack.
\end{thm}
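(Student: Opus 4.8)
The plan is to build a comparison morphism from relative analytification and to prove it is an equivalence by invoking the non-archimedean GAGA theorem. By \cref{thm:algebraicity_of_stack_of_stable_maps} applied with $S=\Spec k$, the category $\bcMgn(X)$ is an algebraic stack locally of finite presentation over $k$ with representable diagonal, so $\big(\bcMgn(X)\big)\an$ is a well-defined $k$-analytic stack in the sense of \cref{rem:left_Kan_extension}. The first step is to produce a morphism of $k$-analytic stacks $\Phi\colon\big(\bcMgn(X)\big)\an\to\bcMgn(X\an)$. For a strictly $k$-affinoid algebra $A$ and an algebraic stable map $\big(C\to\Spec A,(s_i),f\big)$ into $X$ over $\Spec A$, \cref{lem:analytification_of_stable_maps} shows that its relative analytification is a $k$-analytic stable map into $X\an$ over $\SpB A$; this assignment is functorial in $A$ and compatible with base change, and by unwinding the left Kan extension that defines $\big(\bcMgn(X)\big)\an$ (choose a smooth presentation of $\bcMgn(X)$ by schemes locally of finite type over $k$ and analytify it) these assignments glue to the desired $\Phi$.

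The core of the argument is that $\Phi$ is an equivalence, and it suffices to check this after testing on $k$-affinoid objects $\SpB A$, where it reduces to two points: (a) \emph{essential surjectivity} — every $k$-analytic stable map $\big(C\to\SpB A,(s_i),f\big)$ into $X\an$ is, quasi-étale locally on $\SpB A$, the analytification of an algebraic one; and (b) \emph{full faithfulness} — morphisms between algebraic stable maps over $\Spec A$ coincide with morphisms between their analytifications. For (a), the stability condition makes $\omega_{C/\SpB A}\big(\sum_i S_i\big)$ relatively ample, so $C$ is projective over the affinoid (hence noetherian affine) base, and relative non-archimedean GAGA yields a projective scheme $\mathcal C$ over $\Spec A$ with $\mathcal C\an\simeq C$; the sections $S_i$ and the graph of $f$ inside $C\times_k X\an$ — which is proper over $\SpB A$ since $X$ is proper and $C\to\SpB A$ is proper, and projective since it is isomorphic to $C$ — algebraize likewise, giving an algebraic pre-stable map $\big(\mathcal C\to\Spec A,(\sigma_i),f\loweralg\big)$ into $X$ over $\Spec A$ whose analytification recovers the given one. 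Conditions (i)--(iii) of \cref{def:stable_maps_(Gromov)} for this algebraic datum follow from GAGA (faithful flatness of analytification, agreement of coherent cohomology for the genus, GAGA-invariance of the smooth locus for the sections), and the stability condition (iv) holds because it is an open condition on $\Spec A$ (as used in the proof of \cref{lem:effectiveness}) whose complement, being a closed subset of the Jacobson scheme $\Spec A$, would have to contain a closed point; but every closed point of $\Spec A$ underlies a point of $\SpB A$, over which the analytic fibre — the analytification of the corresponding algebraic fibre — is stable, a contradiction. For (b), an isomorphism between the analytifications of two projective families over $\Spec A$ is itself a closed analytic subspace of a relative projective space and algebraizes by GAGA, so the $\Isom$ presheaves agree; together (a) and (b) show $\Phi$ is an equivalence of stacks over $\An_k$, and the last assertion follows because $\big(\bcMgn(X)\big)\an$ is a $k$-analytic stack.

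The main obstacle splits into two kinds of bookkeeping. On the formal side, one must make precise that the $\SpB A$-points of the left Kan extension $\big(\bcMgn(X)\big)\an$ are genuinely computed by algebraic stable maps over $\Spec A$ modulo quasi-étale descent, which uses that $\bcMgn(X)$ is a stack for a fine enough topology together with the Jacobson property of affinoid spectra. On the GAGA side, the delicate step is algebraizing the morphism $f$ into $X\an$ (not merely the source curve) and then checking that the resulting algebraic family still satisfies the \emph{geometric}-fibre stability condition; this is where the passage between geometric points of $\Spec A$ and of $\SpB A$ via the openness of stability is essential. Everything else — flatness, arithmetic genus, and the behaviour of the sections — is routine GAGA.
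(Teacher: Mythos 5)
There is a genuine gap at the heart of your essential-surjectivity step. You assert that ``the stability condition makes $\omega_{C/\SpB A}\big(\sum_i S_i\big)$ relatively ample, so $C$ is projective over the affinoid base,'' and you build the whole algebraization (of $C$, then of the graph of $f$) on this. For stable \emph{maps}, as opposed to stable curves, this is false: Condition (iv) of \cref{def:stable_maps_(Gromov)} only constrains the components that are \emph{contracted} by $f$, so a fiber may well contain a rational component with one or two special points that maps non-constantly to $X\an$, and on such a component $\omega_{C/T}\big(\sum S_i\big)$ has degree $\le 0$. The correct statement, which is the one used in \cref{rem:stable_maps_as_sheaves} and in the paper's proof, is that $\omega_{C/T}\big(\sum S_i\big)$ is ample relative to $X\an_T$, i.e.\ relative to the morphism $f_T\colon C\to X\an_T$, not relative to $T$. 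Without a $T$-ample line bundle you cannot algebraize the abstract proper family $C\to\SpB A$ by relative GAGA (GAGA algebraizes coherent sheaves on the analytification of a given proper $\Spec A$-scheme; turning the analytic space $C$ itself into a scheme needs a relatively ample bundle or some substitute), and since $X$ is only assumed proper, not projective, you cannot repair the ampleness by twisting with the pullback of an ample class on $X$. Your subsequent claim that the graph of $f$ in $C\times_k X\an$ is ``projective since it is isomorphic to $C$'' inherits the same defect.

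The paper's proof avoids exactly this trap: it never algebraizes $C$ directly over $\Spec A$. Instead it forms the quasi-coherent graded algebra $\mathcal L=\bigoplus_{m\ge 0} f_{T*}\big(\omega_{C/T}(\sum S_i)\big)^{\otimes m}$ on $X\an_T$, algebraizes $\mathcal L$ by relative non-archimedean GAGA for the proper morphism $X_T\to\Spec A$ (Köpf, Conrad), and then recovers the curve, the morphism to $X$, and the sections all at once as $\Proj_{X_T}(\mathcal L\alg)$ — this is precisely how the $f_T$-relative (rather than $T$-relative) ampleness is exploited, exactly as in \cref{rem:stable_maps_as_sheaves}. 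The remaining ingredients of your proposal — the construction of the comparison morphism via \cref{lem:analytification_of_stable_maps}, fullness via GAGA for isomorphisms, and the verification of stability of the algebraized family on closed points of the Jacobson scheme $\Spec A$ — are consistent with the paper's argument (and note that no quasi-étale localization of $T$ is needed in this theorem), but as written the key algebraization step does not go through and must be replaced by the pushforward-and-$\Proj$ argument over $X_T$.
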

\begin{proof}
Let $T=\SpB A$ for any strictly $k$-affinoid algebra $A$.
A morphism $T\to\big(\bcMgn(X)\big)\an$ gives rise to an \gn algebraic stable map into $X$ over $\Spec A$.
By Lemma \ref{lem:analytification_of_stable_maps}, the analytification of this algebraic stable map gives a $k$-analytic stable map into $X\an$ over $T$.
So we obtain a morphism $T\to\bcMgn(X\an)$.
Since the construction is functorial on $T$, we obtain a natural morphism $\big(\bcMgn(X)\big)\an\xrightarrow{\ \sim\ }\bcMgn(X\an)$.

Now we show that the functor
\begin{equation}\label{eq:moduli_of_analytic_stable_maps_GAGA}
\big(\bcMgn(X)\big)\an(T) \xrightarrow{\ \sim\ } \bcMgn(X\an)(T)
\end{equation}
is an equivalence of groupoids.
It is faithful by construction.
Let us prove that it is essentially surjective.
Let $\big(C\rightarrow T, (s_i),f\big)$ be an \gn $k$-analytic stable map into $X\an$ over $T$. Denote by $S_i$ the image of the section $s_i$.
Put $X\an_T=X\an\times_k T$, $f_T\colon C\rightarrow X\an_T$.
The stability condition implies that the line bundle $\omega_{C/T}(\sum S_i)$ is relatively ample over $X\an_T$.
Let $\mathcal L$ denote the sheaf of graded algebras
\[ \mathcal L = \bigoplus_{m\geq 0} f_{T*} \Big(\omega_{C/T} \big(\sum S_i\big)\Big)^{\otimes m} \]
over $X\an_T$.
It is a direct sum of $k$-analytic coherent sheaves.
By the relative $k$-analytic GAGA theorem (cf.\ \cite{Conrad_Relative_2006} Example 3.2.6, \cite{Kopf_Eigentliche_1974} \S 5-\S 6), 
$\mathcal L$ is isomorphic to the analytification of an algebraic quasi-coherent sheaf of graded algebras $\mathcal L\alg$ over $X_T$. Using the relative $\Proj$ construction, the sheaf $\mathcal L\alg$ gives us a family of algebraic curves $C\alg=\Proj_{X_T} (\mathcal L\alg)$ over $\Spec A$ and a morphism from $C\alg$ to $X$.
The algebraization of the sections $(s_i)$ follows from the same GAGA theorem.
So we obtain an \gn algebraic stable map into $X$ over $\Spec A$, whose analytification is the $k$-analytic stable map we started with.
Therefore, we have proved that the functor \eqref{eq:moduli_of_analytic_stable_maps_GAGA} is essentially surjective.
Similarly, the fullness of the functor \eqref{eq:moduli_of_analytic_stable_maps_GAGA} follows from the GAGA theorem as well.
We conclude that the functor \eqref{eq:moduli_of_analytic_stable_maps_GAGA} is an equivalence of groupoids.
\end{proof}

\begin{cor}\label{cor:moduli_of_stable_curves}
The moduli stack of \kanal stable curves is the analytification of the moduli stack of algebraic stable curves over $k$.
\end{cor}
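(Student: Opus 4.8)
The plan is to obtain this statement as the special case of Theorem~\ref{thm:moduli_of_analytic_stable_maps_GAGA} in which the target space is a single point. First I would unwind the definitions. By Definition~\ref{def:stable_curves}, an \gn algebraic stable curve over an $S$-scheme $T$ is exactly an \gn algebraic stable map into $S$ over $T$ whose structure morphism $f\colon C\to S$ plays the role of the map to the target; the stability condition~(iv) then reads that every genus $0$ (resp.\ genus $1$) component has at least $3$ (resp.\ $1$) special points, which is the usual Deligne--Mumford--Knudsen stability. Taking $S=\Spec k$, this identifies the moduli stack of \gn algebraic stable curves over $k$ with $\bcMgn(\Spec k)$, the moduli stack of stable maps into $\Spec k$. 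Dually, by Definition~\ref{def:stable_curves_new}, the moduli stack of \gn \kanal stable curves is $\bcMgn(\SpB k)$, the moduli stack of \kanal stable maps into $\SpB k$.

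Next I would observe that $\Spec k$ is a proper algebraic variety over $k$ (a single point) and that its analytification is $(\Spec k)\an=\SpB k$. Applying Theorem~\ref{thm:moduli_of_analytic_stable_maps_GAGA} with $X=\Spec k$ then produces a natural isomorphism of stacks over $\An_k$
\[
\big(\bcMgn(\Spec k)\big)\an\xrightarrow{\ \sim\ }\bcMgn\big((\Spec k)\an\big)=\bcMgn(\SpB k),
\]
which is precisely the asserted identification of the moduli stack of \kanal stable curves with the analytification of the moduli stack of algebraic stable curves over $k$. In particular, the moduli stack of \kanal stable curves is a $k$-analytic stack, as in \cref{thm:moduli_of_analytic_stable_maps_GAGA}.

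I do not expect a genuine obstacle here: all the substantive work has already been done in the proof of Theorem~\ref{thm:moduli_of_analytic_stable_maps_GAGA}, where the relative non-archimedean GAGA theorem is used to algebraize the sheaf of graded algebras $\bigoplus_{m\geq 0} f_{T*}(\omega_{C/T}(\sum S_i))^{\otimes m}$ together with the marked sections. The only point worth recording is that this argument uses nothing about the target beyond its properness, so it applies verbatim when the target is $\Spec k$. One should also keep in mind the usual convention that when $2g-2+n\leq 0$ the stability condition forces both sides to be empty, so the identification holds trivially in that range as well.
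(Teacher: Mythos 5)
Your proposal is correct and is exactly the argument the paper intends: the corollary is stated as an immediate consequence of Theorem~\ref{thm:moduli_of_analytic_stable_maps_GAGA}, obtained by taking the target to be the point $\Spec k$ (which is proper, with $(\Spec k)\an=\SpB k$) and identifying stable curves with stable maps into the point via Definitions~\ref{def:stable_curves} and~\ref{def:stable_curves_new}. Nothing further is needed.
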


In order to establish the representability of the moduli stack of non-archimedean analytic stable maps in general, we need to resort to the moduli stack of formal stable maps, whose representability has been established in Proposition \ref{prop:moduli_of_formal_stable_maps}.

\begin{thm}\label{thm:moduli_of_analytic_stable_maps}
Let $\fX$ be a formal scheme locally of finite presentation over $k^\circ$.
We have a natural isomorphism of stacks over the category $\An_k$
\[ \big(\bcMgn(\fX)\big)_\eta \xrightarrow{\ \sim\ } \bcMgn(\fXe), \]
where $(\cdot)_\eta$ denotes the generic fiber functor. Therefore, the stack $\bcMgn(\fXe)$ is a $k$-analytic stack.
\end{thm}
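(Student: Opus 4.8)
The plan is to construct the natural comparison morphism $\Phi\colon \big(\bcMgn(\fX)\big)_\eta\to\bcMgn(\fXe)$ of stacks over $\An_k$ and to prove it is an equivalence; the assertion that $\bcMgn(\fXe)$ is a $k$-analytic stack then follows by transporting structure along $\Phi$. As a preliminary I would record that $\bcMgn(\fXe)$ is a stack in groupoids over $\An_k$ for the quasi-étale topology: encoding a $k$-analytic stable map in coherent-sheaf data via a relative $\Proj$ construction exactly as in \cref{rem:stable_maps_as_sheaves}, effectivity of quasi-étale descent follows from non-archimedean descent for coherent sheaves (cf.\ \cite{Conrad_Descent_2010}), giving the $k$-analytic analogue of \cref{lem:sheaf_property}.

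To build $\Phi$, recall that by \cref{prop:moduli_of_formal_stable_maps} the stack $\bcMgn(\fX)$ is a formal stack locally finitely presented over $k^\circ$, with special fiber $\bcMgn(\fXs)$, and that by \cref{rem:left_Kan_extension} the generic fiber functor on formal stacks is obtained by left Kan extension from the generic fiber functor on formal schemes. Now \cref{lem:generic_fiber_of_formal_stable_maps} says precisely that $(\cdot)_\eta$ sends a formal stable map into $\fX$ over a formal scheme $\fT$ to a $k$-analytic stable map into $\fXe$ over $\fT_\eta$, functorially in $\fT$; by the universal property defining the generic fiber functor this induces the morphism $\Phi$. Concretely, a $T$-point of $\big(\bcMgn(\fX)\big)_\eta$ for $T$ a strictly $k$-affinoid space is, quasi-étale locally on $T$, a formal stable map into $\fX$ over a formal model of $T$, and $\Phi$ records its generic fiber, the local choices glueing because $\bcMgn(\fXe)$ is a quasi-étale stack.

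Essential surjectivity of $\Phi$ is exactly where \cref{thm:formal_model_stable_maps} enters: both sides being quasi-étale stacks, it suffices to check that every $k$-analytic stable map into $\fXe$ over a strictly $k$-affinoid $T$ lies, quasi-étale locally on $T$, in the image of $\Phi$, and \cref{thm:formal_model_stable_maps} produces, after a quasi-étale covering $T'\to T$, a formal model $\fT$ of $T'$ together with a formal stable map into $\fX$ over $\fT$ whose generic fiber is the given one. For full faithfulness I would compare $\underline{\Isom}$-sheaves: for two formal stable maps $\xi_1,\xi_2$ into $\fX$ over a formal model $\fT$, the sheaf $\underline{\Isom}_{\bcMgn(\fX)}(\xi_1,\xi_2)$ is representable by a formal scheme of finite presentation over $\fT$ (with unramified, in fact quasi-finite, structure morphism, by the deformation theory and the local quasi-separation underlying \cref{lem:diagonal}, applied level by level over the thickenings $\Spf(k^\circ/\varpi^{m+1})$), and the base-change compatibility of $(\cdot)_\eta$ together with representability of $\Isom$ shows that its generic fiber represents $\underline{\Isom}_{\bcMgn(\fXe)}\big((\xi_1)_\eta,(\xi_2)_\eta\big)$. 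Since $(\cdot)_\eta$ commutes with the fibre products defining these $\Isom$-functors, $\Phi$ induces isomorphisms on all $\underline{\Isom}$-sheaves; together with essential surjectivity this makes $\Phi$ an equivalence of $\An_k$-stacks.

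Finally, I would deduce the representability statement. Via $\Phi$ the diagonal of $\bcMgn(\fXe)$ is the generic fiber of the diagonal of $\bcMgn(\fX)$, which is representable by formal schemes; since $(\cdot)_\eta$ carries representable morphisms of formal stacks to morphisms representable by $k$-analytic spaces and commutes with the relevant base changes, the diagonal of $\bcMgn(\fXe)$ is representable. Choosing a smooth effective epimorphism $\fU\to\bcMgn(\fX)$ from a formal scheme $\fU$ locally finitely presented over $k^\circ$, its generic fiber $\fU_\eta\to\bcMgn(\fXe)$ is a morphism from a paracompact strictly $k$-analytic space that is quasi-smooth (smoothness of formal schemes passing to quasi-smoothness of generic fibers, cf.\ \cite{Ducros_Families_2011}) and an effective epimorphism (surjectivity being detected on special fibers, where $\fU_s\to\bcMgn(\fXs)$ is surjective by \cref{prop:moduli_of_formal_stable_maps}). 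Hence $\bcMgn(\fXe)$ admits a quasi-smooth effective epimorphism from a $k$-analytic space and is a $k$-analytic stack. The deep ingredient is \cref{thm:formal_model_stable_maps}; within this proof the subtle point is the full faithfulness, namely verifying that isomorphisms of $k$-analytic stable maps spread out quasi-étale locally to isomorphisms of formal stable maps so that the generic fiber functor commutes with the formation of $\underline{\Isom}$-sheaves — the remainder is bookkeeping about the generic fiber functor for formal stacks.
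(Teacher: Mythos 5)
Your overall skeleton matches the paper's (construct the comparison via \cref{lem:generic_fiber_of_formal_stable_maps}, get essential surjectivity quasi-étale locally from \cref{thm:formal_model_stable_maps}, then descend), but the full faithfulness step — which is the heart of the paper's proof — is asserted rather than proved, and the assertion as stated does not follow from the reasons you give. The claim that the generic fiber of the formal scheme $\underline{\Isom}_{\bcMgn(\fX)}(\xi_1,\xi_2)$ represents $\underline{\Isom}_{\bcMgn(\fXe)}\big((\xi_1)_\eta,(\xi_2)_\eta\big)$ is not a formal consequence of ``base-change compatibility of $(\cdot)_\eta$'' or of $(\cdot)_\eta$ commuting with fibre products: surjectivity of the natural map on $S$-points, for $S$ affinoid, is precisely the statement that any isomorphism of the two $k$-analytic stable maps over $S$ extends, after an admissible modification of a formal model of $S$, to an isomorphism of the formal stable maps. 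The generic fiber functor is not full over a fixed model, and since the target $\fX$ is only a formal scheme one cannot invoke an algebraic Isom scheme of stable maps to spread the isomorphism out. So your argument is circular at exactly the point you yourself flag as ``the subtle point'': you have restated the fullness to be proven, not established it.

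The paper's proof of fullness is a genuine extension argument: kill torsion and pass to admissible blow-ups so the two formal models of $T$ agree; algebraize the pointed formal pre-stable curves by formal GAGA (Zariski-locally on the base); add auxiliary sections after an étale covering to rigidify them into pointed \emph{stable} curves; form the algebraic $\Isom$ scheme of these pointed stable curves over $\fT_{12}\alg$; take the closure of the section defined by the generic-fiber isomorphism and prove it is proper over $\fT_{12}\alg$ by the valuative criterion, using the separatedness of $\bcMgnprime$; complete along the special fiber to extend the isomorphism of curves over a new formal model; and finally extend the compatibility with the maps $\ff_1,\ff_2$ to $\fX$ by continuity. None of this machinery appears in your proposal, and without it both the full faithfulness and the descent step in your essential surjectivity argument (gluing the local formal models produced by \cref{thm:formal_model_stable_maps} back down to $T$) are unsupported. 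As a minor point, your claim that the formal $\Isom$ has unramified structure morphism is not needed and is doubtful in positive residue characteristic, where the diagonal of $\bcMgn$ is only quasi-finite; likewise, surjectivity of the atlas $\fU_\eta\to\bcMgn(\fXe)$ is not ``detected on special fibers'' but comes from the general properties of the generic fiber functor for formal stacks.
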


\begin{cor}\label{cor:moduli_of_analytic_stable_maps}
Let $X$ be a paracompact strictly $k$-analytic space.
Then the stack $\bcMgn(X)$ of \gn $k$-analytic stable maps into $X$ is a $k$-analytic stack.
\end{cor}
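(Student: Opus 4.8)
The plan is to reduce the statement to the case of a \emph{compact} target, which is settled by \cref{thm:moduli_of_analytic_stable_maps} together with the existence of formal models of compact strictly $k$-analytic spaces (Raynaud), and then to glue. The geometric fact driving the reduction is that a $k$-analytic stable map over an affinoid base has relatively compact image: if $\big(C\to T,(s_i),f\big)$ is an \gn stable map into $X$ over a $k$-affinoid space $T$, then $C\to T$ is proper and $T$ is compact, so $C$ is compact and $f(C)$ is a compact subset of $X$.

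First I would establish that "the image is contained in a prescribed open" is an open condition. For an open immersion $V\hookrightarrow X$, let $\bcMgn(X)_V\subset\bcMgn(X)$ be the subcategory of those $\big(C\to T,(s_i),f\big)$ with $f(C)\subset V$. For any object over a test space $T$, the locus $\{t\in T:f(C_t)\subset V\}$ is the complement of the image of the closed subset $C\setminus f^{-1}(V)$ under the closed map $C\to T$, hence is open in $T$; moreover a morphism $C\to X$ with set-theoretic image in $V$ factors uniquely through $V$. Therefore $\bcMgn(X)_V=\bcMgn(V)$ and $\bcMgn(V)\hookrightarrow\bcMgn(X)$ is representable by open immersions. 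In particular, if $V\subset W\subset X$ with $V$ open in $W$ and $W$ compact, then $\bcMgn(V)$ is an open substack of $\bcMgn(W)$; choosing a formal model $\mathfrak W$ of $W$ finitely presented over $k^\circ$, \cref{thm:moduli_of_analytic_stable_maps} gives $\bcMgn(W)\simeq\big(\bcMgn(\mathfrak W)\big)_\eta$, a $k$-analytic stack, and an open substack of a $k$-analytic stack is again a $k$-analytic stack (restrict the atlas and the diagonal). Hence $\bcMgn(V)$ is a $k$-analytic stack.

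Next I would use paracompactness. Choose a locally finite covering $\{X_\alpha\}$ of $X$ by affinoid domains; for each finite subset $A$ of the index set, $X_A\coloneqq\bigcup_{\alpha\in A}X_\alpha$ is a compact strictly $k$-analytic space and $V_A\coloneqq\Int(X_A/X)$ is open in $X$, with $\bigcup_A V_A=X$ by local finiteness. By the previous paragraph each $\bcMgn(V_A)$ is a $k$-analytic stack, and by the compact-image observation these open substacks cover $\bcMgn(X)$: any stable map over an affinoid base has compact image and hence factors through some $V_A$. Now glue. That $\bcMgn(X)$ is a stack in groupoids over $\An_k$ for the quasi-étale topology follows from the argument of \cref{lem:sheaf_property}, using descent for coherent sheaves and morphisms in non-archimedean geometry (\cite{Conrad_Descent_for_coherent_2003,Conrad_Descent_2010}). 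Picking a quasi-smooth effective epimorphism $U_A\to\bcMgn(V_A)$ from a $k$-analytic space for each $A$, the disjoint union $\coprod_A U_A\to\bcMgn(X)$ is quasi-smooth and representable (each $U_A\to\bcMgn(X)$ factors through the open immersion $\bcMgn(V_A)\hookrightarrow\bcMgn(X)$) and is an effective epimorphism because the $\bcMgn(V_A)$ cover. For the diagonal $\bcMgn(X)\to\bcMgn(X)\times_k\bcMgn(X)$: given two stable maps over a test space $T$, cover $T$ by affinoids; over each affinoid both maps factor through a common $V_A$, so their $\Isom$ sheaf is representable by a $k$-analytic space there (the diagonal of the $k$-analytic stack $\bcMgn(V_A)$ being representable), and these glue over $T$. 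This verifies the axioms of \cref{def:k-analytic_stack}.

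The main obstacle is not a hard computation but the correct organization of the reduction: one must verify that "the curve maps into a given open subset of $X$" is an open condition on the base, so that the pieces $\bcMgn(V_A)$ are genuine open substacks and not merely a set-theoretic stratification, and that these pieces really form an open covering of $\bcMgn(X)$ — which is where the relative compactness of the image of a proper curve over an affinoid base is essential. Once this is in place, the $k$-analytic stack structure is transported from \cref{thm:moduli_of_analytic_stable_maps} and the existence of formal models essentially formally.
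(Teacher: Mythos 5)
Your argument is essentially correct, but it is a genuinely different route from the paper's. The paper's proof is two lines: by the Raynaud--Bosch--Lütkebohmert theory of formal models, a \emph{paracompact} strictly $k$-analytic space $X$ already admits a formal model $\fX$ \emph{locally} of finite presentation over $k^\circ$, and \cref{thm:moduli_of_analytic_stable_maps} is stated precisely in that locally-finitely-presented generality, so it applies directly with no reduction to the compact case and no gluing. You instead use formal models only for compact targets and recover the general case by hand: you show that ``$f(C)\subset V$'' is an open condition on the base (via topological properness of $C\to T$), that $\bcMgn(V)\hookrightarrow\bcMgn(X)$ is representable by open immersions, that the opens $V_A=\Int(X_A/X)$ attached to finite unions of a locally finite affinoid cover give open substacks $\bcMgn(V_A)$ covering $\bcMgn(X)$ (here the compactness of $f(C)$ over an affinoid base and the monotonicity $V_A\cup V_{A'}\subset V_{A\cup A'}$ are the right points), and then you glue atlases and check the diagonal and descent. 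This buys independence from the paracompact-case formal-model existence theorem at the cost of several stack-theoretic verifications the paper never needs to spell out (open substacks of $k$-analytic stacks, effectivity of the atlas, gluing of Isom spaces, and the quasi-étale descent property, which you only sketch by analogy with \cref{lem:sheaf_property} and Conrad's descent results); conversely, the paper's route leans on the cited formal-model theory for paracompact spaces and on the fact that \cref{thm:moduli_of_analytic_stable_maps} was proved for formal schemes that are only locally finitely presented. Both are valid; yours is longer but more self-contained on the rigid-geometry input, and your openness lemma is a useful observation in its own right.
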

\begin{proof}
By the theory of formal models (cf.\ \cite{Raynaud_Geometrie_analytique_rigide_1974,Bosch_Formal_I}), there exists a formal scheme $\fX$ locally of finite presentation over $\kc$ whose generic fiber $\fXe$ is isomorphic to the $k$-analytic space $X$.
Therefore, the corollary follows from \cref{thm:moduli_of_analytic_stable_maps}.
\end{proof}

For the proof of \cref{thm:moduli_of_analytic_stable_maps}, we need to establish the existence of formal models for $k$-analytic stable maps.

\begin{thm}\label{thm:formal_model_stable_maps}
Let $\fX$ be a formal scheme locally of finite presentation over $k^\circ$.
Let $T$ be a strictly $k$-affinoid space and let $\big(C\rightarrow T,(s_i),f\big)$ be an \gn \kanal stable map into $\fXe$ over $T$.
Up to passing to a quasi-étale covering of $T$, there exists a formal model $\fT$ of $T$ and an \gn formal stable map into $\fX$ over $\fT$ that gives back the $k$-analytic stable map when we apply the generic fiber functor.
\end{thm}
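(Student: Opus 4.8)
The plan is to first spread the $k$-analytic stable map out to a formal family over \emph{some} formal model of $T$ by the standard Raynaud machinery, then to \emph{repair} that family so that its special fibre becomes nodal — using the semistable reduction techniques for families of curves in the spirit of de Jong \cite{de_Jong_Smoothness_1996,de_Jong_Families_1997} — and finally to \emph{stabilise} the resulting formal pre-stable map. At the very start I would reduce to the case where $\fX$ is quasi-compact: since $T$ is affinoid, $C$ is compact, so $f(C)$ lies in $\fW_\eta$ for some quasi-compact open formal subscheme $\fW\subset\fX$, and it suffices to build a formal stable map into $\fW$ and then compose with $\fW\hookrightarrow\fX$.

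Next, using Raynaud's theory of formal models (cf.\ \cite{Raynaud_Geometrie_analytique_rigide_1974,Bosch_Formal_I}) together with the flattening technique of Raynaud--Gruson and Bosch--Lütkebohmert, I would choose a formal model $\fT$ of $T$ and, after admissible blow-ups, produce a proper flat formal model $\fC\to\fT$ of $C\to T$, extend the sections $s_i$ to sections $\fs_i$ of $\fC\to\fT$, and extend $f$ to a morphism $\ff\colon\fC\to\fW\subset\fX$ (a morphism between generic fibres extends over a sufficiently fine pair of formal models). At this point $\fC\to\fT$ is a proper flat family of curves whose generic fibre is nodal, but whose special fibre may be badly singular. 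By formal GAGA one may algebraize $\fC\to\fT$ to a proper morphism of schemes over $k^\circ$, together with the sections, which makes de Jong's results (statements about schemes over a complete discrete valuation ring) directly applicable; the morphism $\ff$ need not be algebraized, since the modifications below only change $\fC$ by proper modifications and base changes, along which $\ff$ simply pulls back by composition.

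The heart of the argument — and the step I expect to be the main obstacle — is to improve $\fC\to\fT$ into a family of \emph{nodal} curves using only operations permitted by the statement. Here I would, after replacing $T$ by a quasi-étale covering (Kummer coverings of the form $\SpB(A\{S\}/(S^m-\varpi))$, which are finite étale on the generic fibre since $\abs{S}=\abs{\varpi}^{1/m}\neq0$, together with finite étale coverings) in order to make the special fibre of $\fT$ reduced, apply the relative semistable reduction theorem for families of curves with marked sections following de Jong \cite{de_Jong_Smoothness_1996,de_Jong_Families_1997} (see also \cite{Abramovich_Alterations_2000}), treating $\sum\fs_i(\fT)$ as a horizontal boundary divisor. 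The content is that, after a further admissible modification of $\fT$ and of $\fC$ — which is an isomorphism on generic fibres — the family $\fC\to\fT$ becomes a family of semistable, i.e.\ reduced at worst nodal, curves of arithmetic genus $g$ (flatness forces the arithmetic genus of every geometric fibre to stay $g$), with the $\fs_i$ still disjoint and contained in the smooth locus. The delicate point is precisely to arrange all of de Jong's base changes and modifications so that the base change contributes only a quasi-étale covering of $T$ (modifications of the base being absorbed into the choice of formal model, whose generic fibre is unchanged) while the marked sections and the genus are kept under control. Re-extending $\ff$ along the modification of $\fC$ and renaming, we obtain an \gn formal pre-stable map $(\fC\to\fT,(\fs_i),\ff)$ into $\fX$ whose generic fibre is $(C\to T,(s_i),f)$.

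Finally I would apply stabilisation. By \cref{prop:moduli_of_formal_stable_maps}, this formal pre-stable map corresponds to a compatible system of algebraic pre-stable maps into $X_m=\fX\times_{k^\circ}S_m$; applying the base-change compatible stabilisation of algebraic pre-stable maps level by level — contracting exactly the rational components that are contracted by $\ff$ and carry fewer than three special points, so that $\ff$ automatically descends through the contraction (this needs no projectivity of $\fX$) — yields a compatible system of algebraic stable maps, hence an \gn formal stable map $(\fC^\mathrm{st}\to\fT,(\fs_i^\mathrm{st}),\ff^\mathrm{st})$ into $\fX$ (cf.\ \cite{Behrend_Stacks_GW_1996,Fulton_Stable_1997}). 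The stabilisation comes with a contraction $\fC\to\fC^\mathrm{st}$ over $\fT$; since its generic fibre $(C\to T,(s_i),f)$ is already stable by hypothesis, this contraction is an isomorphism on generic fibres, so applying $(\cdot)_\eta$ to $(\fC^\mathrm{st}\to\fT,(\fs_i^\mathrm{st}),\ff^\mathrm{st})$ returns the $k$-analytic stable map we started with, which completes the plan.
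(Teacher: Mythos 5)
The heart of your argument is exactly where the gap is. You propose to repair the algebraized model $\fC\alg\to\fT\alg$ by invoking a ``relative semistable reduction theorem'' of de Jong, after only Kummer-type quasi-étale coverings of $T$ and admissible modifications. But de Jong's theorems (\cite{de_Jong_Smoothness_1996}, \cite{de_Jong_Families_1997}) produce an \emph{alteration} of the base, and an alteration of $\fT\alg$ --- even a generically étale one --- restricts over the analytic generic fibre to a finite morphism that may be ramified at points of $T$: its ramification locus is only a proper Zariski-closed subset of $\Spec(\cA\otimes_{\kc}k)$, which in general meets the affinoid $T$, so the resulting covering of $T$ is not quasi-étale. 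Making the special fibre of $\fT$ reduced by Kummer coverings does not remove this, since the base changes demanded by semistable reduction are dictated by the degenerations of the family over all of $\fT\alg$, not by the non-reducedness of the special fibre alone. You yourself flag this as ``the delicate point'' and then assert it; that assertion is precisely what needs proof, and no published form of de Jong's theorem supplies it.

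The paper's proof avoids global semistable reduction altogether, exploiting (as you intend, but by a different mechanism) that $C\to T$ is already nodal. After adding extra sections over an étale covering of $\fT\alg$ (quasi-étale on $T$) so that every component of every geometric fibre meets three sections, the generic fibre defines a classifying morphism to the proper Deligne--Mumford stack $\bcMgnprime$; monodromy is killed by pulling back a representable étale atlas $M^0\to\bcMgnprime$ (again quasi-étale on $T$), and the family of stable curves is extended over a proper modification $\widehat\fT\alg$ of $\fT\alg$ by taking the closure of the image in $\fT\alg\times\overline M^0$, where $\overline M^0$ is the normalization of $\bcMgnprime$ in $M^0$ --- so properness of $\bcMgnprime$, not semistable reduction, does the work, and only modifications and quasi-étale coverings ever appear. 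The new stable model is then compared with the old model $\fC\alg$ (to which $\ff$ does extend) via the closure of a graph in a Hom scheme, whose properness over $\widehat\fT\alg$ is checked by the valuative criterion using de Jong's three-point lemma over traits; that is the only place de Jong's techniques enter. Your outer skeleton (reduce to quasi-compact $\fX$, spread out by Raynaud's theory, algebraize by formal GAGA, repair, stabilize --- the last step matching the paper's) is fine, but the repair step needs this moduli-stack and Hom-scheme argument rather than an appeal to relative semistable reduction.
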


\begin{proof}
By the theory of formal models (cf.\ \cite{Raynaud_Geometrie_analytique_rigide_1974,Bosch_Formal_I}), we can obtain a formal model $\fC$ of $C$ and a formal model $\fT$ of $T$ such that the morphisms of $k$-analytic spaces $C\rightarrow T$, $f\colon C\rightarrow\fXe$ and $s_i\colon T\rightarrow C$ extend to morphisms of formal schemes $\fC\rightarrow\fT$, $\ff\colon \fC\rightarrow\fX$ and $\fs_i\colon \fT\rightarrow\fC$.
Our aim is to modify the triple $\big(\fC\rightarrow\fT,(\fs_i),\ff\big)$ in order to obtain a formal stable map into $\fX$.
Our proof uses ideas from the works of de Jong's on alterations \cite{de_Jong_Smoothness_1996,de_Jong_Families_1997}.
For clarity, we decompose our reasonings into 6 steps.

\smallskip
\emph{Step 1 (algebraize).}
Let $\cA$ be the topological algebra finitely presented over $\kc$ such that $\fT=\Spf\cA$.
Put $\fT\alg=\Spec\cA$.
Since $\fC\rightarrow\fT$ is a family of proper formal curves, up to passing to a Zariski covering of $\fT$, we can assume that the morphism $\fC\rightarrow\fT$ is projective.
By formal GAGA (cf.\ \cite{EGA3-1} \S 5), the morphism $\fC\rightarrow\fT$ is isomorphic to the completion of a family of algebraic curves $\fC\alg\rightarrow\fT\alg$ along the special fibers over the residue field $\tilde k$.
By formal GAGA again, we can algebraize the sections $(\fs_i)$ as well.
So we obtain a family of pointed algebraic curves $\big(\fC\alg\rightarrow\fT\alg,(\fs_i)\big)$.

\smallskip
\emph{Step 2 (rigidify).}
Up to passing to an étale covering of $\fT\alg$, we can add extra disjoint sections $(\fs'_j)$ such that every irreducible component of every geometric fiber of the morphism $\fC\alg\rightarrow\fT\alg$ meets at least three sections. Denote by $n'$ the total number of sections.

\smallskip
\emph{Step 3 (kill monodromies).}
If we pass to the generic fibers of the family of pointed algebraic curves $\big(\fC\alg\rightarrow\fT\alg,(\fs_i)\cup(\fs'_j)\big)$, i.e.\ take fiber product with $\Spec k$ over $\Spec k^\circ$, we obtain an \gnprime algebraic stable curve over $\fT_k\alg\coloneqq\fT\alg\times_{\kc} k$.
This gives rise to a morphism $t_C$ from $\fT_k\alg$ to the Deligne-Mumford stack $\bcMgnprime$ of \gnprime algebraic stable curves over $k$.
Let $M^0\rightarrow\bcMgnprime$ be a representable étale atlas of the stack.
Put $\fT^{\mathrm{alg},0}_k=\fT_k\alg\times_{\bcMgnprime} M^0$.
Denote by $t^0_C$ the morphism $\fT^{\mathrm{alg},0}_k\rightarrow M^0$ obtained from the fiber product.
We obtain an \gnprime algebraic stable curve over $\fT^{\mathrm{alg},0}_k$ by pulling back the universal family over $\bcMgnprime$.

\smallskip
\emph{Step 4 (extend families of stable curves).}
By the properness of the Deligne-Mumford stack $\bcMgnprime$, one can assume that the scheme $M^0$ is of finite type over $k$.
Let $\overline M^0$ be the normalization of $\bcMgnprime$ in the total ring of fractions of $M^0$.
Then $\overline M^0$ is a proper scheme by the properness of $\bcMgnprime$.
Let $\widehat\fT\alg$ be the closure of the image of the morphism $\fT^{\mathrm{alg},0}_k\rightarrow \fT\alg\times\overline M^0$.
We obtain a proper morphism $\widehat \fT\alg\rightarrow\fT\alg$ and a morphism $\widehat \fT\alg\rightarrow\overline M^0$.
The latter morphism gives rise to an \gnprime algebraic stable curve $\widehat\fC\alg$ over $\widehat\fT\alg$ by pulling back the universal family over $\bcMgnprime$.

\smallskip
\emph{Step 5 (extend morphisms of stable curves).} Put $\overline\fC\alg=\fC\alg\times_{\fT\alg}\widehat\fT\alg$.
By construction, we have a morphism $\widehat\fC\alg\times_{\kc} k\rightarrow\overline\fC\alg\times_{\kc} k$ over $\widehat\fT\alg\times_{\kc} k$.
We would like to extend it to a morphism from $\widehat\fC\alg$ to $\overline\fC\alg$ over $\widehat\fT\alg$.
Consider the Hom scheme $\mathit{Mor}\coloneqq\Mor_{\widehat\fT\alg}\big(\widehat\fC\alg,\overline\fC\alg\big)$ parameterizing morphisms from $\widehat\fC\alg$ to $\overline\fC\alg$ over $\widehat\fT\alg$ (see \cite{Mumford_Geometric_1994}).
The morphism $\widehat\fC\alg\times_{\kc} k\rightarrow\overline\fC\alg\times_{\kc} k$ corresponds to a morphism $t_M\colon \widehat\fT\alg\times_{\kc} k\rightarrow \mathit{Mor}$. Let $\widehat{\widehat\fT\alg}$ be the closure of the image of $t_M$ in the scheme $\mathit{Mor}$.
We claim that the morphism $\widehat{\widehat\fT\alg}\rightarrow\widehat\fT\alg$ is proper.
Indeed, the properness can be verified using the valuative criterion.
Let $T'$ be a trait, i.e.\ the spectrum of a complete discrete valuation ring.
Let $T'\rightarrow\widehat\fT\alg$ be a morphism that sends the generic point $\xi$ of $T'$ to the generic fiber $\widehat\fT\alg\times_{\kc} k$ of $\widehat\fT\alg$.
Let $\widehat C'\rightarrow T'$ and $\overline C'\rightarrow T'$ denote the pullback of $\widehat\fC\alg$ and $\overline\fC\alg$ respectively along the morphism $T'\rightarrow\widehat\fT\alg$.
We have a morphism $\widehat C'_\xi\rightarrow\overline C'_\xi$ over the generic point $\xi$ of $T'$ compatible with the sections.
Moreover, every irreducible component of every geometric fiber of the morphism $\overline C'\rightarrow T'$ has at least three marked points by Step 2.
In the case where the generic fiber $\widehat C'_\xi$ is smooth, the total space $\widehat C'$ is regular.
Let $G$ denote the closure of the graph of the morphism $\widehat C'_\xi\rightarrow\overline C'_\xi$ inside $\widehat C'\times_{T'} \overline C'$.
We can apply de Jong's three point lemma to the normalizations of $G$ and $\overline C'$ (cf.\ \cite{de_Jong_Smoothness_1996} \S 4.18, \cite{Abramovich_Alterations_2000} \S 4.9, \S 5.1, \cite{Abramovich_Stable_2001}).
So the morphism $\widehat C'_\xi\rightarrow\overline C'_\xi$ can be extended to a morphism $\widehat C'\rightarrow\overline C'$ thanks to the three point condition.
For the general case, by the deformation theory of pointed stable curves, any double point on the generic fiber $\widehat C'_\xi$ comes from a double point on the special fiber $\widehat C'_s$. We can make a finite base change of $T'$ so that all the double points split. Denote by $\widetilde{\widehat C'}$ the normalization of $\widehat C'$. Then the generic fiber $\big(\widetilde{\widehat C'}\big)_\xi$ is smooth. Applying the argument above to $\widetilde{\widehat C'}$, the morphism $\big(\widetilde{\widehat C'}\big)_\xi\rightarrow\overline C'_\xi$ over $T'_\xi$ extends to a morphism $\widetilde{\widehat C'}\rightarrow\overline C'$ over $T'$. It descends to a morphism $\widehat C'\rightarrow\overline C'$ over $T'$ by continuity.
By the valuative criterion of properness, we conclude that the morphism $\widehat{\widehat\fT\alg}\rightarrow\widehat\fT\alg$ is proper.
Therefore, by replacing $\widehat\fT\alg$ with $\widehat{\widehat\fT\alg}$, and by replacing $\widehat\fC\alg$ and $\overline\fC\alg$ with their pullbacks, the morphism $\widehat\fC\alg\times_{\kc} k\rightarrow\overline\fC\alg\times_{\kc} k$ can be extended to a morphism $\widehat\fC\alg\rightarrow\overline\fC\alg$.

\smallskip
\emph{Step 6 (complete).} Let $\widehat\fT$, $\widehat\fC$, $\overline\fC$ be the completions of the schemes $\widehat\fT\alg$, $\widehat\fC\alg$, $\overline\fC\alg$ respectively along their special fibers over the residue field $\tilde k$.
Then $\big(\widehat\fC\rightarrow\widehat\fT, (\fs_i)\cup(\fs'_j)\big)$ is a formal stable curve, where $(\fs'_j)$ denote the extra sections introduced in Step 2.
Let $\widehat\ff$ be the composition of the morphisms
\[\widehat\fC\rightarrow\overline\fC\rightarrow\fC\xrightarrow{\ff}\fX .\]
If we take into account the extra sections $(\fs'_j)$, we have constructed a formal stable map $\big(\widehat\fC\rightarrow\widehat\fT, (\fs_i)\cup(\fs'_j),\widehat\ff\big)$ into $\fX$ over $\widehat\fT$.
Now we remove these extra sections and then contract the non-stable components,
we obtain an \gn formal stable map into $\fX$ over $\widehat\fT$ which we denote by $\Big(\overline{\widehat\fC}\rightarrow\widehat\fT,(\fs_i),\overline{\widehat\ff}\Big)$.
If we apply the generic fiber functor $(\cdot)_\eta$ to it, we get back the $k$-analytic stable map $\big(C\rightarrow T,(s_i),f\big)$ we started with, up to a base change by a quasi-étale covering of $T$.
\end{proof}

\begin{proof}[Proof of \cref{thm:moduli_of_analytic_stable_maps}]
Let $\fT$ be an affine formal scheme finitely presented over $\kc$.
A morphism $\fT\to\bcMgn(\fX)$ give rise to an \gn formal stable map into $\fX$ over $\fT$.
By \cref{lem:generic_fiber_of_formal_stable_maps}, applying the generic fiber functor, we obtain an \gn \kanal stable map into $\fXe$ over $\fT_\eta$.
So we obtain a morphism $\fT_\eta\to\bcMgn(\fXe)$.
This construction gives rise to a natural morphism $\big(\bcMgn(\fX)\big)_\eta\to\bcMgn(\fXe)$.

Now we show that for any strictly $k$-affinoid space $T$, the functor
\begin{equation}\label{eq:moduli_of_analytic_stable_maps}
\big(\bcMgn(\fX)\big)_\eta(T) \xrightarrow{\ \sim\ } \bcMgn(\fXe)(T)
\end{equation}
is an equivalence of groupoids.
The functor is faithful by construction.
Let us prove that it is full.
Let $\big(\fC_1\rightarrow\fT_1, (\fs_{i,1}), \ff_1\big)$
and $\big(\fC_2\rightarrow\fT_2, (\fs_{i,2}), \ff_2\big)$ be two \gn formal stable maps into $\fX$.
Assume that when we pass to the generic fibers, we have an isomorphism of the $k$-analytic stable maps
\begin{equation}\label{eq:extend_isom}
\big((\fC_1)_\eta\rightarrow(\fT_1)_\eta, ((\fs_{i,1})_\eta), (\ff_1)_\eta\big)\xrightarrow{\ \sim\ }
\big((\fC_2)_\eta\rightarrow(\fT_2)_\eta, ((\fs_{i,2})_\eta), (\ff_2)_\eta\big).
\end{equation}
We can assume that the formal schemes $\fT_1$ and $\fT_2$ are flat over $k^\circ$ by killing the torsions.
Up to replacing $\fT_1$ and $\fT_2$ by admissible blowups, we can assume that $\fT_1\simeq\fT_2$, and we denote them by $\fT_{12}$.
Up to passing to a Zariski covering of $\fT_{12}$, using the formal GAGA theorem as in the proof of \cref{thm:formal_model_stable_maps}, we can assume that the pointed formal pre-stable curves
$\big(\fC_1\rightarrow\fT_{12}, (\fs_{i,1})\big)$
and $\big(\fC_2\rightarrow\fT_{12}, (\fs_{i,2})\big)$ are completions of pointed algebraic pre-stable curves, which we denote by
$\big(\fC_1\alg\rightarrow\fT_{12}\alg, (\fs_{i,1})\big)$
and $\big(\fC_2\alg\rightarrow\fT_{12}\alg, (\fs_{i,2})\big)$ respectively.
Up to passing to an étale covering of $\fT_{12}\alg$, we can add extra sections to the pointed algebraic pre-stable curves above to obtain two pointed algebraic stable curves $\big(\fC_1\alg\rightarrow\fT_{12}\alg, (\fs_{i,1})\cup(\fs'_{j,1})\big)$ and $\big(\fC_2\alg\rightarrow\fT_{12}\alg, (\fs_{i,2})\cup(\fs'_{j,2})\big)$, such that $(\fs'_{j,1})_\eta\simeq(\fs'_{j,2})_\eta$ for every $j$.
Consider the $\Isom$ scheme parameterizing isomorphisms between the algebraic stable curves
\[\mathit{Isom} \coloneqq \Isom_{\fT_{12}\alg}\Big(\big(\fC_1\alg\rightarrow\fT_{12}\alg,(\fs_{i,1})\cup(\fs'_{j,1})\big), \big(\fC_2\alg\rightarrow\fT_{12}\alg, (\fs_{i,2})\cup(\fs'_{j,2})\big)\Big).\]
We have a morphism $t_I:\fT_{12}\alg\times_{\kc} k\rightarrow \mathit{Isom}$ induced by the isomorphism over the generic fibers.
Let $\widehat{\fT_{12}\alg}$ be the closure of the image of $t_I$ in the scheme $\mathit{Isom}$.
Using the separatedness of the moduli space of pointed algebraic stable curves and the valuative criterion of properness, we prove that the morphism $\widehat{\fT_{12}\alg}\rightarrow \fT_{12}\alg$ is proper.
Let $\widehat{\fT_{12}}$ denote the completion of $\widehat{\fT_{12}\alg}$ along its special fiber over the residue field $\tilde k$.
We deduce that the isomorphism of the $k$-analytic pre-stable curves
\[\big((\fC_1)_\eta\rightarrow(\fT_1)_\eta, ((\fs_{i,1})_\eta)\big)\xrightarrow{\ \sim\ }
\big((\fC_2)_\eta\rightarrow(\fT_2)_\eta, ((\fs_{i,2})_\eta)\big)\]
in \eqref{eq:extend_isom} can be extended over the formal scheme $\widehat{\fT_{12}}$.
The isomorphism $(\ff_1)_\eta\simeq(\ff_2)_\eta$ also extends by continuity.
This implies the fullness of the functor \eqref{eq:moduli_of_analytic_stable_maps}.

In order to show that the functor \eqref{eq:moduli_of_analytic_stable_maps} is essentially surjective, we start with a morphism $T\to\bcMgn(\fXe)$.
This gives rise to an \gn $k$-analytic stable map $\big(C\rightarrow T,(s_i),f\big)$ into $\fXe$ over $T$.
By \cref{thm:formal_model_stable_maps}, there exists a quasi-étale covering $T^0\to T$, a formal model $\fT^0$ of $T^0$ and an \gn formal stable map into $\fX$ over $\fT^0$ that gives the pullback to $T^0$ of the $k$-analytic stable map $\big(C\rightarrow T,(s_i),f\big)$ when we apply the generic fiber functor.
So we obtain a morphism $\fT^0\to\bcMgn(\fX)$ and a morphism $T^0\to\big(\bcMgn(\fX)\big)_\eta$.
Using the proof of the fullness of the functor \eqref{eq:moduli_of_analytic_stable_maps}, we see that the morphism $T^0\to\big(\bcMgn(\fX)\big)_\eta$ descends to a morphism $T\to\big(\bcMgn(\fX)\big)_\eta$, which corresponds to the morphism $T\to\bcMgn(\fXe)$ we started with.
We conclude that the functor \eqref{eq:moduli_of_analytic_stable_maps} is an equivalence of groupoids, completing the proof of the theorem.
\end{proof}

\section{Gromov compactness in non-archimedean analytic geometry} \label{sec:non-archimedean_Gromov_compactness}

In this section, we prove the non-archimedean Gromov compactness theorem.

Let $X$ be a $k$-analytic space, and let $\hL$ be a Kähler structure on $X$ with respect to an SNC formal model $\fX$ of $X$ (Definition \ref{def:kahler}).
Let $C$ be a connected proper smooth $k$-analytic curve and let $f\colon C\rightarrow X$ be a morphism.

The degree of the morphism $f$ with respect to $\hL$ is defined as follows.
We choose an SNC formal model $\fC$ of $C$ and a morphism $\ff\colon \fC\rightarrow\fX$ such that $\ff_\eta\simeq f$.
The pullback $\ff^*L$ is a virtual line bundle on $C$ with respect to the formal model $\fC$. We define $\deg f = \deg \ff^*L$ (cf.\ Definition \ref{def:degree_of_virtual_line_bundle}).
Since two different choices of the formal model $\fC$ can always be dominated by another one, one can show that the degree $\deg f$ does not depend on the choice.
If the $k$-analytic curve $C$ is not smooth but with ordinary double points, we define the degree of the morphism $f$ to be the sum of the degree for every connected component after taking normalization. 

Similarly, for a morphism $f\colon \cC\rightarrow\fXs$ from an algebraic pre-stable curve $\cC$ over the residue field $\tilde k$ to $\fXs$, we define $\deg f=\deg f^* c(\hL)$ (cf.\ Definition \ref{def:degree_of_curvature}).
For a morphism $\ff\colon \fC\rightarrow\fX$ from a formal pre-stable curve $\fC$ over $k^\circ$ to $\fX$, we define $\deg \ff = \deg \ff_s$.

Fix a positive real number $A$. Denote by $\overline\cM_{g,n} (X,A)$ the moduli stack of \gn $k$-analytic stable maps into $X$ whose degree with respect to $\hL$ is bounded by $A$. Similarly, we define the moduli stacks $\overline\cM_{g,n} (\fX,A)$ and $\overline\cM_{g,n} (\fXs,A)$.
Proposition \ref{prop:moduli_of_formal_stable_maps} and Theorem \ref{thm:moduli_of_analytic_stable_maps} imply the following isomorphisms
\begin{align*}
\overline\cM_{g,n} (\fX,A)_s &\simeq \overline\cM_{g,n} (\fXs,A),\\
\overline\cM_{g,n} (\fX,A)_\eta &\simeq \overline\cM_{g,n} (X,A).
\end{align*}

\begin{prop}\label{prop:properness_of_special_fiber}
The moduli stack $\bcMgn(\fXs,A)$ is an algebraic stack of finite type over the residue field $\tilde k$. It is a proper algebraic stack if the formal model $\fX$ is proper.
\end{prop}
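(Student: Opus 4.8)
The plan is to bootstrap from the algebraicity established in \cref{thm:algebraicity_of_stack_of_stable_maps}, to extract finite type from a boundedness estimate powered by the positivity of the Kähler structure, and to extract properness in the proper case from the valuative criterion together with the known algebraic Gromov compactness for projective targets. First I would note that, $\fX$ being an SNC formal scheme finitely presented over $\kc$, the special fiber $\fXs$ is a scheme of finite type over $\tilde k$, so \cref{thm:algebraicity_of_stack_of_stable_maps} gives that $\bcMgn(\fXs)$ is an algebraic stack locally of finite presentation over $\tilde k$. The curve class of a stable map is locally constant on the moduli stack, and the degree $\deg f = \deg f^{*}c(\hL)$ (cf.\ \cref{def:degree_of_curvature}) depends only on this class because $c(\hL)$ is a fixed collection of numerical classes; hence $\bcMgn(\fXs,A)$ is an open and closed algebraic substack, and it remains to prove that it is quasi-compact in general, and separated and universally closed once $\fX$ is proper.

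For finite type I would exploit the strict convexity of the metrization. Since $\IX$ is finite there are only finitely many strata $D_{I}$, and by \cref{lem:compatibility_of_curvature} the restrictions $(\partial_{i}\varphi_{i})|_{D_{I}}$ agree for all $i\in I$, giving a well-defined class $h_{I}\in N^{1}(D_{I})_{\R}$ that is ample, being the restriction to the closed subscheme $D_{I}$ of the ample class $\partial_{i}\varphi_{i}$ supplied by \cref{def:kahler}. Given a stable map $f\colon\cC\to\fXs$ with $\deg f\le A$, I would decompose $\cC$ into irreducible components $\cC_{\alpha}$, let $D_{I_{\alpha}}$ be the smallest stratum through which $f|_{\cC_{\alpha}}$ factors, and observe that the degree unwinds into a sum of contributions of the shape (a positive integer) times $\big(h_{I_{\alpha}}\cdot f_{*}[\cC_{\alpha}]\big)$; each such term is $\ge 0$ by ampleness of $h_{I_{\alpha}}$, hence each is $\le A$. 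Ampleness of the finitely many classes $h_{I}$ then pins the realizable curve classes $f_{*}[\cC_{\alpha}]$ down to a finite set and confines the images $f(\cC_{\alpha})$ to a bounded family of proper curves in the quasi-projective strata; together with the fixed genus this bounds the combinatorial type of $\cC$ and the remaining algebraic data of $f$. The plan is to conclude quasi-compactness of $\bcMgn(\fXs,A)$ from this boundedness by reducing, stratum by stratum and after a Chow-type quasi-projective modification of $\fXs$ if necessary, to the classical finiteness of the moduli of stable maps of bounded degree into a projective variety.

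For the proper case, assume $\fX$ is proper over $\Spf\kc$, so that $\fXs$ is proper over $\tilde k$. Separatedness of $\bcMgn(\fXs,A)$ I would obtain from the separatedness of $\fXs$ and the stability condition, via the valuative criterion of separatedness as in \cite{Kontsevich_Enumeration_1995}. For universal closedness I would check the valuative criterion of properness: given a trait $\Spec R$ with fraction field $K$ and a stable map into $\fXs$ over $\Spec K$, it must extend over $\Spec R$ after a finite base change. When $\fXs$ is projective this is the classical stable reduction for stable maps (\cref{thm:stack_of_stable_maps_projective_case} and \cite{Fulton_Stable_1997}); for a general proper $\fXs$ I would choose a projective scheme $Y$ with a proper surjective morphism $Y\to\fXs$, lift and extend the family over $Y$ after a de Jong--style semistable modification of the source curve (\cite{de_Jong_Smoothness_1996}), and then push the extended family forward to $\fXs$ and contract the resulting unstable components. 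This is the ``proper covering'' reduction to known algebraic compactness results announced in the introduction, and combined with the finite type of the previous paragraph it yields that $\bcMgn(\fXs,A)$ is a proper algebraic stack.

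I expect the main obstacle to be the boundedness step: the curvature classes are ample only on the individual strata and never on $\fXs$ as a whole, so the bounded-degree hypothesis must be converted into genuine quasi-compactness by carefully controlling how a stable map distributes among the finitely many strata and invoking known boundedness only after a quasi-projective modification. In the proper case the subtle point is likewise the passage from proper to projective $\fXs$ together with the modification of the source curve, where the geometry of stable curves genuinely enters.
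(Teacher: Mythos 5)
Your opening moves match the paper's: use \cref{thm:algebraicity_of_stack_of_stable_maps} for algebraicity, use the strict convexity of $\hL$ to make each component $\tD_i$ of $\fXs$ quasi-projective (projective when $\fX$ is proper), decompose a stable map along the components, bound each piece by its degree, and reduce to the classical projective case. But the decisive step of the paper is missing from your plan. The paper converts the per-component bounds into quasi-compactness and properness by an explicit construction: it shows there are only finitely many \emph{decomposition data} of type $(g,n)$, builds for each the stack $\bcMgn(\fXs,A)_{\mathrm{dec}}$ of tuples of stable maps into the $\tD_i$ with matching conditions at the marked points lying over the former double points and with $\sum A_{ij}\le A$, realizes it as a closed substack of a product of stacks $\bcMgnijprime(\tD_i,A)$ handled by \cref{thm:stack_of_stable_maps_projective_case}, and then glues to get a \emph{finite surjective} morphism onto $\bcMgn(\fXs,A)$. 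Your substitute for this --- ``ample classes pin down the curve classes, then reduce after a Chow-type quasi-projective modification of $\fXs$'' --- does not do the job: finiteness of realizable classes by itself is not quasi-compactness of the stack, and modifying $\fXs$ does not help, since stable maps do not lift through a modification of the target without altering the source curves (changing genus and markings). No modification of $\fXs$ is needed or used in the paper; the Kähler structure already makes the strata quasi-projective, and the finite cover by the decomposition stack is what carries the boundedness over to $\bcMgn(\fXs,A)$.

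The properness claim is where your route genuinely diverges and where the gap is most serious. In the paper, properness is a free consequence of the same construction: when $\fX$ is proper each $\tD_i$ is projective, each $\bcMgnijprime(\tD_i,A)$ is a proper algebraic stack, the decomposition stack is a closed substack of their product and hence proper, and a finite surjective cover by a proper stack (together with separatedness, already noted after \cref{thm:algebraicity_of_stack_of_stable_maps}) yields properness of $\bcMgn(\fXs,A)$. Your plan instead checks the valuative criterion by choosing a projective alteration $Y\to\fXs$, lifting the generic-fiber family after a de Jong--style modification of the source, extending over $Y$, and pushing forward. As stated this has a hole exactly at the crucial point: lifting through $Y\to\fXs$ forces an alteration of the source curve, so the extended object is a stable map from a curve of different genus and marking data; descending its special fiber back to an \gn stable map into $\fXs$ of the original genus that extends the original family over $\Spec K$, while preserving the degree bound $A$, is the hard step and is not addressed. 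The paper never confronts the general proper-but-non-projective target problem precisely because the SNC structure of $\fXs$ lets it work component by component, where everything is projective.
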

\begin{proof}
Let $\big\{\,\tD_i\;\big|\;i\in\IX\,\big\}$ denote the set of the irreducible components of the special fiber $\fXs$.
The Kähler structure $\widehat L$ implies that every $\tD_i$ is quasi-projective.
It follows from the boundedness of Hilbert schemes that there exists an integer $N^0_i$ such that for any stable map $\big(C\rightarrow\Spec\tilde k,(s_i),f\big)$ into $\tD_i$ of genus bounded by $g$ and degree bounded by $A$, the number of irreducible components of $C$ is bounded by $N_i^0$.

Given a stable map $F\coloneqq\big(C\rightarrow\Spec\tilde k,(s_i),f\big)$ into $\fXs$, by normalizing certain double points and adding new marked points to the previous double points, we can decompose $F$ into finitely many stable maps $F_{ij}$ for $i\in\IX$, $j\in\{1,\dots,N_i\}$, $N_i\ge 0$, such that the stable map $F_{ij}$ maps into the component $\tD_i$.
Note that this decomposition is not necessarily unique.
If given such a decomposition, we let $g_{ij}$ denote the genus of the stable map $F_{ij}$, let $n_{ij}$ denote the number of marked points on $F_{ij}$ that come from the marked points on $F$, and let $n_{ij}^{i'j'}$ denote the number of double points $\nu$ on $F$ such that in the decomposition, one branch of $\nu$ lies in $F_{ij}$ and the other branch lies in $F_{i'j'}$.

Motivated by the consideration above, we introduce the following definition.

\begin{defin}
A \emph{decomposition datum} $\cD$ is a collection of the following non-negative integers:
\begin{itemize}
\item $N_i$ for all $i\in\IX$, such that $N_i\leq N_i^0$,
\item $g_{ij}, n_{ij}$ for all $i\in\IX$, $j\in\{1,\dots,N_i\}$,
\item $n_{ij}^{i'j'}$ for all $i,i'\in\IX$, $j\in\{1,\dots,N_i\}$, $j'\in\{1,\dots,N_{i'}\}$, such that $n_{ij}^{i'j'}=0$ whenever $i=i'$, and $n_{ij}^{i'j'} = n_{i'j'}^{ij}$ for all $i,j,i',j'$.
\end{itemize}
\end{defin}

Given a decomposition datum $\cD$, we associate to it a graph $G_\cD$ as follows. The vertices of $G_\cD$ are labeled by pairs of integers $(i,j)$ for all $i\in\IX$, $j\in\{1,\dots,N_i\}$. The number of edges between the vertex $(i,j)$ and the vertex $(i',j')$ is given by $n_{ij}^{i'j'}$. Let $b_1(G_\cD)$ denote the first Betti number of the graph $G_\cD$.

\begin{defin}
A decomposition datum $\cD$ is said to be of type $(g,n)$ if
\begin{enumerate}[(i)]
\item the associated graph $G_\cD$ is connected,
\item $b_1(G_\cD)+\sum_{i,j} g_{ij} = g$, and
\item $\sum_{i,j} n_{ij}=n$.
\end{enumerate}
\end{defin}

It follows from the definitions that

\begin{lem}
There is only a finite number of decomposition data of type $(g,n)$.
\end{lem}

Given a decomposition datum $\cD$ of type $(g,n)$, we consider the following construction. For every $i\in\IX$, $j\in\{1,\dots,N_i\}$, put $n'_{ij} = n_{ij}+\sum_{i',j'} n_{ij}^{i'j'}$.
Let $\bcMgnijprime(\tD_i,A)$ denote the moduli stack of $n'_{ij}$-pointed genus $g_{ij}$ stable maps into the irreducible component $\tD_i$ of degree bounded by $A$.
The Kähler structure $\widehat L$ implies that the irreducible component $\tD_i$ is quasi-projective.
If the formal model $\fX$ is proper, then $\tD_i$ is projective.
In this case, $\bcMgnijprime(\tD_i,A)$ is a proper algebraic stack over the residue field $\tilde k$ by Theorem \ref{thm:stack_of_stable_maps_projective_case}.
If we don't assume the properness of $\fX$, then $\bcMgnijprime(\tD_i,A)$ is an open sub-stack of a proper algebraic stack.
So it is an algebraic stack of finite type over $\tilde k$.

Let us label the $n'_{ij}$ marked points as follows:
\begin{multline*}
(1)_{ij}, (2)_{ij},\dots,(n_{ij})_{ij},\\
\text{and } (1)_{ij}^{i'j'}, (2)_{ij}^{i'j'},\dots,(n_{ij}^{i'j'})_{ij}^{i'j'}\text{ for all }i'\in\IX, j'\in\{i,\dots,N_{i'}\}.
\end{multline*}

Let $\bcMgn(\fXs,A)_\text{dec}$ denote the moduli stack of collections of $n'_{ij}$-pointed genus $g_{ij}$ stable maps into $\tD_i$ for every $i\in\IX$, $j\in\{1,\dots N_i\}$ which satisfy the following conditions:
\begin{enumerate}[(i)]
\item The marked point with label $(l)_{ij}^{i'j'}$ maps into the stratum $\tD_{\{i,i'\}}=\tD_i\cap \tD_{i'}$.
\item The marked point with label $(l)_{ij}^{i'j'}$ and the marked point with label $(l)^{ij}_{i'j'}$ map to the same point in the stratum $\tD_{\{i,i'\}}$.
\item If we denote by $A_{ij}$ the degree of the $n'_{ij}$-pointed genus $g_{ij}$ stable map into $\tD_i$ with respect to $\hL$, then we have $\sum A_{ij}\leq A$.
\end{enumerate}

The morphism
\[\bcMgn(\fXs,A)_\text{dec}\rightarrow\prod_{i,j} \bcMgnijprime(\tD_i,A)\]
is a closed embedding of finite presentation. Therefore, it is an algebraic stack of finite type over $\tilde k$. It is a proper algebraic stack if $\fX$ is proper.

Furthermore, by gluing the marked point with label $(l)_{ij}^{i'j'}$ to the marked point with label $(l)^{ij}_{i'j'}$ for every $i,j,i',j',l$, we obtain a finite surjective morphism from the stack $\bcMgn(\fXs,A)_\text{dec}$  to the stack $\bcMgn(\fXs,A)$.
Theorem \ref{thm:algebraicity_of_stack_of_stable_maps} implies that the stack $\bcMgn(\fXs,A)$ is an algebraic stack locally of finite presentation over the residue field $\tilde k$.
We deduce that the stack $\bcMgn(\fXs, A)$ is an algebraic stack of finite type over $\tilde k$. It is a proper algebraic stack if $\fX$ is proper.
\end{proof}

\begin{cor}\label{cor:properness_of_formal_moduli}
The moduli stack $\bcMgn(\fX,A)$ is a formal stack finitely presented over $k^\circ$. It is proper if $\fX$ is proper.
\end{cor}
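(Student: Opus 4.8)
The plan is to deduce this corollary from the two results just proved: \cref{prop:moduli_of_formal_stable_maps}, which presents $\bcMgn(\fX)$ as a formal stack locally finitely presented over $k^\circ$ with special fiber $\bcMgn(\fXs)$, and \cref{prop:properness_of_special_fiber}, which controls this special fiber once the degree bound is imposed. First I would record that for a formal stable map $\big(\fC\rightarrow\fT,(\fs_i),\ff\big)$ the degree is by definition $\deg\ff=\deg\ff_s$, and that the degree of an algebraic stable map into $\fXs$ is a locally constant function on $\bcMgn(\fXs)$ — it is a sum of intersection numbers of fixed numerical classes with the components of the source curve, and such numbers are constant in flat proper families. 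Hence the locus where the degree is $\leq A$ is open and closed in $\bcMgn(\fXs)$. Since open and closed subschemes of a formal scheme over $k^\circ$ correspond to open and closed subschemes of its special fiber, and $(\bcMgn(\fX))_s\simeq\bcMgn(\fXs)$, the condition $\deg\ff\leq A$ cuts out an open and closed formal substack $\bcMgn(\fX,A)\subset\bcMgn(\fX)$; by \cref{prop:moduli_of_formal_stable_maps} this is again a formal stack locally finitely presented over $k^\circ$ (its diagonal being a restriction of the representable diagonal of $\bcMgn(\fX)$), and applying the special fiber functor gives $\bcMgn(\fX,A)_s\simeq\bcMgn(\fXs,A)$.

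Next I would upgrade ``locally finitely presented'' to ``finitely presented'' by checking quasi-compactness on the special fiber. By \cref{prop:properness_of_special_fiber}, $\bcMgn(\fXs,A)$ is an algebraic stack of finite type over $\tilde k$, in particular quasi-compact. Given a smooth effective epimorphism $\fU\to\bcMgn(\fX,A)$ with $\fU$ a formal scheme locally finitely presented over $k^\circ$, the induced $\fU_s\to\bcMgn(\fXs,A)$ is a smooth atlas of the special fiber, so by quasi-compactness of the target I can shrink $\fU_s$ to a quasi-compact open that still surjects onto $\bcMgn(\fXs,A)$; because a formal scheme over $k^\circ$ and its special fiber have the same underlying topological space, this open corresponds to a quasi-compact open formal subscheme $\fU^0\subset\fU$. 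Then $\fU^0$ is finitely presented over $k^\circ$, and $\fU^0\to\bcMgn(\fX,A)$ is still a smooth effective epimorphism since surjectivity can be tested on special fibers. Therefore $\bcMgn(\fX,A)$ is finitely presented over $k^\circ$.

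Finally, for properness: if $\fX$ is proper then \cref{prop:properness_of_special_fiber} asserts that $\bcMgn(\fXs,A)$ is a proper algebraic stack over $\tilde k$, and since $\bcMgn(\fX,A)_s\simeq\bcMgn(\fXs,A)$ and a morphism of formal stacks is proper by definition exactly when it is proper on special fibers, this is precisely the statement that $\bcMgn(\fX,A)$ is proper over $k^\circ$. I do not expect a genuine obstacle in this argument: essentially all of the work is already contained in \cref{prop:moduli_of_formal_stable_maps} and \cref{prop:properness_of_special_fiber}, and the only points needing care are the transfer of the open/closed condition, of quasi-compactness, and of surjectivity between a formal stack and its special fiber, all of which are routine given the topological identification of a formal scheme over $k^\circ$ with its special fiber.
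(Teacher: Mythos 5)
Your argument is correct and follows exactly the route the paper takes: the paper's own proof is the one-line deduction from \cref{prop:moduli_of_formal_stable_maps} and \cref{prop:properness_of_special_fiber}, and your expansion merely spells out the routine checks (the degree bound cutting out an open and closed substack, quasi-compactness descending from the finite-type special fiber, and properness of formal stacks being defined via the special fiber) that the paper leaves implicit. No gap here.
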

\begin{proof}
It follows from Propositions \ref{prop:moduli_of_formal_stable_maps} and \ref{prop:properness_of_special_fiber}.
\end{proof}

\begin{thm}[Non-archimedean Gromov compactness]\label{thm:non-archimedean_Gromov}
Let $X$ be a $k$-analytic space, and let $\hL$ be a Kähler structure on $X$ with respect to an SNC formal model $\fX$ of $X$.
Then the moduli stack $\bcMgn(X,A)$ is a compact $k$-analytic stack.
If we assume moreover that the $k$-analytic space $X$ is proper and that the residue field $\tilde k$ has characteristic zero, then $\bcMgn(X,A)$ is a proper $k$-analytic stack.
\end{thm}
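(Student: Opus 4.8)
The plan is to assemble the statement from the structural results already in place, using the key observation that $\bcMgn(X,A)$ is the generic fiber of the formal stack $\bcMgn(\fX,A)$, whose geometry is controlled by its special fiber $\bcMgn(\fXs,A)$ (treated in \cref{prop:properness_of_special_fiber} and \cref{cor:properness_of_formal_moduli}). Thus essentially no new argument is needed beyond bookkeeping, together with two inputs from non-archimedean geometry that must be applied with care.

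\emph{Compactness.} By \cref{cor:properness_of_formal_moduli} the stack $\bcMgn(\fX,A)$ is a formal stack finitely presented over $\kc$, so there are a formal scheme $\fU$ finitely presented over $\kc$ and a smooth effective epimorphism $\fU\to\bcMgn(\fX,A)$. I would apply the generic fiber functor — which sends smooth morphisms to quasi-smooth morphisms and preserves effective epimorphisms (cf.\ \cref{rem:left_Kan_extension}) — and use the identification $\bcMgn(\fX,A)_\eta\simeq\bcMgn(X,A)$ provided by \cref{prop:moduli_of_formal_stable_maps} and \cref{thm:moduli_of_analytic_stable_maps}. This yields a quasi-smooth effective epimorphism onto $\bcMgn(X,A)$ from $\fU_\eta$, which is a compact strictly $k$-analytic space because $\fU$ is finitely presented over $\kc$. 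Since $\bcMgn(X,A)$ is already known to be a $k$-analytic stack by \cref{thm:moduli_of_analytic_stable_maps}, \cref{def:k-analytic_stack} then gives compactness.

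\emph{Properness.} Assume in addition that $X$ is proper and that $\tilde k$ has characteristic zero. I would first observe that the SNC model $\fX$ is itself proper over $\kc$: since $X=\fXe$ is proper, i.e.\ compact and boundaryless, the special fiber $\fXs$ is proper over $\tilde k$ (cf.\ \cite{Temkin_Local_2000,Lutkebohmert_Formal_1990}). Then \cref{cor:properness_of_formal_moduli} makes the structure morphism $\ff\colon\bcMgn(\fX,A)\to\Spf\kc$ a proper morphism of formal stacks, and I would finish by applying \cref{thm:properness_of_analytic_stacks}. Its hypothesis that $\Delta_\ff$ be unramified is exactly where characteristic zero of $\tilde k$ enters: there the stability condition forbids infinitesimal automorphisms of stable maps into $\fXs$, so $\bcMgn(\fXs,A)\simeq\bcMgn(\fX,A)_s$ is a \DM stack and hence has unramified diagonal over $\tilde k$; a Nakayama argument lifts this to unramifiedness of $\Delta_\ff$ over $\kc$. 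Then \cref{thm:properness_of_analytic_stacks} gives that $\ff_\eta\colon\bcMgn(\fX,A)_\eta\to\SpB k$ is a proper morphism of $k$-analytic stacks, and the identification $\bcMgn(\fX,A)_\eta\simeq\bcMgn(X,A)$ completes the argument.

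The step most likely to need genuine care, rather than pure bookkeeping, is the passage to the formal model: deducing properness of $\fX$ from properness of $X$ relies on the nontrivial comparison between boundarylessness of a Berkovich generic fiber and properness of the associated special fiber. The only other subtlety is checking the unramified-diagonal hypothesis of \cref{thm:properness_of_analytic_stacks}, which truly requires $\tilde k$ of characteristic zero. Neither is an obstacle in the usual sense: the substantive work — constructing $\bcMgn(\fX,A)$, the generic-fiber comparison of \cref{thm:moduli_of_analytic_stable_maps}, and the decomposition-and-gluing proof of \cref{prop:properness_of_special_fiber} — is already done.
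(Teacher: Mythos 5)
Your proposal is correct and follows essentially the same route as the paper: compactness from \cref{cor:properness_of_formal_moduli} together with the identification $\bcMgn(\fX,A)_\eta\simeq\bcMgn(X,A)$ of \cref{thm:moduli_of_analytic_stable_maps}, and properness via properness of $\fX$ (Temkin), \cref{cor:properness_of_formal_moduli}, and \cref{thm:properness_of_analytic_stacks} with the unramified-diagonal hypothesis supplied by characteristic zero. The paper phrases the last point directly as the absence of infinitesimal automorphisms of formal stable maps, while you check it on the special fiber and lift by Nakayama — a harmless variation of the same argument.
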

\begin{proof}
The first part of the theorem follows from \cref{thm:moduli_of_analytic_stable_maps} and the first part of \cref{cor:properness_of_formal_moduli}.
For the second part of the theorem, we note that the SNC formal model $\fX$ is proper if and only if the $k$-analytic space $X$ is proper (cf.\ \cite{Temkin_Local_2000} Corollary 4.4).
So by \cref{cor:properness_of_formal_moduli}, the moduli stack $\bcMgn(\fX,A)$ is a proper formal stack finitely presented over $k^\circ$.
Since the residue field $\tilde k$ is assumed to have characteristic zero,
an \gn formal stable map into $\fX$ has no infinitesimal automorphisms.
Therefore, the diagonal of the formal stack $\bcMgn(\fX,A)$ is unramified.
Thus the second part of the theorem follows from \cref{thm:properness_of_analytic_stacks} and \cref{thm:moduli_of_analytic_stable_maps}.
\end{proof}
\bibliographystyle{plain}
\bibliography{dahema}

\end{document}